%
%     AmsLatex
%     Revised 15-07-1997
%     typeset twice to get the correct reference.
%
%
%\documentclass[11pt,a4paper]{amsart}
\documentclass[11pt]{amsart}
\usepackage{amssymb,amsmath}

\theoremstyle{plain}
\newtheorem{thrm}{Theorem}[section]
\newtheorem{lemma}[thrm]{Lemma}
\newtheorem{prop}[thrm]{Proposition}
\newtheorem{cor}[thrm]{Corollary}
\newtheorem{rmrk}[thrm]{Remark}
\newtheorem{dfn}[thrm]{Definition}

\numberwithin{equation}{section}

\setlength{\topmargin}{-0.2in} \setlength{\oddsidemargin}{0.3in}
\setlength{\evensidemargin}{0.3in} \setlength{\textwidth}{6.3in}
\setlength{\rightmargin}{0.7in} \setlength{\leftmargin}{-0.5in}
\setlength{\textheight}{9.1in}

\begin{document}
% begin top matter
% ********************* macroes needed for this paper ************************
\newcommand{\SL}{\mathcal L^{1,p}( D)}
\newcommand{\Lp}{L^p( Dega)}
\newcommand{\CO}{C^\infty_0( \Omega)}
\newcommand{\Rn}{\mathbb R^n}
\newcommand{\Rm}{\mathbb R^m}
\newcommand{\R}{\mathbb R}
\newcommand{\Om}{\Omega}
\newcommand{\Hn}{\mathbb H^n}
\newcommand{\N}{\mathbb N}
\newcommand{\aB}{\alpha B}
\newcommand{\eps}{\epsilon}
\newcommand{\BVX}{BV_X(\Omega)}
\newcommand{\p}{\partial}
\newcommand{\IO}{\int_\Omega}
\newcommand{\bG}{\mathbb{G}}
\newcommand{\bg}{\mathfrak g}
\newcommand{\Bux}{\mbox{Box}}
\newcommand{\al}{\alpha}
\newcommand{\til}{\tilde}
\newcommand{\nuX}{\boldsymbol{\nu}^X}
\newcommand{\bN}{\boldsymbol{N}}
\newcommand{\nh}{\nabla_H}
\newcommand{\deh}{\Delta_H}
\newcommand{\rh}{|\nabla_H \rho|^2}
\newcommand{\uh}{|\nabla_H u|^2}
\newcommand{\Gp}{G_{D,p}}
\newcommand{\n}{\boldsymbol \nu}
\newcommand{\ve}{\varepsilon}
\newcommand{\dsh}{|\nabla_H \rho| d\sigma_H}
\newcommand{\la}{\lambda}
\newcommand{\vf}{\varphi}
\newcommand{\rhh}{|\nabla_H \rho|}
\newcommand{\Ba}{\mathcal{B}_\alpha}
\newcommand{\Za}{Z_\alpha}
\newcommand{\ra}{\rho_\alpha}
\newcommand{\na}{\nabla_\alpha}
\newcommand{\vt}{\vartheta}

% ****************************************************************************

\title[Frequency of Harmonic functions in Carnot groups, etc.]
{Frequency of Harmonic functions in Carnot groups and for operators of Baouendi type}

\dedicatory{Dedicated to Ermanno Lanconelli, on the occasion of his birthday}

\author{Nicola Garofalo}
\address{Dipartimento di Ingegneria Civile, Edile e Ambientale (DICEA) \\ Universit\`a di Padova\\ 35131 Padova, ITALY}
 
 \address{Department of Mathematics\\
Purdue University \\
West Lafayette IN 47907-1968}

\email[Nicola
Garofalo]{rembrandt54@gmail.com}

\thanks{First author supported in part by NSF Grant DMS-1001317}

\author{Kevin Rotz}
\address{Department of Mathematics \\
Purdue University \\
West Lafayette IN 47907-1968} \email[Kevin Rotz]{kevin.rotz@gmail.com}
%\thanks{Second author supported in part by the first author's NSF Grant DMS-}
%

\date{\today}

%
% AMS information
%
\keywords{Almgren's frequency function, Carnot groups, harmonic functions, monotonicity formulas, Baouendi operators, strong unique continuation property} \subjclass{22E30, 35B45, 35B60, 35H05}

\maketitle
% end top matter

\tableofcontents

\begin{abstract}
We introduce a new notion of \emph{Almgren's frequency} which is adapted to solutions of a sub-Laplacian (harmonic functions) on a Carnot group of arbitrary step $\bG$. With this notion we investigate some new functionals associated with the frequency, and obtain  monotonicity formulas for the relevant harmonic functions, or for the solutions of a closely connected class of degenerate second order operators of Baouendi type, see \eqref{Ba} below. The results proved in this paper provide some new insight into the deep link existing between the growth properties of the frequency, and the local and global structure of the relevant harmonic functions in these non-elliptic, or \emph{subelliptic}, settings. 

\end{abstract}

\section{Introduction and statement of the results}\label{S:intro}

The celebrated \emph{frequency function} of Almgren, and its monotonicity properties, play a fundamental role in several questions in partial differential equations and geometric measure theory. We recall that in \cite{A} Almgren proved that if $u$ is a harmonic function in, say, the closed unit ball $\overline B_1 \subset \Rn$, then its frequency
\[
r \to N(u,r) = \frac{r \int_{B_r} |\nabla u|^2}{\int_{S_r} u^2},\ \ \ \ \ \ 0<r<1,
\]
where $S_r = \p B_r$, is non-decreasing on the interval $(0,1)$. In particular, $N(u,\cdot)\in L^\infty(0,1)$, with $||N(u,\cdot)||_{L^\infty(0,1)} = N(u,1)$. This latter fact implies the following doubling inequality
\[
\int_{B_{2r}} u^2 \le C(n,||u||_{W^{1,2}(B_1)}) \int_{B_{r}} u^2,\ \ \ \ 0<r<1/2,
\]
which, in turn, proves the strong unique continuation property of harmonic functions. For this aspect the reader should see  the papers \cite{GLin1}, \cite{GLin2}, by F.H. Lin and the first named author. In those papers Almgren's monotonicity was generalized to elliptic operators in divergence form with Lipschitz continuous coefficients, and the unique continuation property was proved.

The recent work of Caffarelli, Salsa and Silvestre \cite{CSS}  has brought to light a fundamental new role of Almgren's monotonicity of the frequency in the study of free boundaries for lower-dimensional obstacle problems. The subsequent work of Petrosyan and the first named author \cite{GP} on the analysis of the singular points in the lower-dimensional obstacle problem has underscored a deep connection between the monotonicity of the frequency and two one-parameter families of new monotonicity formulas inspired to the single ones, originally discovered by G. Weiss \cite{W}, and R. Monneau \cite{M}, for the classical obstacle problem. 
One remarkable new aspect which was brought to light in \cite{CSS} is the connection between free boundary problems for the fractional Laplacian and monotonicity formulas of Almgren type for some degenerate elliptic operators of subelliptic type which belong to a class originally introduced by Baouendi in \cite{B}. 

It was this connection that led us to introduce, in the present paper, a new notion of \emph{Almgren's frequency} which is adapted to solutions of a sub-Laplacian on a Carnot group of arbitrary step $\bG$, which we call \emph{harmonic functions} hereafter. With this notion we have investigated some interesting new functionals associated with the frequency, and obtained  monotonicity formulas for the relevant harmonic functions, or for the solutions of a closely connected class of degenerate second order operators of Baouendi type, see \eqref{Ba} below. The results proved in this paper provide some new insight into the deep link existing between the growth properties of the frequency, and the local and global structure of the relevant harmonic functions in these non-elliptic, or \emph{subelliptic}, settings. 

To put our results in the proper historical perspective, we recall that in \cite{GL} E. Lanconelli and the first named author studied the problem of unique continuation on the Heisenberg group $\Hn$. Motivated by the cited works \cite{GLin1}, \cite{GLin2}, in \cite{GL} the authors introduced an analogue of  Almgren's frequency function which we now describe. Henceforth, given a function $u\in C^1(\Hn)$ we will indicate with 
$\uh$ the square of the length of its horizontal gradient, see \eqref{slhg} below. Denoting with $\rho(g) = (|z|^4 + 16 t^2)^{1/4}$ the anisotropic Koranyi gauge on $\Hn$, we let $B_r = \{g\in \Hn\mid \rho(g)<r\}$, and denote by $S_r = \p B_r$ its boundary. We define the \emph{Dirichlet integral} of a function $u$ in $B_r$  as follows
\begin{equation}\label{DH}
D(r) = \int_{B_r} \uh dg,
\end{equation}
where $\uh$ denotes the degenerate energy associated with the so-called horizontal gradient of $u$, see \eqref{slhg} below.
The \emph{height} of a function $u$ on $B_r$ is defined as
\begin{equation}\label{HH}
H(r) = \int_{S_r} u^2 \dsh,
\end{equation}
where we have denoted by $d\sigma_H$ the restriction to $S_r$ of the $H$-perimeter measure \`a la De Giorgi, see for instance \cite{DGN07}. The original definition of $H(r)$ in \cite{GL} was formulated in a different way, but we are casting it in the form \eqref{HH} since this is more in accordance with our general Definition \ref{D:DH} below. 

Given a harmonic function $u$, i.e., a solution of the equation  $\Delta_H u = 0$, where $\Delta_H$ indicates the Kohn sub-Laplacian on $\Hn$, the \emph{frequency} of $u$ was defined in \cite{GL} as
\begin{equation}\label{fH}
N(r) = N(u;r) = \frac{r D(r)}{H(r)}.
\end{equation}
Notice that, similarly to Almgren's frequency function, the function $N(u;r)$ is scale invariant, in the sense that
\begin{equation}\label{si}
N(\delta_r u;1) = N(u;r),
\end{equation}
where $\delta_r(z,t) = (rz,r^2 t)$ indicates the non-isotropic dilations in $\Hn$, see also the general Proposition \ref{P:si} below. In \cite{GL} the authors proved that the frequency function \eqref{fH} of a class of solutions to appropriate perturbations of the Kohn sub-Laplacian on $\Hn$ is locally bounded.

In this paper we propose a generalization of the functionals  \eqref{DH}, \eqref{HH} above for harmonic functions on any stratified nilpotent Lie group $\bG$ (also known as a \emph{Carnot group}). With such functionals we then form the relevant frequency function, which generalizes \eqref{fH}, and we study the connection between its properties and the local and global structure of the harmonic functions on $\bG$. In such general framework we needed an \emph{ad hoc} replacement for the the gauge balls in \eqref{DH}, \eqref{HH}. Motivated by the representation formulas in \cite{CGL}, we have chosen to work with the level sets of the fundamental solution of a sub-Laplacian $\Delta_H$ on $\bG$. Such fundamental solution was  constructed by Folland in \cite{F2}, see \eqref{balls}, \eqref{br} in Section \ref{S:prelim} and Definition \ref{D:DH}
 in Section \ref{S:DHN} below for the relevant definitions. 
 
 Remarkably, although the fundamental solution of a sub-Laplacian in a Carnot group is not, in general, explicitly known, we have nonetheless been able to derive, with the correct scalings, the basic first variation formulas for the relevant functionals entering in the definition of the frequency.  Proposition \ref{P:beauty} below is a first example. Such result allows to connect the Dirichlet integral $D(r)$ of a harmonic function $u$, to a surface integral which involves the infinitesimal generator of the non-isotropic group dilations, see \eqref{dilG}
 below. This immediately implies our Proposition \ref{P:hom} below stating that, similarly to what happens for a classical solution of $\Delta u = 0$ in $\Rn$, in a Carnot group $\bG$ the frequency of a harmonic function homogeneous of degree $\kappa$  is constant, and equal to $\kappa$. Here, the notion of homogeneity is tailored on the group dilations.

In Section \ref{S:fv} we connect the local boundedness of the frequency  to a deep property of harmonic functions, namely the \emph{doubling condition}, and the closely connected \emph{strong unique continuation property} (sucp). Unlike what happens   for classical harmonic functions in $\Rn$, harmonic functions on a Carnot group $\bG$ are not real-analytic in general, see the discussion at the end of Section \ref{S:fv}. Therefore, the analysis of the uniqueness properties of sub-Laplacians becomes an extremely delicate question which, regrettably, is still to present day largely not understood. An initial very interesting study of what can go wrong for smooth, even compactly supported, perturbations of sub-Laplacians was done by H. Bahouri in \cite{Ba}. However, Bahouri's work does not provide any evidence, in favor or to the contrary, about the fundamental, and largely open, question of whether \emph{any sub-Laplacian in a Carnot group possesses the sucp}. Our Theorem  \ref{T:Nb} below shows that a quantitative version of the sucp (the so-called \emph{doubling condition}) is intimately connected to the local boundedness of the frequency. We prove that these two properties are in fact equivalent. It is our hope to be able to address the general question of the local boundedness of the frequency in a future study.

In Section \ref{S:global} we prove that any harmonic function in a Carnot group which has globally bounded frequency $N(u,\cdot)$ must be a stratified polynomial of degree less than or equal the integral part of $||N(u,\cdot)||_{L^\infty(0,\infty)}$, see Theorem \ref{T:bf} below. This result was recently established for the above described frequency function introduced in \cite{GL} by H. Liu, L. Tian  X. Yang in the special setting of the Heisenberg group $\Hn$. Our result generalizes Theorem 1.1 in \cite{LTY} to all Carnot groups.  

In Section \ref{S:energy} we compute the first variation of the energy $D(r)$ of a harmonic function, see Proposition \ref{P:D'} below, and introduce the notion of \emph{discrepancy} of a function at a point, see Definition \ref{D:discrep} below. In Theorem \ref{T:mono} we show that if $u$ is a harmonic function with vanishing discrepancy, then its frequency is monotone nondecreasing. Furthermore, we prove in Proposition \ref{P:monhom} that if such a function has constant frequency equal to $\kappa$, then it must be homogeneous of degree $\kappa$. Combined with Proposition \ref{P:hom}, the latter result shows that, for a harmonic function $u$ having vanishing discrepancy, the frequency is constant and equal to $\kappa$ is and only if $u$ is homogeneous of degree $\kappa$. 

A fundamental open question is whether a harmonic function in a Carnot group possesses locally bounded frequency, or equivalently, whether harmonic functions have the strong unique continuation property. As we have shown in Theorem \ref{T:Nb} and Theorem \ref{T:sucp}, the boundedness of the frequency would suffice to establish the sucp. 
Although it is very tempting to conjecture that in a Carnot group $\bG$ the frequency of a harmonic function  is always locally bounded, presently we do not know whether this fundamental property is true.  In Section \ref{S:2theo} we prove two results that provide some interesting evidence in favor of this conjecture. The former, Theorem \ref{T:agnid}, states that in every group of Metivier type (and therefore, in particular, in any group of Heisenberg type) the frequency of a harmonic function is in fact locally bounded. For the Heisenberg group $\Hn$ the proof of this result was suggested to us by Agnid Banerjee. Here, we reproduce with his kind permission a generalization of his idea.
Theorem \ref{T:GLgen} provides an interesting sufficient condition, formulated on the discrepancy, for a harmonic function to have bounded frequency. It constitutes a generalization of a result which, in the special setting of the Heisenberg group $\Hn$, was proved in \cite{GL}.

In Section \ref{S:WM} we establish an interesting new monotonicity formula for the functional $\mathcal W_\kappa(u,r)$ defined in \eqref{W} below. The main thrust of this formula is that the derivative of $\mathcal W_\kappa(u,r)$ vanishes if and only if $u$ is homogenoeus of degree $\kappa$. Our main result is Theorem \ref{T:weissG} below which was inspired to a result that, for the classical Laplacian, was discovered  by Petrosyan and the first named author in \cite{GP}.
Using such result we establish Theorem \ref{T:N=k} below, which states that if a harmonic function $u$ in $\bG$ has vanishing discrepancy and constant frequency equal to $\kappa$, then $u$ must  be
a stratified solid harmonic of degree $\kappa$.

Section \ref{S:examples} is devoted to a further analysis of the discrepancy in the setting of groups of Heisenberg type. The characterization of the discrepancy provided by Lemma \ref{L:innerp} allows us to prove in Proposition \ref{P:cvd} that in a group of Heisenberg type a harmonic function has vanishing discrepancy if and only if in the exponential coordinates it is a solution of the Baouendi operator $\Ba$ in \eqref{Ba} below with $\alpha = 1$.

In Section \ref{S:bg} we finally turn to monotonicity properties of the \emph{Baouendi type operators} \begin{equation}\label{Ba}
\Ba u = \Delta_z u + \frac{|z|^{2\alpha}}{4} \Delta_t u,\ \ \ \ \ \ \alpha>0.
\end{equation} 
Here, for given $m, k\in \mathbb N$, we have let $z\in \R^m$ and $t\in \R^k$, and henceforth we will indicate $\R^N = \R^m \times \R^k$. The operator $\Ba$ is uniformly elliptic away from the $k$-dimensional manifold $\mathcal M = \{0\}\times \R^k \subset \R^N$, but its ellipticity degenerates as one approaches $\mathcal M$. When $\alpha = 2 \ell$, with $\ell\in \mathbb N$, then $\Ba$ is an operator of H\"ormander type, and therefore by the results in \cite{H} it is hypoelliptic. The operators \eqref{Ba} belong to a class of operators first studied by S. Baouendi in \cite{B}, and later by V. Grushin in \cite{Gr1} and \cite{Gr2}. A deep study of the local properties of solutions of equations modeled on \eqref{Ba} was conducted in
the pioneering work of Franchi and Lanconelli \cite{FL}, see also the subsequent work of Franchi and Serapioni \cite{FS}, and the references therein. 

The case $\alpha = 1$ in \eqref{Ba} has a special significance since in such case the resulting operator is connected with the real part of the Kohn-Spencer sub-Laplacian $\Delta_H$ on the Heisenberg group $\Hn$. In the real coordinates of $\R^{2n+1}$ the latter operator is given by
\begin{equation}\label{slHn}
\Delta_H = \Delta_z + \frac{|z|^2}{4} \p_{tt} + \p_t \sum_{j=1}^n \left(x_j \p_{y_j} - y_j \p_{x_j}\right).
\end{equation}
If we let $\Theta = \sum_{j=1}^n \left(x_j \p_{y_j} - y_j \p_{x_j}\right)$, then it is clear that a harmonic function in $\Hn$, i.e., a solution of $\Delta_H u = 0$, for which $\Theta u = 0$, is also a solution of $\mathcal B_1 u = 0$. We note that a function $u$ in $\Hn$ has $\Theta u = 0$ if and only if $u(e^{i\vt}z,t) = u(z,t)$ for every $\vt\in \R$, and for every $(z,t) \in \mathbb C^{n}\times \R$. When in particular $n=1$, then $\Theta u = 0$ if and only if $u(z,t) = u^\star(|z|,t)$.

We recall that for solutions of the operators \eqref{Ba} a generalization of Almgren's monotonicity formula was proved in \cite{G}. In Section \ref{S:bg} we use the results in \cite{G} to establish some new monotonicity results. The former, Theorem \ref{T:weissB} below, is the counterpart of Theorem \ref{T:weissG}, except that, interestingly, for the solutions of the Baouendi operators $\Ba$ we do not have the additional hypothesis of vanishing discrepancy. Our second main result is Theorem \ref{T:MBa} below, which is inspired to a monotonicity formula for the classical Laplacian in \cite{GP}.

\medskip

\noindent \textbf{Acknowledgement:} The work of Ermanno Lanconelli has been a constant source of inspiration. One of us, N.G., has not only benefited from such inspiration, but also shared with Ermanno a long friendship, countless hours of mathematical collaborations and discussions. On the occasion of his birthday, we dedicate this paper to him with affection. 

We also would like to thank Donatella Danielli for several helpful conversations and suggestions, and Agnid Banerjee for having pointed to our attention the second part of Theorem \ref{T:Nb}. The idea of the proof of Theorem \ref{T:agnid} below is also his, and we present it in this paper with his kind permission.

\section{Preliminaries}\label{S:prelim}

We recall that a Carnot group of step $r$ is a connected, simply connected Lie
group $\bG$ whose Lie algebra $\bg$ admits a stratification $\bg=
V_1 \oplus \cdots \oplus V_r$ which is $r$-nilpotent, i.e.,
$[V_1,V_j] = V_{j+1},$ $j = 1,...,r-1$, $[V_j,V_r] = \{0\}$, $j =
1,..., r$. We assume henceforth that $\mathfrak g$ is endowed with a
scalar product $<\cdot,\cdot>_\bg$ with respect to which the $V_j's$
are mutually orthogonal. A trivial example of (an Abelian) Carnot
group is $\bG = \Rn$, whose Lie algebra admits the trivial
stratification $\bg = V_1 = \Rn$. The simplest non-Abelian example
of a Carnot group of step $r=2$ is the above mentioned $(2n+1)$-dimensional
Heisenberg group $\Hn$.
Given a Carnot group $\bG$, by the above assumptions on the Lie
algebra one immediately sees that any basis of the \emph{horizontal
layer} $V_1$ generates the whole $\bg$. We will respectively denote
by
\begin{equation}\label{LT}
L_g(g')\ =\ g \ g'\ ,\quad\quad\quad\quad R_g(g')\ =\ g' \ g\ ,
\end{equation}
the operators of left- and right-translation by an element $g\in
\bG$.

The exponential mapping $\exp : \bg \to \bG$ defines an analytic
diffeomorphism onto $\bG$. We recall the 
Baker-Campbell-Hausdorff formula, see, e.g., sec. 2.15 in \cite{V},
\begin{equation}\label{BCH}
\exp(\xi)  \exp(\eta) = \exp{\bigg(\xi + \eta + \frac{1}{2}
[\xi,\eta] + \frac{1}{12} \big\{[\xi,[\xi,\eta]] -
[\eta,[\xi,\eta]]\big\} + ...\bigg)}\ ,
\end{equation}
where the dots indicate commutators of order four and higher. Each
element of the layer $V_j$ is assigned the formal degree $j$.
Accordingly, one defines dilations on $\bg$ by the rule
\[
\Delta_\lambda \xi = \lambda \xi_1 + ... + \lambda^r
\xi_r,
\]
provided that $\xi = \xi_1 + ... + \xi_r \in \bg$, with $\xi_j\in
V_j$. Using the exponential mapping $\exp : \bg \to \bG$, these
anisotropic dilations are then tansferred to the group $\bG$ as
follows
\begin{equation}\label{dilG}
\delta_\lambda(g) = \exp \circ \Delta_\lambda \circ
\exp^{-1} g.
\end{equation}
Throughout the paper we will indicate by $dg$ the bi-invariant
Haar measure on $\bG$ obtained by lifting via the exponential map
$\exp$ the Lebesgue measure on $\bg$. We let $m_j =$ dim$ V_j$, $j=
1,...,r$, and denote by $N = m_1 + ... + m_r$ the topological
dimension of $\bG$. For ease of notation, we agree from now on to indicate with $m$, instead of $m_1$, the dimension of the horizontal layer $V_1$ of $\bg$. One easily checks that
\begin{equation}\label{cim}
(d\circ\delta_\lambda)(g) = \lambda^Q dg ,
\quad\quad\text{where}\quad Q = \sum_{j=1}^r j m_j.
\end{equation}
The number $Q$, called the \emph{homogeneous dimension} of $\bG$,
plays an important role in the analysis of Carnot groups. In the
non-Abelian case $r>1$, one clearly has $Q>N$.

Let $\{e_1,...,e_m\}$ indicate an orthonormal basis of the first layer $V_1$ of the Lie algebra, and define the left-invariant $C^\infty$ vector fields on $\bG$ by the formula
\[
X_i(g) = dL_g(e_i),\ \ \ \ i=1,...,m,
\]
where $dL_g$ indicates the differential of $L_g$. We assume throughout this paper that $\bG$ is endowed with a left-invariant Riemannian metric with respect to which the vector fields $\{X_1,...,X_m\}$ are orthonormal. The sub-Laplacian associated with the basis $\{e_1,...,e_m\}$ is defined by the formula
\begin{equation}\label{slG}
\Delta_H u = \sum_{i=1}^m X_i^2 u.
\end{equation}
A distribution $u$ is called harmonic in $\bG$ if $\deh u = 0$ in $\mathcal D'(\bG)$. Since the vector fields $X_1,...,X_m$ and their commutators up to step $r$ generate the whole Lie algebra of left-invariant vector fields on $\bG$, thanks to H\"ormander's theorem, see \cite{H}, $\deh$ is hypoelliptic (but not, in general, real analytic hypoelliptic). Thus, a harmonic distribution $u$ can be modified on a set of measure zero so that it coincides with a $C^\infty$ harmonic function. Henceforth, we will indicate with $e\in \bG$ the group identity.

Let now $\Gamma(g,g') = \Gamma(g',g)$ be a positive
fundamental solution of $-\Delta_H$. Such distribution is left-translation invariant, i.e., we can write
\[
\Gamma(g,g') = \tilde \Gamma(g^{-1}\circ g').
\]
For every $r>0$, let 
\begin{equation}\label{balls}
B_r = \left\{g\in \bG \mid \Gamma(g,e)>\frac{1}{r^{Q-2}}\right\}.
\end{equation}
It was proved by Folland in \cite{F2} that the distribution $\tilde \Gamma(g)$ is homogeneous of degree $2-Q$ with respect to the non-isotropic dilations \eqref{dilG}. This implies that, if we define
\begin{equation}\label{rho}
\rho(g) = \Gamma(g)^{- 1/(Q-2)},
\end{equation}
then the function $\rho$ is homogeneous of degree one. We obviously have from \eqref{balls}
\begin{equation}\label{br}
B_r = \{g\in \bG\mid \rho(g)<r\}.
\end{equation}
Henceforth, we will use the notation $S_r = \p B_r$.

The position
\begin{equation}\label{pd}
d(g,g') = \rho(g^{-1}\circ g'),
\end{equation}
defines a pseudo-distance on $\bG$. Using such pseudo-distance, we set
\[
B_r(g) = \{g'\in \bG\mid d(g',g)<r\},\ \ \ \ S_r(g) = \p B_r(g).
\]
We observe that if $|\cdot|$ denotes a non-isotropic gauge on $\bG$, then there exist $0<\beta<\alpha<\infty$, depending only on $\bG$, such that
\begin{equation}\label{gaugecomp}
\alpha^{-1} |g| \le \rho(g) \le \beta^{-1} |g|,\ \ \ \ \ g\in \bG.
\end{equation}
As a consequence, with obvious meaning of the notations, we have
\begin{equation}\label{alphabeta}
B^{|\cdot|}_{\beta r} \subset B_r \subset B^{|\cdot|}_{\alpha r}.
\end{equation}

A function $u$ on a Carnot group $\bG$ is called \emph{homogeneous of degree} $\kappa$ with respect to the non-isotropic group dilations  $\{\delta_\la\}_{\la>0}$ in \eqref{dilG} if
\begin{equation}\label{homG}
u(\delta_\lambda g) = \lambda^\kappa u(g),\ \ \ \ \ g\in \bG,\ \la>0.
\end{equation}   
Throughout this paper we will indicate with $Z$ the infinitesimal generator of the non-isotropic dilations \eqref{dilG}. Notice that such $C^\infty$ vector field is characterized by the property 
\begin{equation}\label{dZ}
 \frac{d}{dr} u(\delta_r g) = \frac{1}{r} Zu(\delta_r g).
 \end{equation}

One can show that a function $u\in C^1(\bG)$ is homogeneous of degree $\kappa$ if and only if the following Euler formula holds
\begin{equation}\label{E0}
Zu = \kappa u.
\end{equation}

We close this section by noting the following fact about the function $\rho$ defined by \eqref{rho}.

\begin{prop}\label{P:radial}
Let $f:(0,\infty)\to \R$ be a $C^2$ function, and define $u(g) = f(\rho(g))$. Then, one has
\[
\Delta_H u = |\nh \rho|^2 \left\{f''(\rho) + \frac{Q-1}{\rho} f'(\rho)\right\},\ \ \ \ \ \text{in}\ \bG\setminus \{e\}.
\]
\end{prop}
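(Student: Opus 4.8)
The plan is to reduce the whole statement to a single formula for the sub-Laplacian of $\rho$ itself, and then to extract that formula from the harmonicity of the fundamental solution. Since $\rho = \Gamma^{-1/(Q-2)}$ with $\Gamma = \tilde\Gamma(g)$ smooth and strictly positive on $\bG\setminus\{e\}$ (smoothness being a consequence of the hypoellipticity of $\deh$ together with $\deh\Gamma = 0$ off the pole), the function $\rho$ is $C^\infty$ away from the identity, so all the manipulations below are legitimate there.

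First I would exploit that each $X_i$ is a first-order derivation, so that both the chain rule and the Leibniz rule apply. Writing $u = f(\rho)$ we obtain $X_i u = f'(\rho)\, X_i\rho$, and therefore
\[
X_i^2 u = f''(\rho)\,(X_i\rho)^2 + f'(\rho)\, X_i^2\rho.
\]
Summing over $i = 1,\dots,m$ and recalling that $|\nh\rho|^2 = \sum_{i=1}^m (X_i\rho)^2$ while $\deh\rho = \sum_{i=1}^m X_i^2\rho$, this gives the intermediate identity
\[
\deh u = f''(\rho)\,|\nh\rho|^2 + f'(\rho)\,\deh\rho.
\]
In view of this, the entire proposition reduces to showing that $\deh\rho = \frac{Q-1}{\rho}\,|\nh\rho|^2$ on $\bG\setminus\{e\}$.

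To establish this last identity I would apply the intermediate identity to the specific choice $f(s) = s^{2-Q}$, for which $f(\rho) = \rho^{2-Q} = \Gamma$. Because $\Gamma$ is a fundamental solution of $-\deh$, it satisfies $\deh\Gamma = 0$ on $\bG\setminus\{e\}$. Using $f'(s) = (2-Q)s^{1-Q}$ and $f''(s) = (2-Q)(1-Q)s^{-Q}$ and substituting, one finds
\[
0 = (2-Q)(1-Q)\,\rho^{-Q}\,|\nh\rho|^2 + (2-Q)\,\rho^{1-Q}\,\deh\rho .
\]
Dividing through by the nonvanishing factor $(2-Q)\rho^{1-Q}$ — legitimate since $Q>2$, which is implicit in the very form \eqref{rho} of $\rho$ — and solving for $\deh\rho$ yields exactly $\deh\rho = \frac{Q-1}{\rho}|\nh\rho|^2$. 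Inserting this into the intermediate identity gives $\deh u = |\nh\rho|^2\{f''(\rho) + \frac{Q-1}{\rho}f'(\rho)\}$, as claimed.

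I expect no serious obstacle here: the argument is purely algebraic once one recognizes that the model radial profile $s^{2-Q}$ reproduces the fundamental solution $\Gamma$, whose harmonicity off the pole is precisely its defining property. The only points deserving care are the $C^\infty$ regularity of $\rho$ on $\bG\setminus\{e\}$, needed to justify the chain and product rules, and the hypothesis $Q>2$, which guarantees both that $\rho^{2-Q}$ is the correct power-type fundamental solution and that the division above is permissible.
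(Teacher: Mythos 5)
Your proof is correct and follows essentially the approach the paper intends: its proof is the one-line remark that the result is "an elementary computation based on \eqref{rho} and the chain rule," and your argument is exactly that computation carried out in full, using the chain rule together with $\Gamma=\rho^{2-Q}$ and $\Delta_H\Gamma=0$ off the pole to identify $\Delta_H\rho=\frac{Q-1}{\rho}|\nabla_H\rho|^2$. The attention you pay to the smoothness of $\rho$ on $\bG\setminus\{e\}$ and to $Q>2$ is appropriate and fills in precisely the details the paper leaves to the reader.
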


\begin{proof}
It is an elementary computation based on \eqref{rho} and the chain rule. The details are left to the reader.

\end{proof}

\section{Energy, height and frequency of harmonic functions}\label{S:DHN}

In this section we introduce the Dirichlet integral, the height and the frequency of a function on a Carnot group $\bG$, and establish some first basic properties of these functionals. We begin with the relevant definition. In what follows, for a given function $u$ on $\bG$ we indicate with $\nh u$ the horizontal gradient of $u$ given by
\[
\nh u = \sum_{j=1} X_j u\ X_j.
\]
We also let
\begin{equation}\label{slhg}
|\nabla_H u|^2 = \sum_{j=1}^m (X_ju)^2.
\end{equation}

\begin{dfn}\label{D:DH}
Given a function $u$ in a ball $B_R(g_0)\subset \bG$, for every $0<r<R$ we define its \emph{Dirichlet integral} 
\begin{equation}\label{D}
D_{g_0}(u,r) = \int_{B_r(g_0)} |\nh u|^2 dg.
\end{equation}
The \emph{height} function of $u$, see \eqref{HH} above, is defined as
\begin{equation}\label{H}
H_{g_0}(r) = \int_{S_r(g_0)} u^2 |\nabla_Hd(\cdot,g_0)| d\sigma_H,
\end{equation}
where we have denoted by $d\sigma_H$ the $H$-perimeter measure on $S_r(g_0)$, and with $d(g,g_0)$ the pseudo-distance \eqref{pd}. The \emph{frequency} of $u$ (with respect to $g_0\in \bG$) is defined as 
\begin{equation}\label{N}
N_{g_0}(u;r) = \frac{r D_{g_0}(u,r)}{H_{g_0}(u,r)},\ \ \ \ \ 0<r<R.
\end{equation}
When $g_0 = e$, the group identity, then we simply write $D(u,r), H(u,r)$ and $N(u,r)$. In such case, the kernel $|\nabla_Hd(\cdot,g_0)|$ will be simply indicated with $|\nh \rho|$.
\end{dfn}

By the left-translation invariance of the Haar measure on $\bG$ (see \cite{CG}) and of the $H$-perimeter measure (see \cite{forum}) we immediately see that the frequency is invariant with respect to left-translations, i.e.,
\begin{equation}\label{Ninv}
N_{g_0}(u,r) = N(u\circ L_{g_0},r).
\end{equation}
 
\begin{rmrk}\label{R:u}
In view of \eqref{Ninv} we can focus our analysis of the frequency on balls which are centered at the group identity. Also, when the function $u$ is fixed throughout the discussion, we will simply write $D(r), H(r)$ and $N(r)$, instead of $D(u,r), H(u,r)$ and $N(u,r)$.
\end{rmrk}

A first important property of the frequency is represented by the following scale invariance.

\begin{prop}\label{P:si}
Let $\Delta_H u = 0$ in $B_R$, then for every $0<\la <R/r$ one has
\[
N(u\circ \delta_{\la},r) = N(u,\la r).
\]
\end{prop}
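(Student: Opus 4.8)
The identity is at heart a scaling computation, and the harmonicity of $u$ enters only to guarantee that the denominator $H(u,\lambda r)$ is nonzero, so that the frequency is well defined. The plan is to track separately how the Dirichlet integral $D$ and the height $H$ transform under composition with $\delta_\lambda$, and then to take the quotient $N = rD/H$.

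First I would record the basic homogeneities of the objects involved. Since $\delta_\lambda$ is a group automorphism whose differential scales the horizontal layer $V_1$ by $\lambda$, the left-invariant fields satisfy $X_i(u\circ\delta_\lambda)(g)=\lambda\,(X_iu)(\delta_\lambda g)$, and hence $|\nabla_H(u\circ\delta_\lambda)|^2=\lambda^2\,|\nabla_H u|^2\circ\delta_\lambda$. Because $\rho$ is homogeneous of degree one (see \eqref{rho} and the comment following it), one has $\rho\circ\delta_\lambda=\lambda\rho$, which gives $\delta_\lambda(B_r)=B_{\lambda r}$ and $\delta_\lambda(S_r)=S_{\lambda r}$; applying the same gradient scaling to $\rho$ itself shows $|\nabla_H\rho|\circ\delta_\lambda=|\nabla_H\rho|$, i.e.\ the kernel $|\nabla_H\rho|$ is homogeneous of degree zero.

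Next I would compute $D(u\circ\delta_\lambda,r)$ by changing variables $g'=\delta_\lambda g$ in $\int_{B_r}|\nabla_H(u\circ\delta_\lambda)|^2\,dg$, using the gradient scaling, the identity $\delta_\lambda(B_r)=B_{\lambda r}$, and $(d\circ\delta_\lambda)(g)=\lambda^Q dg$ from \eqref{cim}. This yields $D(u\circ\delta_\lambda,r)=\lambda^{2-Q}D(u,\lambda r)$. For the height I would perform the analogous change of variables on the level set $S_r$, the key input being that the De Giorgi $H$-perimeter measure $d\sigma_H$ is homogeneous of degree $Q-1$ under $\delta_\lambda$ (see \cite{forum}, \cite{DGN07}); combined with the degree-zero homogeneity of $|\nabla_H\rho|$, this gives $H(u\circ\delta_\lambda,r)=\lambda^{1-Q}H(u,\lambda r)$.

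Finally, forming the quotient, the powers of $\lambda$ combine as $\lambda^{2-Q}/\lambda^{1-Q}=\lambda$, so that $N(u\circ\delta_\lambda,r)=r\lambda\,D(u,\lambda r)/H(u,\lambda r)=N(u,\lambda r)$, as claimed. The one step requiring genuine care — and the main obstacle — is the transformation law for the surface integral defining $H$: one must justify the scaling of $d\sigma_H$ under the non-isotropic dilations and handle the change of variables on $S_r$ correctly, whereas the volume integral $D$ is immediate from \eqref{cim}.
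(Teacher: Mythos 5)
Your proposal is correct and follows essentially the same route as the paper: both proofs rest on the gradient scaling $|\nabla_H(u\circ\delta_\lambda)|^2=\lambda^2|\nabla_H u|^2\circ\delta_\lambda$, the homogeneities of $\rho$ (degree one) and $|\nabla_H\rho|$ (degree zero), the volume scaling \eqref{cim}, and the $H$-perimeter scaling \eqref{Hper}, giving $D(u\circ\delta_\lambda,r)=\lambda^{2-Q}D(u,\lambda r)$ and $H(u\circ\delta_\lambda,r)=\lambda^{1-Q}H(u,\lambda r)$ before taking the quotient. Your side remark that harmonicity enters only through the well-definedness of the quotient is also consistent with the paper, which handles that point separately in Lemma \ref{L:nondeg} and Remark \ref{R:nonvan}.
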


\begin{proof}
Hereafter, if $N$ indicates the topological dimension of the group $\bG$ we will denote by $H_{N-1}$ the standard $(N-1)-$dimensional Hausdorff measure on $\bG$. We begin by recalling, see \cite{DGN07}, that the $H$-perimeter measure on the boundary of the $C^\infty$ 
domain $B_r$ is given by
\begin{equation}\label{sigmaH}
d\sigma_H = |\bN_H| dH_{N-1},
\end{equation}
where, with $\nu$ being the (Riemannian) outer unit normal on $B_r$, one has
\begin{equation}\label{Nhv}
\bN_H = \sum_{i=1}^m <X_i,\nu>X_i = \frac{1}{|\nabla \rho|}\sum_{i=1}^m X_i\rho X_i =  \frac{1}{|\nabla \rho|} \nh \rho.
\end{equation}
We thus obtain on $S_r$
\begin{equation}\label{Nh}
|\bN_H| = \frac{\rhh}{|\nabla \rho|}.
\end{equation}
Recall that the functions $g\to \rho(g)$ and $g\to |\nh \rho(g)|$ are respectively homogeneous of degree one and zero with respect to \eqref{dilG}. We also recall, see for instance \cite{forum}, that the $H$-perimeter measure scales correctly with respect
to the nonisotropic dilations \eqref{dilG}, in the sense that
\begin{equation}\label{Hper}
d\sigma_H(\delta_\lambda(g)) = \lambda^{Q-1} d\sigma_H(g).
\end{equation}
Using the properties \eqref{cim} and \eqref{sigmaH}-\eqref{Hper}, we now have
\[
r \int_{B_r} |\nh (u\circ \delta_\la)|^2 dg = (r\la) \la^{1-Q} \int_{B_{r\la}} |\nh u|^2 dg',
\]
 \[
 \int_{S_r} (u\circ \delta_\la)^2 |\nh \rho| d\sigma_H = \la^{1-Q} \int_{S_{r\la}} u^2  |\nh \rho| d\sigma_H,
 \]
 and thus the desired conclusion immediately follows.

\end{proof}

Our second result generalizes a corresponding property of the Dirichlet integral of a classical harmonic function in $\Rn$.

\begin{prop}\label{P:beauty}
Let $\bG$ be a Carnot group of arbitrary step and let $Z$ be the infinitesimal generator of the group dilations \eqref{dilG} above. If $u$ is harmonic function in $B_R$, then for
every $0<r<R$ we have
\[
D(r)  = \int_{S_r} u \frac{Zu}{r}
 |\nabla_H\rho| d\sigma_H.
\]
\end{prop}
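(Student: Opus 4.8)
The plan is to prove the identity
\[
D(r) = \int_{S_r} u\,\frac{Zu}{r}\,|\nabla_H \rho|\,d\sigma_H
\]
by starting from an integration-by-parts argument. Since $u$ is harmonic, $\Delta_H u = \sum_i X_i^2 u = 0$, so the natural first step is to write the Dirichlet energy $D(r) = \int_{B_r} |\nabla_H u|^2\,dg$ using the product rule for the horizontal vector fields and the divergence theorem adapted to the sub-Laplacian. Concretely, I would use the identity $\operatorname{div}_H(u\,\nabla_H u) = |\nabla_H u|^2 + u\,\Delta_H u = |\nabla_H u|^2$, valid because $u$ is harmonic. Integrating this over $B_r$ and applying the horizontal divergence theorem converts the volume integral into a boundary integral over $S_r$ involving $u$ times the outward horizontal conormal derivative. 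In view of \eqref{Nhv} and \eqref{Nh}, the relevant boundary flux of $\nabla_H u$ against the $H$-normal $\boldsymbol N_H$ produces a factor $\langle \nabla_H u, \nabla_H \rho\rangle / |\nabla \rho|$, and the perimeter measure $d\sigma_H = |\boldsymbol N_H|\,dH_{N-1}$ from \eqref{sigmaH} accounts for the weight.

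The crux is then to identify the boundary integrand with $u\,(Zu/r)\,|\nabla_H\rho|$. This is where the homogeneity structure enters. The key observation is that on the level set $S_r = \{\rho = r\}$, the horizontal gradient $\nabla_H \rho$ governs the flux, and one needs to relate $\langle \nabla_H u, \nabla_H \rho\rangle$ to the dilation generator $Z$ acting on $u$. Here I would exploit the fact that $\rho$ is homogeneous of degree one, so by the Euler identity \eqref{E0} we have $Z\rho = \rho$. The plan is to show that on $S_r$ the conormal direction $\nabla_H\rho$ and the dilation field $Z$ are aligned in the precise sense needed: I expect a pointwise relation of the form $\langle \nabla_H u, \nabla_H\rho\rangle = \tfrac{\rhh^2}{\rho}\,\dots$ is not quite it, so more carefully, one wants $\langle \nabla_H u, \nabla_H \rho\rangle\,/\,|\nabla\rho|$ on $\{\rho = r\}$ to collapse, after inserting the weight, into $(Zu)\,\rhh / r$. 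The cleanest route is to recall that $Z$ is transverse to the level sets and that its radial part along $\nabla\rho$ is controlled by $\rho$; combining the normal decomposition of $Z$ with $Z\rho = \rho$ should pin down the constant $1/r$ (since $\rho = r$ on $S_r$).

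The main obstacle I anticipate is justifying the precise pointwise matching between the horizontal conormal derivative and $Zu/r$ on $S_r$. Unlike the Euclidean case, where $Z = r\,\partial_r$ and $\partial_\nu u = \partial_r u$ coincide trivially, here $Z$ is the anisotropic dilation generator that mixes the layers $V_1,\dots,V_r$ with weights $1,\dots,r$, and it need not be purely horizontal; yet the flux in the divergence theorem only sees the horizontal part of the normal derivative. The reconciliation must come from the relation between $\nabla_H\rho$ and $Z$ on the level sets of $\rho$. I would establish the needed decomposition by differentiating the homogeneity relation $\rho(\delta_\lambda g) = \lambda\rho(g)$, which via \eqref{dZ} gives $Z\rho = \rho$, and then show that the component of $Z$ normal to $S_r$ carries exactly the factor that, paired with $|\nabla\rho|$, yields the claimed kernel. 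Once this geometric alignment lemma is in place, substituting $\rho = r$ on $S_r$ and collecting the homogeneity-zero weight $|\nabla_H\rho|$ completes the identification, and the conclusion follows immediately.
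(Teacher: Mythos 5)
Your opening move is sound: since $\Delta_H u=0$ and $\operatorname{div}X_j=0$ in a Carnot group, one has $\operatorname{div}\big(u\,\nh u\big)=|\nh u|^2$, so the divergence theorem together with \eqref{Nhv}--\eqref{Nh} gives
\begin{equation*}
D(r)\;=\;\int_{S_r} u\,\langle \nh u,\nh\rho\rangle\,\frac{dH_{N-1}}{|\nabla\rho|}\,.
\end{equation*}
The gap is in the step you yourself single out as the crux: the pointwise identification of $\langle\nh u,\nh\rho\rangle$ with $\frac{Zu}{\rho}\,|\nh\rho|^2$ on $S_r$. That identity is exactly the statement $E_u\equiv 0$, where $E_u$ is the \emph{discrepancy} of Definition \ref{D:discrep}, and it is false for general harmonic functions; a substantial part of the paper (Theorem \ref{T:mono}, Theorem \ref{T:GLgen}, Section \ref{S:examples}) is devoted precisely to what happens when it fails. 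Concretely, in $\Hn$ with $n=1$ the coordinate function $u=x_1$ is harmonic, yet by Lemma \ref{L:innerp} its discrepancy is $\frac{4t}{\rho^3}\Theta u=-\frac{4t\,y_1}{\rho^3}\not\equiv 0$. No decomposition of $Z$ along $\nabla\rho$ can repair this: $Z$ is not a horizontal field, the boundary flux only sees $\nh u$, and the misalignment between $\nh\rho$ and $Z$ is exactly what $E_u$ measures. The Euclidean intuition $Z=r\partial_r=r\partial_\nu$ is precisely what breaks down in the anisotropic setting.

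What is true---and what your route would actually require---is only the integrated statement $\int_{S_r}u\,E_u\,\frac{dH_{N-1}}{|\nabla\rho|}=0$ for harmonic $u$; but, given your first step, that is equivalent to the proposition itself, and it cannot be extracted from homogeneity of $\rho$ or any pointwise geometry (note that the companion integral $\int_{S_r}\frac{Zu}{r}\,E_u\,\frac{dH_{N-1}}{|\nabla\rho|}$ appearing in \eqref{D'2} does \emph{not} vanish in general). The paper's proof never compares $\nh\rho$ with $Z$ pointwise: it applies the representation formula \eqref{psi} of \cite{CGL} to $\psi=u^2$ (so that $\Delta_H\psi=2|\nh u|^2$), differentiates in $r$ to obtain \eqref{2usqG}, i.e.\ $\frac{d}{dr}\big[r^{1-Q}H(r)\big]=2\,r^{1-Q}D(r)$, and then rescales the height integral to the fixed sphere $S_1$, where differentiation in $r$ produces $Zu$ through \eqref{dZ}, $\frac{d}{dr}u(\delta_r g)=\frac1r Zu(\delta_r g)$. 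In other words, the factor $Zu$ arises from differentiating the dilation family $r\mapsto u\circ\delta_r$, not from any alignment of $Z$ with the conormal direction on $S_r$; your proposal, as written, cannot be completed.
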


\begin{proof}

We start with an even more general situation, and consider
H\"ormander type vector fields $\{X_1,...,X_m\}$ in $\R^N$. The
following formula is valid for $\psi \in C^{\infty}(\R^N)$ and
$0<t\leq R$, see \cite{CGL},
\begin{equation}\label{psi}
\psi(x) = \int_{\partial \Om(x,t)} \psi(y) \frac{|\nabla_H
\Gamma(x,y)|^2}{|\nabla \Gamma(x,y)|} dH_{N-1}(y) -
\int_{\Om(x,t)} \Delta_H \psi(y)
\big[\Gamma(x,y)-\frac{1}{t}\big] dy,
\end{equation}
where $\Delta_H = - \sum_{i=1}^m X^*_iX_i$, $\Gamma(x,y)$ is a positive
fundamental solution of $-\Delta_H$, $\Om(x,t) = \{y\in \R^N\mid
\Gamma(x,y)>\frac{1}{t}\}$, and $H_{N-1}$ denotes the standard $(N-1)$-dimensional Hausdorff measure in $\R^N$. 

Now, suppose we are in a Carnot group
$\bG$. Note that, since in a Carnot group $X_i^* = - X_i$, the operator $\Delta_H$ coincides with \eqref{sl} above. Let $B_r = \{g\in \bG \mid \Gamma(g,e)>\frac{1}{r^{Q-2}}\}$,
i.e. $B_r = \Om(e,r^{Q-2})$. Setting $t=r^{Q-2}$ in  \eqref{psi} we
obtain
\begin{equation}\label{psiG}
\psi(e)\ =\ \int_{S_r}\ \psi(g)\
\frac{|\nabla_H\Gamma(g)|^2}{|\nabla\Gamma(g)|}\ dH_{N-1}(g)\ -\
\int_{B_r}\ \Delta_H \psi(g)\ \big[\Gamma(g)-\frac{1}{r^{Q-2}}\big]\
dg.
\end{equation}

Introduce now the function $\rho(g) = \Gamma(g)^{-1/(Q-2)}$ in \eqref{rho} above.
With $E(s) = s^{Q-2}$, we easily recognize that
\[
\nabla_H \Gamma = - \frac{E'(\rho)}{E(\rho)^2} \nabla_H\rho .
\]

This gives
\[
\frac{|\nabla_H \Gamma|^2}{|\nabla \Gamma|} =
 \frac{E'(\rho)}{E(\rho)^2}  \frac{|\nabla_H\rho|^2}{|\nabla \rho|} = \frac{Q-2}{\rho^{Q-1}}  \frac{|\nabla_H\rho|^2}{|\nabla \rho|} .
\]

Substitution in \eqref{psiG} gives
\begin{equation}\label{2psiG}
\psi(e) = \frac{Q-2}{r^{Q-1}} \int_{S_r} \psi(g)
\frac{|\nabla_H\rho(g)|^2}{|\nabla\rho(g)|} dH_{N-1}(g) -
\int_{B_r} \Delta_H \psi(g) \big[\frac{1}{\rho^{Q-2}}
-\frac{1}{r^{Q-2}}\big] dg.
\end{equation}

Suppose now that $\psi = u^2$, with $\Delta_H u = 0$ in $\bG$, then
\eqref{2psiG} gives
\begin{equation}\label{usqG}
u(e)^2 = \frac{Q-2}{r^{Q-1}} \int_{S_r}\ u(g)^2
\frac{|\nabla_H\rho(g)|^2}{|\nabla\rho(g)|} dH_{N-1}(g) - 2
\int_{B_r} |\nabla_H u(g)|^2 \big[\frac{1}{\rho^{Q-2}}
-\frac{1}{r^{Q-2}}\big] dg.
\end{equation}

Notice that since $\Gamma$ is homogeneous of degree $2-Q$, the
function $\rho$ in \eqref{rho} is homogeneous of degree one, and therefore
$|\nabla_H \rho|$ (or any of its powers) is homogeneous of degree
zero. Differentiating \eqref{usqG} with respect to $r$ we obtain
\begin{equation}\label{2usqG}
\frac{d}{dr} \frac{1}{r^{Q-1}} \int_{S_r} u(g)^2
\frac{|\nabla_H\rho(g)|^2}{|\nabla\rho(g)|} dH_{N-1}(g) =
\frac{2}{r^{Q-1}} \int_{B_r} |\nabla_H u(g)|^2 dg.
\end{equation}

Rewriting \eqref{2usqG} we obtain
\begin{equation}\label{3usqG}
D(r) = \int_{B_r} |\nabla_H u(g)|^2 dg = \frac{r^{Q-1}}{2}
\frac{d}{dr} \frac{1}{r^{Q-1}} \int_{S_r} u(g)^2
\frac{|\nabla_H\rho(g)|^2}{|\nabla\rho(g)|} dH_{N-1}(g).
\end{equation}

Using the properties  \eqref{sigmaH}-\eqref{Hper} above, after a rescaling we obtain
\begin{align}\label{kf}
& \frac{1}{r^{Q-1}} \int_{S_r}\ u(g)^2
\frac{|\nabla_H\rho(g)|^2}{|\nabla\rho(g)|} dH_{N-1}(g) =
\frac{1}{r^{Q-1}} \int_{S_r} u(g)^2\ |\nabla_H\rho(g)|
d\sigma_H(g) \\
& =  \int_{S_1} u(\delta_r(g))^2 |\nabla_H\rho(g)|
d\sigma_H(g). \notag
\end{align}

Now we use \eqref{kf}, and \eqref{dZ} to obtain from \eqref{3usqG}
\begin{align}\label{ff}
D(r) = & \frac{r^{Q-1}}{2} \frac{d}{dr} \int_{S_1}
u(\delta_r(g))^2 |\nabla_H\rho(g)| d\sigma_H(g)
\\
& = \frac{r^{Q-1}}{r} \int_{S_1} u(\delta_r(g))
Zu(\delta_r(g)) |\nabla_H\rho(g)| d\sigma_H(g), \notag\\
& = \frac{1}{r} \int_{S_r} u(g) Zu(g) |\nabla_H\rho(g)|
d\sigma_H(g), \notag
\end{align} where we have denoted by $Z$ the
generator of the nonisotropic dilations. This completes the proof.

\end{proof}

We note for future use the following basic consequence of \eqref{2usqG} above.

\begin{cor}\label{C:averages}
Let $\Delta_H u = 0$ in $B_R\subset \bG$. Then, the averages
\[
r\ \longrightarrow\  \frac{1}{r^{Q-1}} \int_{S_r} u^2
\frac{|\nabla_H\rho|^2}{|\nabla\rho|} dH_{N-1},\ \ \ \ 0<r<R,
\]
are nondecreasing. This implies, in particular, that
\begin{equation}\label{solsph}
\int_{B_{r}} u^2 |\nabla_H \rho|^2 dg \le \frac{r}{Q} \int_{S_r} u^2
\frac{|\nabla_H\rho|^2}{|\nabla\rho|} dH_{N-1}
\end{equation}
\end{cor}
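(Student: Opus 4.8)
The plan is to read off both assertions directly from the key identity \eqref{2usqG} established in the proof of Proposition \ref{P:beauty}. For the first statement, I would simply observe that the right-hand side of \eqref{2usqG}, namely $\frac{2}{r^{Q-1}}\int_{B_r}|\nabla_H u|^2\,dg$, is manifestly non-negative, since it is the integral of the non-negative quantity $|\nabla_H u|^2$ divided by $r^{Q-1}>0$. Hence the function
\[
r\longmapsto \frac{1}{r^{Q-1}}\int_{S_r} u^2 \frac{|\nabla_H\rho|^2}{|\nabla\rho|}\,dH_{N-1}
\]
has non-negative derivative on $(0,R)$, and is therefore non-decreasing, which is precisely the claim.

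For the inequality \eqref{solsph}, I would introduce the shorthand $A(s)=\int_{S_s} u^2 \frac{|\nabla_H\rho|^2}{|\nabla\rho|}\,dH_{N-1}$ and rewrite the left-hand side by means of the coarea formula. Writing the integrand as $u^2|\nabla_H\rho|^2 = \big(u^2 |\nabla_H\rho|^2/|\nabla\rho|\big)\,|\nabla\rho|$ and slicing with respect to the level-set function $\rho$ gives
\[
\int_{B_r} u^2 |\nabla_H\rho|^2\,dg = \int_0^r A(s)\,ds.
\]
Then I would invoke the monotonicity just proved: since $s\mapsto s^{-(Q-1)}A(s)$ is non-decreasing, for every $0<s\le r$ one has $A(s)\le (s/r)^{Q-1} A(r)$. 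Substituting this bound under the integral sign and computing $\int_0^r s^{Q-1}\,ds = r^Q/Q$ yields
\[
\int_0^r A(s)\,ds \le \frac{A(r)}{r^{Q-1}}\int_0^r s^{Q-1}\,ds = \frac{r}{Q}\,A(r),
\]
which is exactly \eqref{solsph}.

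The only point requiring care is the application of the coarea formula: one must check that $\rho$ is sufficiently regular (Lipschitz away from $e$, with $|\nabla\rho|\neq 0$ on the level sets $S_s$ bounding the $C^\infty$ balls $B_s$) so that the slicing identity $\int_{B_r} h\,|\nabla\rho|\,dg = \int_0^r\int_{S_s} h\,dH_{N-1}\,ds$ is valid for the choice $h = u^2|\nabla_H\rho|^2/|\nabla\rho|$, and that there is no integrability obstruction as $s\to 0^+$. Granting this standard fact, both parts of the corollary follow with no further computation; I expect the coarea step to be the main, though minor, obstacle, everything else being an immediate consequence of the differential identity \eqref{2usqG}.
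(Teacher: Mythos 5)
Your proposal is correct and follows essentially the same route as the paper: the monotonicity is read off from the sign of the right-hand side of \eqref{2usqG}, and the inequality \eqref{solsph} is obtained by the coarea formula combined with the bound $A(s)\le (s/r)^{Q-1}A(r)$ and the computation $\int_0^r s^{Q-1}\,ds = r^Q/Q$, which is exactly the paper's argument written in slightly more explicit notation.
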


\begin{proof}
By the first part of Corollary \ref{C:averages}, and the coarea formula, we have
\begin{align*}
\int_{B_{r}} u^2 |\nabla_H \rho|^2 dg' & = \int_0^r t^{Q-1} \frac{1}{t^{Q-1}} \int_{S_t} u^2
\frac{|\nabla_H\rho|^2}{|\nabla\rho|} dH_{N-1} dt
\\
& \le \frac{r}{Q} \int_{S_r} u^2
\frac{|\nabla_H\rho|^2}{|\nabla\rho|} dH_{N-1}.
\end{align*}

\end{proof}

\begin{lemma}\label{L:nondeg}
Let $u$ be a solution of $\Delta_H u = 0$ in $B_R\subset \bG$. For any $0<r<R$, either $u\equiv 0$ in $B_r$, or $H(r)\not = 0$.
\end{lemma}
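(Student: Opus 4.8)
The statement is equivalent to the implication: if $H(r) = 0$, then $u \equiv 0$ in $B_r$. The plan is to deduce this from the energy identity \eqref{2usqG} already established in the proof of Proposition \ref{P:beauty}. First I would record that, combining \eqref{sigmaH} and \eqref{Nh}, the height rewrites as
\[
H(s) = \int_{S_s} u^2\, \frac{|\nabla_H\rho|^2}{|\nabla\rho|}\, dH_{N-1},
\]
so that $H(s)/s^{Q-1}$ is exactly the average appearing in Corollary \ref{C:averages}. Since that average is nonnegative and nondecreasing, the hypothesis $H(r)=0$ forces $H(s)/s^{Q-1}=0$, hence $H(s)=0$, for every $0<s\le r$.

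Next I would differentiate. By \eqref{2usqG},
\[
\frac{d}{ds}\,\frac{H(s)}{s^{Q-1}} = \frac{2}{s^{Q-1}}\, D(s),
\]
and since the left-hand side vanishes identically on $(0,r)$, we obtain $D(s) = \int_{B_s}|\nabla_H u|^2\,dg = 0$ for all $0<s<r$. Because $u$ is smooth (harmonic, hence $C^\infty$ by H\"ormander's hypoellipticity theorem), $|\nabla_H u|^2$ is continuous, so $D \equiv 0$ on $(0,r)$ yields $\nabla_H u \equiv 0$, i.e. $X_j u = 0$ for $j=1,\dots,m$, at every point of $B_r$.

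The decisive step is then to upgrade the vanishing of the horizontal gradient to the constancy of $u$. Here I would invoke the bracket-generating structure of $\bG$: since $X_1,\dots,X_m$ together with their iterated commutators up to step $r$ span the whole Lie algebra, the vanishing of all horizontal derivatives of $u$ forces every derivative to vanish, so $u$ is locally constant; and because $B_r = \delta_r(B_1)$ is connected (each $g\in B_r$ is joined to $e$ by the dilation path $\lambda\mapsto\delta_\lambda g$, which stays in $B_r$ for $\lambda\in(0,1]$ since $\rho$ is homogeneous of degree one), we conclude $u\equiv c$ on $B_r$. Substituting $u\equiv c$ into the rewritten height gives
\[
0 = H(r) = c^2 \int_{S_r} \frac{|\nabla_H\rho|^2}{|\nabla\rho|}\, dH_{N-1};
\]
as the characteristic set $\{|\nabla_H\rho|=0\}$ is $H_{N-1}$-negligible on $S_r$, the integral is strictly positive, whence $c=0$ and $u\equiv 0$ in $B_r$.

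The main obstacle I anticipate is this last passage. One must justify both that horizontal-gradient vanishing propagates to full-gradient vanishing (the only place the step-$r$ structure is genuinely used, through H\"ormander's condition) and that $B_r$ is connected, so that a single constant value is produced; the positivity of the remaining surface integral, needed to force $c=0$, is routine but relies on the characteristic points of $S_r$ forming a set of $H_{N-1}$-measure zero.
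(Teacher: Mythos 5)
Your proof is correct, and it takes a mildly but genuinely different route from the paper's. The paper's own argument is three lines: from $H(r)=0$ it concludes that $u$ vanishes on $S_r$ (more precisely, $H_{N-1}$-a.e.\ off the set where the weight $|\nabla_H\rho|^2/|\nabla\rho|$ vanishes), so Proposition \ref{P:beauty} gives $D(r)=\int_{S_r}u\,\tfrac{Zu}{r}\,|\nabla_H\rho|\,d\sigma_H=0$ at the single radius $r$, and bracket generation then yields $u\equiv 0$ in $B_r$. You avoid Proposition \ref{P:beauty} altogether and instead combine Corollary \ref{C:averages} with the differentiated mean-value identity \eqref{2usqG}: monotonicity of the nonnegative averages propagates $H\equiv 0$ down to all radii $s\le r$, and differentiation then forces $D(s)=0$ for every $s<r$. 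Since Proposition \ref{P:beauty} and Corollary \ref{C:averages} both flow from \eqref{2usqG}, the two mechanisms are close cousins; what your version buys is an explicit treatment of a step the paper leaves tacit, namely that bracket generation only yields that $u$ is \emph{constant} on the (connected) ball, and the constant must still be identified as zero --- the paper gets this from the boundary vanishing of $u$, while you get it from connectedness (your dilation-path argument is the right one) plus positivity of $\int_{S_r}|\nabla_H\rho|^2/|\nabla\rho|\,dH_{N-1}$. One refinement: rather than invoking the $H_{N-1}$-negligibility of the characteristic set of $S_r$ --- a fact (essentially Derridj's theorem) that the paper never establishes --- you can obtain the needed positivity for free by applying the mean-value formula \eqref{mvf} to the harmonic function $u\equiv 1$, which gives
\begin{equation*}
\int_{S_r}\frac{|\nabla_H\rho|^2}{|\nabla\rho|}\,dH_{N-1}=\frac{r^{Q-1}}{Q-2}>0.
\end{equation*}
In fairness, the paper's own first step ($H(r)=0\Rightarrow u\equiv 0$ on $S_r$) quietly relies on the same kind of negligibility, so on this point your write-up is, if anything, the more careful of the two.
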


\begin{proof}
Suppose $H(r) = 0$ for some $r\in (0,R)$. Then, $u\equiv 0$ on $S_r$. By Proposition \ref{P:beauty} we must have $D(r) = 0$, and therefore $\nabla_H u \equiv 0$ in $B_r$. Since $X_1,...,X_m$ generate the Lie algebra of all left-invariant vector fields, we conclude  that $u\equiv 0$  in $B_r$.

\end{proof}

\begin{rmrk}\label{R:nonvan}
Henceforth, when we speak of the frequency of a harmonic function $u$ in $B_R$ we will always tacitly assume that for no $0<r<R$ the function $u$ vanishes identically in $B_r$. In view of  Lemma \ref{L:nondeg} this assumptions guarantees that $H(r)\not= 0$ for every $0<r<R$, and therefore the frequency $N(r)$ of $u$ is well defined in $(0,R)$.
\end{rmrk}

With Proposition \ref{P:beauty} in hands, we immediately obtain the following generalization of a result which, in the special case of the Heisenberg group $\Hn$, was proved in \cite{GL}.

\begin{prop}\label{P:hom}
Let $\bG$ be a Carnot group of arbitrary step, and suppose that $u$ be harmonic in $B_R\subset \bG$. If $u$ is homogeneous of degree $\kappa\ge 0$, then 
$N(u;r) = \kappa,$ for every $r\in (0,R)$.
\end{prop}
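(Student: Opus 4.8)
The plan is to combine the surface-integral representation of the Dirichlet integral from Proposition \ref{P:beauty} with the Euler formula \eqref{E0} characterizing homogeneous functions. The starting point is the identity
\[
D(r) = \int_{S_r} u \, \frac{Zu}{r} \, |\nabla_H \rho| \, d\sigma_H,
\]
which holds for any harmonic $u$ in $B_R$ and $0<r<R$.

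The decisive step is to exploit homogeneity. Since $u$ is homogeneous of degree $\kappa$, the Euler formula \eqref{E0} gives $Zu = \kappa u$ pointwise on $\bG$. Substituting this into the integrand above, the factor $u \cdot Zu$ becomes $\kappa u^2$, so that
\[
D(r) = \frac{\kappa}{r} \int_{S_r} u^2 \, |\nabla_H \rho| \, d\sigma_H = \frac{\kappa}{r} H(r),
\]
where the last equality is precisely the definition \eqref{H} of the height $H(r)$ (recall that for $g_0 = e$ the kernel $|\nabla_H d(\cdot,e)|$ is written $|\nabla_H \rho|$).

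Finally, I would insert this into the definition \eqref{N} of the frequency,
\[
N(u;r) = \frac{r D(r)}{H(r)} = \frac{r \cdot \tfrac{\kappa}{r} H(r)}{H(r)} = \kappa,
\]
valid for every $r \in (0,R)$; the division is legitimate because, under the standing convention of Remark \ref{R:nonvan}, $H(r) \neq 0$ on $(0,R)$. There is essentially no obstacle here: the entire content is packaged into Proposition \ref{P:beauty} and the Euler characterization \eqref{E0}, and the argument is a one-line substitution once those are in hand. The only point that deserves a word of care is ensuring the hypotheses of Proposition \ref{P:beauty} are met (namely that $u$ is harmonic in $B_R$), which is given, and that $u \in C^1$ so that the Euler formula applies, which follows from the smoothness of harmonic functions noted after \eqref{slG}.
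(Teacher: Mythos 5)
Your proposal is correct and coincides with the paper's own proof: both substitute the Euler formula \eqref{E0}, $Zu = \kappa u$, into the surface representation of $D(r)$ from Proposition \ref{P:beauty} to get $D(r) = \frac{\kappa}{r}H(r)$, and then read off $N(u;r)=\kappa$ from \eqref{N}. Your added remarks on the non-vanishing of $H(r)$ (Remark \ref{R:nonvan}) and the smoothness of $u$ are sound points of care that the paper leaves implicit.
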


\begin{proof}
Suppose that $u$ be harmonic in $B_R$ and homogeneous of degree $\kappa$. Then, we have by  Proposition \ref{P:beauty} and \eqref{E0} 
\[
D(r) = \int_{\p B_r} u \frac{Zu}{r}
 |\nabla_H\rho| d\sigma_H = \frac{\kappa}{r} \int_{\p B_r} u^2
 |\nabla_H\rho| d\sigma_H = \frac{\kappa}{r} H(r).
\]
From \eqref{N} we conclude
\[
N(r) = \kappa,\ \ \ \ \ 0<r<R.
\]

\end{proof}

In Proposition \ref{P:monhom} below we will establish a partial converse to Proposition \ref{P:hom}.

%>>>>>>>>>>>>>>>>>>>>>>>>>>>>>>>>>>>>>>>>>>>>>>>>>>>>>>>>>>>>>>

\section{Local consequences of the boundedness of the frequency}\label{S:fv}

In this section we study the local  behavior of harmonic functions having bounded frequency. 
We begin with the following simple, but very important consequence of formula \eqref{2usqG} above.

\begin{lemma}\label{L:H'}
Suppose that $u$ be harmonic in $B_R$. Then, for every $r\in (0,R)$ one has
\[
H'(r) = \frac{Q-1}{r} H(r) + 2 D(r).
\]
\end{lemma}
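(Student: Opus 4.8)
The plan is to read off the statement directly from formula \eqref{2usqG}, which was established in the course of proving Proposition \ref{P:beauty}, by matching the spherical integral appearing there with the height $H(r)$. The key preliminary observation is that the somewhat awkward kernel $|\nabla_H\rho|^2/|\nabla\rho|$ in \eqref{2usqG} is exactly the combination that converts the Hausdorff measure $dH_{N-1}$ into the $H$-perimeter weight $\rhh\, d\sigma_H$. Indeed, by \eqref{sigmaH} and \eqref{Nh} one has $d\sigma_H = |\bN_H|\, dH_{N-1}$ with $|\bN_H| = \rhh/|\nabla\rho|$ on $S_r$, so that
\[
\frac{1}{r^{Q-1}} \int_{S_r} u^2 \frac{|\nabla_H\rho|^2}{|\nabla\rho|}\, dH_{N-1} = \frac{1}{r^{Q-1}} \int_{S_r} u^2\, \rhh\, d\sigma_H = \frac{1}{r^{Q-1}} H(r),
\]
the last equality being the definition \eqref{H} of the height. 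This identification is the only nontrivial point, and it is really just bookkeeping with the two surface measures.

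Having made this identification, I would rewrite \eqref{2usqG} in the compact form
\[
\frac{d}{dr}\left[\frac{1}{r^{Q-1}} H(r)\right] = \frac{2}{r^{Q-1}} D(r).
\]
The rest is the product rule. Expanding the left-hand side gives
\[
-\frac{Q-1}{r^{Q}} H(r) + \frac{1}{r^{Q-1}} H'(r) = \frac{2}{r^{Q-1}} D(r),
\]
and multiplying through by $r^{Q-1}$ and rearranging yields
\[
H'(r) = \frac{Q-1}{r} H(r) + 2 D(r),
\]
which is precisely the asserted first variation formula for the height.

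There is essentially no obstacle here beyond the measure-theoretic matching above; in particular the differentiability of $r\mapsto H(r)$ is already implicitly guaranteed by \eqref{2usqG}, since the monotone quantity in Corollary \ref{C:averages} is shown there to be differentiable with the stated derivative. I would, however, remark that the smoothness of $u$ (harmonic functions are $C^\infty$ by hypoellipticity, as noted after \eqref{slG}) together with the fact that $S_r$ is a $C^\infty$ level set of $\rho$ away from $e$ ensures all the integrals and their $r$-derivatives are well defined for $0<r<R$, so that the formal differentiation is legitimate.
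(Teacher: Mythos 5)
Your proof is correct and follows essentially the same route as the paper: the paper's proof likewise rewrites \eqref{2usqG} as $\frac{d}{dr}\left[r^{-(Q-1)}H(r)\right] = 2r^{-(Q-1)}D(r)$ (implicitly using the identification $u^2\frac{|\nabla_H\rho|^2}{|\nabla\rho|}\,dH_{N-1} = u^2|\nabla_H\rho|\,d\sigma_H$ that you spell out via \eqref{sigmaH} and \eqref{Nh}) and then applies the product rule. Your explicit treatment of the measure-matching and of the differentiability of $H$ only makes visible what the paper leaves tacit.
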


\begin{proof}
Formula \eqref{2usqG} can be rewritten as follows
\[
\frac{d}{dr} \frac{1}{r^{Q-1}} H(r) =
\frac{2}{r^{Q-1}} D(r).
\]
Differentiating the left-hand side, we obtain
\[
- \frac{Q-1}{r} H(r) + H'(r) = 2 D(r),
\]
which gives the desired conclusion.

\end{proof}

For a function $u\in C(\bG)$ and $r>0$ we now define the following mean value operators
\begin{equation}\label{smvf0}
M_r u(g) = \frac{Q-2}{Q} {r^{-Q}} \int_{B_r(g)} u(g')
|\nabla_H d(g',g)|^2 dg'.
\end{equation}
Notice that if $\Delta_H u = 0$, then we obtain from \eqref{2psiG} above
\begin{equation}\label{mvf}
u(g) = \frac{Q-2}{t^{Q-1}} \int_{\partial B_t(g)} u(g')
\frac{|\nabla_H d(g' \circ g)|^2}{|\nabla d(g' \circ g)|} dH_{N-1}(g').
\end{equation}
Integrating \eqref{mvf} over the interval $(0,r)$, and using the co-area formula, we find
\begin{equation}\label{smvf}
u(g) = \frac{Q-2}{Q} {r^{-Q}} \int_{B_r(g)} u(g')
|\nabla_H d(g' \circ g)|^2 dg' = M_r u(g).
\end{equation}

Before proving Theorem \ref{T:Nb} we recall the following key Lemma 3.1 from \cite{BL} which allows to connect the averages \eqref{smvf} on balls centered at different points.

\begin{lemma}\label{L:trick}
Let $\bG$ be a Carnot group and let $f\ge 0$ be a continuous function on $\bG$. There exist constants $C, \lambda, \Lambda >0$, depending only on $\bG$, such that for every $r>0$ one can find $g_0\in \bG$ such that
\begin{itemize}
\item[(i)] $g_0\in \p B_{\lambda r}$;
\item[(ii)] $M_r f(g) \le C M_{\Lambda r} f(g_0)$,\ \ \ for every\ $g\in B_r$;
\item[(iii)]  $M_r f(g_0) \le C M_{\Lambda r} f(g)$,\ \ \ for every\ $g\in B_r$.
\end{itemize}
\end{lemma}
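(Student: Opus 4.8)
The plan is to read $M_r$ as a genuine weighted average and then to run a one-step Harnack-type comparison in which the pivot point $g_0$ is pushed far away, in a carefully chosen direction, so that the weight $|\nh d|^2$ never degenerates on the regions that matter. First I would record two elementary facts about the kernel $w(g',g):=|\nh d(g',g)|^2=\rhh^2(g^{-1}\circ g')$. On one hand, since $\rho$ is smooth away from $e$ and $\rhh$ is homogeneous of degree $0$, one has a universal bound $w\le C_1:=\sup_{S_1}\rhh^2<\infty$. On the other hand, applying the mean value identity \eqref{smvf} to the harmonic constant $u\equiv 1$ gives $M_r 1=1$, so $\frac{Q-2}{Q}r^{-Q}w(g',g)\,dg'$ is a probability density on $B_r(g)$ and $M_r f(g)$ is an honest average. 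Consequently, for nonnegative $f$ one always has the crude upper bound $M_r f(g)\le \frac{Q-2}{Q}C_1 r^{-Q}\int_{B_r(g)}f\,dg'$, and, for any measurable $E\subset B_s(g_0)$ on which $w(\cdot,g_0)\ge\delta$, the crude lower bound $M_s f(g_0)\ge \frac{Q-2}{Q}\delta s^{-Q}\int_E f\,dg'$.

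The lemma then reduces to producing a single $g_0$ and a constant $\delta>0$ for which these two bounds can be chained, with the integration regions nested and the weight bounded below by $\delta$ on them. For containment, the pseudo-triangle inequality for $d$ (with constant $C_0$ depending only on $\bG$) gives $\bigcup_{g\in B_r}B_r(g)\subset B_{c_1 r}$ with $c_1=2C_0$; once $g_0\in\p B_{\la r}$ is fixed, the region $B_{c_1 r}$ lies inside $B_{\Lambda r}(g_0)$, and $B_r(g_0)$ lies inside $B_{\Lambda r}(g)$ for every $g\in B_r$, as soon as $\Lambda$ is a fixed multiple of $C_0(\la+c_1)$. The delicate point is the lower bound on the weight. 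Here I would fix $\zeta\in S_1$ such that both $\zeta$ and $\zeta^{-1}$ are non-characteristic, i.e. $\rhh\neq 0$ there; this is possible because the characteristic set of the smooth hypersurface $S_1$ has vanishing surface measure and inversion preserves $S_1$. Setting $\til g_0=\zeta^{-1}$ and $g_0=\delta_{\la r}\til g_0\in\p B_{\la r}$, I would rescale by $\delta_{1/(\la r)}$ and use that the dilations are automorphisms together with the $0$-homogeneity of $\rhh$: writing $\til g=\delta_{1/(\la r)}g$ and so on, one gets $w(g',g_0)=\rhh^2(\til g_0^{-1}\circ\til g')$ and $w(g',g)=\rhh^2(\til g^{-1}\circ\til g')$. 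As $\la\to\infty$ the rescaled arguments collapse, so $\til g_0^{-1}\circ\til g'\to\til g_0^{-1}=\zeta$ and $\til g^{-1}\circ\til g'\to\til g_0=\zeta^{-1}$, uniformly over $g\in B_r$ and $g'$ in the relevant region. Since $\rhh$ is continuous and nonzero at both limit points, there are $\delta>0$ and $\la$ large (both depending only on $\bG$) for which $w(\cdot,g_0)\ge\delta$ on $B_{c_1 r}$ and $w(\cdot,g)\ge\delta$ on $B_r(g_0)$ for every $g\in B_r$.

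With these ingredients the two comparisons follow by combining the crude bounds: taking $E=B_{c_1 r}\supset\bigcup_g B_r(g)$ yields $M_r f(g)\le \frac{C_1}{\delta}\Lambda^Q\,M_{\Lambda r}f(g_0)$ for all $g\in B_r$, which is (ii); and taking $E=B_r(g_0)\subset B_{\Lambda r}(g)$ yields $M_r f(g_0)\le \frac{C_1}{\delta}\Lambda^Q\,M_{\Lambda r}f(g)$, which is (iii), while (i) holds by construction. All constants are scale invariant because every estimate was reduced, via $\delta_{1/(\la r)}$, to a fixed configuration on $S_1$. I expect the main obstacle to be precisely this degeneracy of $\rhh$ on the characteristic set: one cannot compare averages across a region where $w$ may vanish, and the resolution is the combination of choosing the direction $\zeta$ of $g_0$ (and its inverse) non-characteristic and sending $g_0$ to infinity, so that after rescaling the quotients $g_0^{-1}\circ g'$ and $g^{-1}\circ g'$ concentrate near the fixed non-characteristic directions $\zeta,\zeta^{-1}$, which is what forces $\delta$, $\la$, $\Lambda$ and $C=\frac{C_1}{\delta}\Lambda^Q$ to depend only on $\bG$.
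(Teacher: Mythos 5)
The paper does not prove Lemma \ref{L:trick} at all: it is recalled verbatim as Lemma 3.1 of \cite{BL}, so there is no internal proof to compare yours against; the only ``route'' the paper takes is a citation. Your argument is a correct, self-contained substitute, and it isolates exactly the right difficulty: the kernel $w(g',g)=|\nh d(g',g)|^2=\rhh^2(g^{-1}\circ g')$ of the mean value operator $M_r$ degenerates on characteristic points, so averages at nearby centers cannot be compared directly. Your remedy --- put the pole at $g_0=\delta_{\la r}(\zeta^{-1})$ with $\zeta\in S_1$ chosen so that both $\zeta$ and $\zeta^{-1}$ are non-characteristic, then rescale by $\delta_{1/(\la r)}$ so that the quotients $g_0^{-1}\circ g'$ (for $g'\in B_{c_1r}$) and $g^{-1}\circ g'$ (for $g\in B_r$, $g'\in B_r(g_0)$) concentrate near the fixed non-characteristic points $\zeta$ and $\zeta^{-1}$ --- works, and the scale invariance correctly forces $\la$, $\Lambda$, $\delta$, and hence $C=C_1\Lambda^Q/\delta$, to depend only on $\bG$. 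The bookkeeping is also sound: the universal upper bound $w\le C_1$ follows from $0$-homogeneity of $\rhh$ and compactness of $S_1$; the inclusions $\bigcup_{g\in B_r}B_r(g)\subset B_{2C_0r}\subset B_{\Lambda r}(g_0)$ and $B_r(g_0)\subset B_{\Lambda r}(g)$ follow from the quasi-triangle inequality for $d$; and chaining the crude upper and lower bounds gives (ii) and (iii), while (i) holds by construction since $\rho(g^{-1})=\rho(g)$ makes $\rho(g_0)=\la r$.

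One step deserves more justification than you give it: the existence of $\zeta$. You assert that the characteristic set $\Sigma\subset S_1$ has vanishing surface measure; this is true but is itself a theorem (Derridj), and you should either cite it or replace it. Note also the two facts you use silently: $S_1$ is a genuine smooth compact hypersurface because Euler's formula $Z\rho=\rho=1\neq 0$ on $S_1$ guarantees $\nabla\rho\neq 0$ there, and inversion maps $S_1$ onto itself because the symmetry $\Gamma(g,g')=\Gamma(g',g)$ yields $\rho(g^{-1})=\rho(g)$. If you wish to avoid Derridj, a softer argument suffices for your purposes: $\Sigma$ is closed with empty interior in $S_1$ (if $\nh\rho$ vanished on a relatively open subset of $S_1$, by $0$-homogeneity it would vanish on an open cone in $\bG$; there all $X_i\rho=0$, hence, since the $X_i$ bracket-generate the Lie algebra, every left-invariant derivative of $\rho$ vanishes, making $\rho$ locally constant and contradicting $Z\rho=\rho>0$). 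Then $\Sigma\cup\iota(\Sigma)$, with $\iota$ the inversion, is a union of two nowhere dense closed sets and cannot exhaust $S_1$ by the Baire category theorem, which is all you need. With this point repaired, your proof is complete and supplies the argument the paper delegates to \cite{BL}.
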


The next result establishes a remarkable equivalence between the local boundedness of the frequency and the order of vanishing at a point of a harmonic function. We thank Agnid Banerjee for pointing the second part to our attention.

\begin{thrm}\label{T:Nb}
Let $u$ be a harmonic function in $B_R$ and suppose that $N(u,\cdot) \in L^\infty(0,R)$, with $\kappa = ||N(u,\cdot)||_{L^\infty(0,R)}$. Then, there exists a universal $C^\star >0$ such that
\begin{equation}\label{DC}
\int_{B_{2r}} u^2 dg \le C^\star 2^{Q+2\kappa}  \int_{B_r} u^2 dg, \ \ \ \ \ 0<r<R/2.
\end{equation}
Vice-versa, suppose there exist a constant $\overline C>0$ such that
\begin{equation}\label{DC2}
\int_{B_{2r}} u^2 dg \le \overline C \int_{B_r} u^2 dg, \ \ \ \ \ 0<r<R.
\end{equation}
Then, $N(u,\cdot) \in L^\infty(0,R/2)$, and there is a universal $C'''>0$ such that $||N(u,\cdot)||_{L^\infty(0,R/2)} \le \overline C C'''$.
\end{thrm}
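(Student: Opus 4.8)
The plan is to route both implications through the auxiliary quantity $H(r)$ and the solid weighted integral $\int_{B_r} u^2 |\nh\rho|^2\,dg$, reserving the comparison with the plain mass $\int_{B_r} u^2\,dg$ for the end, where Lemma \ref{L:trick} does the essential work. For the first implication I would start from Lemma \ref{L:H'}: dividing by $H(r)$ and using the definition \eqref{N} gives the logarithmic identity $\frac{d}{dr}\log H(r)=\frac{Q-1+2N(u,r)}{r}$. Since $D(r)\ge 0$ and $H(r)>0$ we have $0\le N(u,r)\le\kappa$ on $(0,R)$, so integrating from $r$ to $\sigma r$ yields $H(\sigma r)\le \sigma^{Q-1+2\kappa}H(r)$ for every $\sigma\ge 1$; in particular $H(2r)\le 2^{Q-1+2\kappa}H(r)$. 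By the coarea computation behind Corollary \ref{C:averages} one has $\int_{B_r} u^2|\nh\rho|^2\,dg=\int_0^r H(t)\,dt$, so this bound integrates to $\int_{B_{2r}} u^2|\nh\rho|^2\,dg\le 2^{Q+2\kappa}\int_{B_r} u^2|\nh\rho|^2\,dg$, which already displays the exponent in \eqref{DC}; equivalently, in terms of \eqref{smvf0}, $M_{2r}(u^2)(e)\le 2^{2\kappa}M_r(u^2)(e)$.

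The remaining, and main, difficulty is to pass from this weighted doubling to the unweighted statement \eqref{DC}. One inequality is free: since $|\nh\rho|$ is homogeneous of degree zero and smooth off $e$, it is bounded, whence $\int_{B_r} u^2|\nh\rho|^2\,dg\le c_0\int_{B_r} u^2\,dg$. The reverse control of $\int_{B_r} u^2\,dg$ by a weighted integral is the delicate point, because $|\nh\rho|$ vanishes at the characteristic points of the spheres $S_r$, so no pointwise comparison is available. This is exactly where Lemma \ref{L:trick} enters: using the subharmonicity of $u^2$ together with the solid mean value representation \eqref{smvf}, one bounds $u^2$ on $B_r$ by the average $M_{\Lambda r}(u^2)(g_0)$ taken at the shifted, non-characteristic base point $g_0\in\p B_{\lambda r}$, where the kernel $|\nh d(\cdot,g_0)|^2$ is nondegenerate over the relevant region; this converts plain masses into weighted ones at comparable scales. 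Chaining this comparison with the weighted doubling obtained above produces \eqref{DC}. The subtle bookkeeping is to keep the structural enlargement factors inside the universal constant $C^\star$ while preserving the exponential factor as exactly $2^{Q+2\kappa}$; this is the step I expect to require the most care, and the reason the argument cannot be purely pointwise.

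For the converse I would run the comparison of Lemma \ref{L:trick} in the opposite direction to turn the unweighted doubling \eqref{DC2} into a doubling for the weighted solid integral, and then into a doubling $H(2r)\le CH(r)$ with $C$ controlled by $\overline C$. The passage from the solid weighted integral to $H$ uses that $H$ is nondecreasing (because $H'=\frac{Q-1}{r}H+2D\ge 0$) and that $r^{-(Q-1)}H(r)$ is nondecreasing by Corollary \ref{C:averages}, which together sandwich $\int_0^r H$ between multiples of $rH(r)$ (this is essentially \eqref{solsph}).

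Finally I would bound the frequency directly. Rewriting Lemma \ref{L:H'} as $\frac{d}{dr}\big(r^{-(Q-1)}H(r)\big)=2r^{-(Q-1)}D(r)$ and using that $D$ is nondecreasing, integration over $(r,2r)$ gives $(2r)^{1-Q}H(2r)-r^{1-Q}H(r)=2\int_r^{2r}t^{1-Q}D(t)\,dt\ge 2D(r)\int_r^{2r}t^{1-Q}\,dt$, hence $rD(r)\le C_Q H(2r)$ for a structural $C_Q$. Therefore $N(u,r)=\frac{rD(r)}{H(r)}\le C_Q\frac{H(2r)}{H(r)}$, and the $H$-doubling just established bounds the right-hand side by a universal multiple of $\overline C$, yielding $\|N(u,\cdot)\|_{L^\infty(0,R/2)}\le \overline C\,C'''$. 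As in the first part, the only genuinely nontrivial ingredient is the weighted/unweighted conversion through Lemma \ref{L:trick}; everything else is a differential-inequality manipulation of $H$ and $D$.
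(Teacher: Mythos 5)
Your first half is correct and is essentially the paper's own argument: the identity of Lemma \ref{L:H'} integrates to $H(2r)\le 2^{Q-1+2\kappa}H(r)$, the coarea formula upgrades this to the weighted solid doubling with the factor $2^{Q+2\kappa}$, and the passage to the unweighted inequality \eqref{DC} is done exactly as in the paper: a sub-mean-value bound for the maximum of $u^2$ on $B_{2r}$, chained through parts (ii) and (iii) of Lemma \ref{L:trick} at a shifted base point, with the choice $\alpha\Lambda^2=2$ so that the structural constants stay inside $C^\star$ (the paper phrases the mean-value step for $|u|$ followed by Cauchy--Schwarz rather than for $u^2$, an immaterial difference).

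The converse is where you genuinely depart from the paper, and where there is a gap. Your inequality $rD(r)\le C_Q H(2r)$, obtained from $\frac{d}{dr}\big(r^{1-Q}H(r)\big)=2r^{1-Q}D(r)$ and the monotonicity of $D$, is correct and is an attractive way to avoid the Caccioppoli inequality, which is what the paper uses instead ($D(r)\le C'r^{-2}\int_{B_{2r}}u^2\,dg$, then \eqref{DC2} once, then \eqref{lact} and \eqref{solsph}). But the "sandwich" you invoke does not exist: both monotonicity facts ($H$ nondecreasing, and $r^{1-Q}H(r)$ nondecreasing) yield only \emph{upper} bounds, $\int_0^rH\le rH(r)$ and $\int_0^rH\le\frac rQ H(r)$ respectively; a lower bound $\int_0^rH\ge c\,rH(r)$ is essentially equivalent to the doubling of $H$ you are trying to prove, so assuming it begs the question. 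The correct lower bound sits at a doubled scale, $\int_{2r}^{4r}H\ge 2rH(2r)$, so converting the solid-integral doubling into $H(2r)\le CH(r)$ forces a comparison of scales $4r$ and $r$, i.e., two applications of \eqref{DC2}. What your argument then delivers is $\|N(u,\cdot)\|_{L^\infty}\le C\,\overline C^{\,2}$ on an interval of the form $(0,R/4)$: this gives the qualitative conclusion $N\in L^\infty$, but not the stated linear bound $\|N(u,\cdot)\|_{L^\infty(0,R/2)}\le \overline C C'''$ (since $\overline C\ge 1$ always, $\overline C^{\,2}$ is genuinely weaker). The repair within your scheme is to bound $H(2r)$ without jumping to scale $4r$: by homogeneity, $H(2r)\le C(2r)^{Q-1}\sup_{\overline B_{2r}}u^2$, then apply \eqref{lact} at scale $2r$, \eqref{DC2} once, \eqref{lact} again at scale $r$, and \eqref{solsph}, which gives $H(2r)\le C\overline C H(r)$ for $0<r<R/2$ and restores both the linear dependence and the stated range --- or simply replace this whole step by the Caccioppoli route as the paper does.
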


\begin{proof}
We begin by rewriting the conclusion of Lemma \ref{L:H'} in the following way
\begin{equation}\label{H'2}
\frac{d}{dr} \log \frac{H(t)}{t^{Q-1}} = 2 \frac{N(t)}{t},\ \ \ \ \ 0<t<R.
\end{equation}
Integrating \eqref{H'2} between $r$ and $2r$, with $0<r<R/2$, we find
\[
\log \frac{H(2r)}{2^{Q-1} H(r)} = 2 \int_r^{2r} N(t) \frac{dt}{t} \le 2 \kappa \log 2. 
\]
Exponentiating, we obtain
\begin{equation}\label{dH}
H(2t) \le H(t)\ 2^{Q -1+ 2\kappa},\ \ \ \ \ 0<t<R/2.
\end{equation}
We next fix $r\in (0,R/2)$, integrate the latter inequality over the interval $(0,r)$, and apply Federer's coarea formula
to find
\begin{equation}\label{DD}
\int_{B_{2r}} u^2 |\nh \rho|^2 dg' \le 2^{Q+2\kappa}  \int_{B_r} u^2 |\nh \rho|^2 dg'.
\end{equation}
Let now $0<r<R/2$. There exists and a universal $C'>0$ such that
\begin{align*}
\int_{B_{2r}} u^2 dg' & \le C' r^{Q}  \underset{B_{2r}}{\sup}\ u^2
\end{align*}
At this point we use Lemma \ref{L:trick} above. Let $g\in \overline B_{2r}$ be such that
\[
\underset{B_{2r}}{\sup}\ u^2 = u(g)^2.
\]
Since $u$ is harmonic, for every $\alpha>0$ we have by \eqref{smvf} 
\[
|u(g)| \le M_{\alpha r}(|u|)(g).
\]
By (ii) in Lemma \ref{L:trick} we can find $g_0\in \p B_{\lambda \alpha r}$ such that
\[
M_{\alpha r}(|u|)(g) \le C M_{\Lambda \alpha r}(|u|)(g_0).
\]
By (iii) in Lemma \ref{L:trick} we have
\[
M_{\Lambda \alpha r}(|u|)(g_0) \le C M_{\Lambda^2 \alpha r}(|u|)(e).
\]
From this chain of inequalities and the Cauchy-Schwarz inequality we conclude for some universal $C>0$
\begin{equation}\label{lact}
r^{-Q} \int_{B_{2r}} u^2  dg'  \le \underset{B_{2r}}{\sup}\ u^2 =  u(g)^2 \le C  r^{-Q} \int_{B_{\Lambda^2 \alpha r}} u^2 |\nabla_H \rho|^2 dg',\ \ \ \ 0<r<R/(\Lambda^2 \alpha).
\end{equation}
If we now choose $\alpha>0$ such that $\alpha \Lambda^2 = 2$, we obtain from \eqref{lact} and \eqref{DD}.
\[
\int_{B_{2r}} u^2  dg'  \le C 2^{Q+2\kappa}  \int_{B_r} u^2 |\nh \rho|^2 dg',\ \ \ \ \ 0<r<R/2.
\]
To reach the desired conclusion \eqref{DC}, all is left at this point is to observe that, since $|\nh \rho|^2$ is homogeneous of degree zero in $\bG$, there exists a universal constant $C'>0$ such that
\[
|\nh \rho(g)|^2 \le C',\ \ \ \ \ \text{for every}\ g\in \bG\setminus \{e\}.
\] 

We now establish the opposite implication in the proposition. Suppose that \eqref{DC2} hold. The subelliptic Caccioppoli inequality, see \cite{CGL}, and the hypothesis \eqref{DC2},
 give for a universal $C'>0$ and every $0<r<R/2$,
\begin{equation}\label{lact2}
D(r) \le \frac{C'}{r^2} \int_{B_{2r}} u^2 dg' \le \frac{\overline C C'}{r^2} \int_{B_{r}} u^2 dg'.
\end{equation}
We now use \eqref{lact} (with $\alpha \Lambda^2 = 2$ and $2r$ replaced by $r$), to reach the conclusion that
\[
D(r) \le \frac{\overline C C''}{r^2} \int_{B_{r}} u^2 |\nabla_H \rho|^2 dg'.
\]
At this point, we use the harmonicity on $u$ one more time and observe that, thanks to \eqref{solsph} in  Corollary \ref{C:averages}, we have
\begin{align*}
\int_{B_{r}} u^2 |\nabla_H \rho|^2 dg' \le \frac{r}{Q} H(r).
\end{align*}
Substituting into \eqref{lact2} we finally obtain for $0<r<R/2$
\[
N(r) \le \overline C C''',
\]
for a universal $C'''>0$. This proves $||N(u,\cdot)||_{L^\infty(0,R/2)} \le \overline C C'''$, thus completing the proof.

\end{proof}

As it is by now well-known, see \cite{GLin1}, \cite{GLin2}, the \emph{doubling condition} \eqref{DD} has deep implications on the order of vanishing of a harmonic function at one point. Let us introduce the relevant definition.

\begin{dfn}\label{D:infty}
We say that a harmonic function $u$ in $B_R(g_0)$ vanishes to infinite order at $g_0\in \bG$ if for every $p\in \mathbb N$ one has
\[
\int_{B_r(g_0)} u^2 dg = O(r^p),\ \ \ \ \text{as}\ r\to 0^+.
\]
\end{dfn}

We have the following result.

\begin{thrm}\label{T:sucp}
Let $u$ be a harmonic function in $B_R(g_0)$, and suppose that $N_{g_0}(u,\cdot) \in L^\infty(0,R)$. If $u$ vanishes of infinite order at $g_0$, then $u\equiv 0$ in $B_{R}(g_0)$.
\end{thrm}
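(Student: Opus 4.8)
The plan is to deduce the statement directly from the doubling inequality \eqref{DC} of Theorem \ref{T:Nb}, which is available precisely because $N_{g_0}(u,\cdot)\in L^\infty(0,R)$. First I would reduce to the case $g_0=e$. Setting $v=u\circ L_{g_0}$, the left-invariance of the Haar measure gives $\int_{B_r(e)}v^2\,dg=\int_{B_r(g_0)}u^2\,dg$, so infinite order vanishing of $u$ at $g_0$ in the sense of Definition \ref{D:infty} is the same as infinite order vanishing of $v$ at $e$; moreover $v$ is harmonic and, by \eqref{Ninv}, $N(v,\cdot)=N_{g_0}(u,\cdot)$. Hence there is no loss in assuming $g_0=e$ and writing $N(u,r)$, $B_r$, etc.

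With $\kappa=\|N(u,\cdot)\|_{L^\infty(0,R)}$, Theorem \ref{T:Nb} furnishes a universal $C^\star>0$ with
\[
\int_{B_{2r}}u^2\,dg\le C^\star 2^{Q+2\kappa}\int_{B_r}u^2\,dg,\qquad 0<r<R/2.
\]
Writing $C=C^\star 2^{Q+2\kappa}$ and $\rho=2r$, this reads $\int_{B_\rho}u^2\le C\int_{B_{\rho/2}}u^2$ for every $\rho\in(0,R)$. The heart of the argument is to iterate this downward: fixing any $\rho_0\in(0,R)$ and setting $\rho_k=\rho_0/2^k$, a telescoping product yields
\[
\int_{B_{\rho_k}}u^2\,dg\ge C^{-k}\int_{B_{\rho_0}}u^2\,dg,\qquad k\in\mathbb N.
\]
Since $C^{-k}=(\rho_k/\rho_0)^{\log_2 C}$ and $r\mapsto\int_{B_r}u^2$ is nondecreasing, interpolating between consecutive $\rho_k$ converts this into a genuine power lower bound
\[
\int_{B_r}u^2\,dg\ge c\,r^\beta,\qquad 0<r<\rho_0,\quad \beta:=Q+2\kappa+\log_2 C^\star,
\]
with $c>0$ depending on $\rho_0$ and $\int_{B_{\rho_0}}u^2$, provided the latter is strictly positive.

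The conclusion then follows by contradiction. If $u$ vanishes to infinite order at $e$, then by Definition \ref{D:infty} we may pick an integer $p>\beta$ with $\int_{B_r}u^2=O(r^p)$ as $r\to 0^+$; together with the lower bound this forces $c\,r^\beta\le C_p r^{p}$, i.e. $c\le C_p r^{p-\beta}\to 0$, which is impossible unless $c=0$. Therefore $\int_{B_{\rho_0}}u^2=0$, so $u\equiv 0$ in $B_{\rho_0}$. Since the downward iteration from $\rho_0$ only invokes \eqref{DC} at radii below $\rho_0<R$, it remains valid for $\rho_0$ arbitrarily close to $R$, and hence $u\equiv 0$ in all of $B_R$. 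The only point demanding care is the bookkeeping in passing from the discrete estimate along $\{\rho_k\}$ to the continuous bound $\int_{B_r}u^2\ge c\,r^\beta$, together with the verification that the exponent $\beta$ is independent of $\rho_0$; all the substantive work has already been absorbed into the doubling inequality of Theorem \ref{T:Nb}, so I do not expect a genuine obstacle beyond this.
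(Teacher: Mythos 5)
Your proof is correct and follows essentially the same route as the paper: reduction to $g_0=e$ via \eqref{Ninv}, followed by dyadic iteration of the doubling inequality \eqref{DC} from Theorem \ref{T:Nb} played against Definition \ref{D:infty}. The paper's bookkeeping differs only cosmetically (it normalizes by powers of $|B_{2^{-\ell}R_0}|$ and chooses $\gamma$ with $K2^{-\gamma Q}=1$ rather than extracting your explicit lower bound $\int_{B_r}u^2\ge c\,r^{\beta}$), so the two arguments are the same in substance.
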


\begin{proof}
By left-translation, see \eqref{Ninv} above, we can assume without loss of generality that $g_0 = e\in \bG$. Let  $\kappa = ||N_{g_0}(u,\cdot)||_{L^\infty(0,R)}$, and denote by $K = C^\star 2^{Q+2\kappa}$ the constant in \eqref{DC} in Theorem \ref{T:Nb}. Fix $R_0<R$. We thus have 
\begin{align*}
\int_{B_{R_0}} u^2 dg & \le K  \int_{B_{2^{-1}R_0}} u^2 dg \le ... \le K^\ell \int_{B_{2^{-\ell}R_0}} u^2 dg
\\
& = K^\ell |B_{2^{-\ell} R_0}|^\gamma |B_{2^{-\ell}R_0}|^{-\gamma} \int_{B_{2^{-\ell}R_0}} u^2 dg
\\
& = (K 2^{-\gamma Q})^\ell \omega^\gamma R_0^{\gamma Q} |B_{2^{-\ell}R_0}|^{-\gamma} \int_{B_{2^{-\ell}R_0}} u^2 dg,
\end{align*}
where $\gamma>0$ is arbitrary and $\omega>0$ is a universal constant. Now we choose $\gamma$ so that $K 2^{-\gamma Q}=1$. This choice gives
\[
\int_{B_{R_0}} u^2 dg  \le C R_0^{\gamma Q} |B_{2^{-\ell}R_0}|^{-\gamma} \int_{B_{2^{-\ell}R_0}} u^2 dg\  \longrightarrow\ 0,
\]
as $\ell \to \infty$ since, by assumption, $u$ vanishes to infinite order at $e$. This shows that $u\equiv 0$ in $B_{R_0}$. By the arbitrariness of $R_0<R$ we conclude that $u\equiv 0$ in $B_R$.

\end{proof}

\begin{dfn}[Unique continuation property]\label{D:sucp}
We say that the operator $\Delta_H$ has the \emph{strong unique continuation property (sucp)}, if $u\equiv 0$ is the only harmonic function that can vanish to infinite order according to Definition \ref{D:infty} in a connected open set $\Om\subset \bG$. We say that $\Delta_H$ has the \emph{weak unique continuation property (wucp)}, if $u\equiv 0$ is the only harmonic function which can vanish in an open subset $\omega\subset \Om$ of a connected open set $\Om\subset \bG$.  
\end{dfn}

\begin{rmrk}[A smooth function with unbounded frequency]\label{R:uf}
Here, we provide an example of a smooth function in a Carnot group $\bG$ with unbounded frequency. For $\ve>0$, let 
\[
u(g) = \begin{cases}
\exp\left(-\frac{1}{\rho(g)^\ve}\right), \ \ \  \ g\in \bG\setminus \{e\},
\\
0,\ \ \ \ \ \ \ \ \ g = e.
\end{cases}
\] 
The function $u\in C^\infty(\bG)$ solves the equation 
\begin{equation}\label{nhs}
\Delta_H u = F,
\end{equation}
where with $f(t) =  \exp\left(-\frac{1}{t^\ve}\right)$ if $t\not= 0$, and $f(0) = 0$, we have let, see Proposition \ref{P:radial},
\[
F = |\nh \rho|^2\left\{f''(\rho) + \frac{Q-1}{\rho} f'(\rho)\right\}.
\]
The natural frequency associated with the equation \eqref{nhs} is $N(r) = r I(r)/H(r)$, where $H(r)$ is defined as in \eqref{H} above, whereas
\[
I(r) = \int_{B_r} \left(|\nh u|^2 + F u\right) dg.
\]
Similarly to Proposition \ref{P:beauty} we can prove
\begin{equation}\label{I}
I(r)  = \int_{S_r} u \frac{Zu}{r}
 |\nabla_H\rho| d\sigma_H.
\end{equation}
Since $\rho$ in \eqref{rho} is homogeneous of degree one, we obtain from the chain rule $Zu = f'(\rho) Z\rho = f'(\rho) \rho$. We thus find from \eqref{I}
\[
I(r) = \frac{\ve}{r^{\ve+1}}  \exp\left(-\frac{2}{r^\ve}\right) \int_{S_r} |\nh \rho| d\sigma_H,
\]
whereas one easily sees that
\[
H(r) = \exp\left(-\frac{2}{r^\ve}\right) \int_{S_r} |\nh \rho| d\sigma_H.
\]
In conclusion,
\[
N(r) = \frac{rI(r)}{H(r)} = \frac{\ve}{r^{\ve}},
\]
which shows that for no $R_0>0$ one can possibly have $N\in L^\infty(0,R_0)$. The unboundedness of the frequency in this example is not surprising since if for some $R_0>0$ we had $N\in L^\infty(0,R_0)$, then from the equation
\[
\frac{d}{dr} \log \frac{H(t)}{t^{Q-1}} = 2 \frac{N(t)}{t},
\]
which is still presently valid with $N(t) = t I(t)/H(t)$, proceeding as in the proof of Theorem \ref{T:Nb} we would obtain a doubling condition as in \eqref{DD} above. But this contradicts the fact that $u$ vanishes to infinite order at $e$, without being identically equal to zero in a full neighborhood of $e$, see Theorem \ref{T:sucp}. If we write
\begin{equation}\label{V}
V = \frac{F}{u} = |\nh \rho|^2\frac{f''(\rho) + \frac{Q-1}{\rho} f'(\rho)}{f(\rho)} = \frac{\ve^2}{\rho^{2+2\ve}}\left(1+\frac{Q-2-\ve}{\ve} \rho^\ve\right) |\nh \rho|^2,
\end{equation}
then this example also shows that, as far as the \emph{sucp} is concerned, the \emph{inverse square potential} condition
\[
|V| \le \frac{C}{\rho^2}|\nh \rho|^2,
\]
is sharp for the operator $-\Delta_H + V$. The function $u$ is in fact a solution of $\Delta_H u = V u$, with a $V$, given by \eqref{V}, that barely violates this condition. We also note that this example shows that, for any $1\le p<Q/2$ there is a potential $V\in L^p_{loc}(\bG)$ for which the \emph{sucp} fails for solutions of $\Delta_H u = V u$. It suffices in fact to choose $0<\ve < \frac{Q}{2p} -1$ in \eqref{V}. For more surprising negative results in this direction the reader should consult the paper \cite{Ba}, whereas for some positive results in the framework of the Heisenberg group $\Hn$ the reader should see \cite{GL}.
  
\end{rmrk}

Clearly, the \emph{sucp} implies the \emph{wucp}. Remarkably, the question of the \emph{sucp}, or even that of the \emph{wucp},  for sub-Laplacians on Carnot groups is a fundamental  question which is largely open. In the Heisenberg group $\Hn$, or more in general in any group of Heisenberg type, it is known that any sub-Laplacian is real analytic hypoelliptic (this follows from the fact, respectively established in \cite{F1} and \cite{K}, that their fundamental solutions are real analytic outside of the singularity), and therefore harmonic functions cannot vanish to infinite order at one point (in the standard sense of vanishing with derivatives of all orders) of a connected open set, unless they vanish identically. However, even for $\Hn$ a quantitative proof of the \emph{sucp} for harmonic functions which does not use real analyticity is not presently known. Thus, for instance, it is still not known whether, without using the real analyticity, harmonic functions in $\Hn$ fall under the scope of Theorem  \ref{T:sucp} above, i.e., they have locally uniformly bounded frequency. 

In the framework of Carnot groups of step two, Helffer had proved in \cite{He} that if a sub-Laplacian in $\bG$ is real-analytic hypoelliptic, then $\bG$ must be a Metivier group. We recall that a Carnot group of step $2$ is said a Metivier group if the Kaplan mapping is non-degenerate, see Definition \ref{D:Metivier} below. Thanks to a result of Metivier \cite{Me2}, see also \cite{Me1}, it is known that if $\bG$ is a Metivier group, then a sub-Laplacian $\Delta_H$ is real-analytic hypoelliptic if and only if it is hypoelliptic. Since by H\"ormander's theorem every sub-Laplacian is hypoelliptic, it follows than in Metivier groups all sub-Laplacians are real-analytic hypoellitpic, and therefore they possess  the standard \emph{sucp}.  However, outside groups of step $2$ there is (to the best of  the authors' knowledge) no known example of a real-analytic sub-Laplacian. It is plausible (for this conjecture see L. Rothschild in \cite{Ro}) that no sub-Laplacian on a Carnot group which is not a Metivier group should be real-analytic hypoelliptic, but this question seems to be open. 

In view of the above considerations, the problem of the boundedness of the frequency, and the deeply connected \emph{sucp} according to Definition \ref{D:sucp}, acquire a fundamental relevance, especially for groups which are not of Metivier type. In Section \ref{S:energy}
 we present some partial progress in this direction.

\section{Global consequence of the boundedness of the frequency}\label{S:global}

In this section we turn to the question of understanding what happens when the frequency $r\to N(u,r)$ of an entire (harmonic) function $u$ on a Carnot group $\bG$ is globally bounded, i.e., it is bounded on $(0,\infty)$. In the special setting of the Heisenberg group $\Hn$ this question has been recently studied in the paper \cite{LTY}, and our Theorem \ref{T:bf} generalizes Theorem 1.1 in their paper to all Carnot groups, except that our proof is much shorter. 
For the definition of stratified polynomials in a Carnot group we refer the reader to \cite{FS} and \cite{BLU}.

\begin{thrm}\label{T:bf}
Let $\bG$ be a Carnot group and suppose that $\Delta_H u = 0$ in $\bG$. If there exists $\kappa \ge 0$ such that $N(r) = N(u,r)\le \kappa$ for every $r>0$, then $u$ is a stratified harmonic polynomial of degree $\ell \le [\kappa]$, with $[\kappa]$ equal to the integral part of $\kappa$ (when $\kappa>0$, we assume that for every $r>0$, $u\not\equiv 0$ in $B_r$).
\end{thrm}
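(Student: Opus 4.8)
The plan is to turn the one-sided frequency bound $N(u,r)\le\kappa$ into a polynomial growth estimate for $u$, and then to promote that growth estimate, via interior subelliptic estimates and the dilation structure, into the vanishing of all sufficiently high order horizontal derivatives of $u$, which will force $u$ to be a stratified polynomial of the expected degree.

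First I would recover a growth estimate for the height. The nondegeneracy assumption (that $u\not\equiv 0$ in every $B_r$) guarantees, via Remark \ref{R:nonvan}, that $H(r)\ne 0$ so that $N(r)$ is well defined on $(0,\infty)$. Rewriting Lemma \ref{L:H'} in the form \eqref{H'2}, $\frac{d}{dr}\log\frac{H(r)}{r^{Q-1}}=\frac{2N(r)}{r}\le\frac{2\kappa}{r}$, and integrating from a fixed $r_0>0$ to $r>r_0$ gives $H(r)\le C\,r^{Q-1+2\kappa}$. Combining this with \eqref{solsph} in Corollary \ref{C:averages} yields the weighted $L^2$-growth bound $\int_{B_r}u^2|\nh\rho|^2\,dg\le\frac{r}{Q}H(r)\le C\,r^{Q+2\kappa}$ for all large $r$. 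Next I would pass to a pointwise bound: the sub-mean-value inequality \eqref{lact} from the proof of Theorem \ref{T:Nb} (with $\alpha\Lambda^2=2$, after relabeling) gives $\sup_{B_r}u^2\le C\,r^{-Q}\int_{B_r}u^2|\nh\rho|^2\,dg$, and hence $\sup_{B_r}|u|\le C\,r^{\kappa}$ for all large $r$.

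The core step is then to exploit interior subelliptic estimates together with the group dilations. Let $Y_1,\dots,Y_m$ be the right-invariant vector fields with $Y_i(e)=e_i$; since they commute with the left-invariant operator $\deh$, the functions $Y_i u$ are again harmonic, and since the $\delta_\la$ are automorphisms of $\bG$ the $Y_i$ are homogeneous of degree one with respect to \eqref{dilG}. The standard interior gradient estimate $\sup_{B_1}|Y_i v|\le C\sup_{B_2}|v|$ for $\deh$-harmonic $v$, rescaled by $\delta_r$ and iterated $j$ times, then yields $\sup_{B_r}|Y_{i_1}\cdots Y_{i_j}u|\le C_j\,r^{-j}\sup_{B_{2^j r}}|u|\le C\,r^{\kappa-j}$. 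Choosing $j=[\kappa]+1$ makes the right-hand side tend to $0$ as $r\to\infty$, so that $Y_{i_1}\cdots Y_{i_j}u\equiv 0$ for every word of length $[\kappa]+1$ in the horizontal frame.

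Finally, the structural characterization of stratified polynomials (see \cite{BLU}, \cite{FS}) shows that a smooth function annihilated by all horizontal derivatives of order $[\kappa]+1$ is a stratified polynomial of homogeneous degree $\le[\kappa]$; being harmonic, $u$ is then a stratified harmonic polynomial, and the degree bound $\ell\le[\kappa]$ follows either directly from the order of the annihilating derivatives or, as a robust alternative, from the growth estimate $\sup_{B_r}|u|\le C r^{\kappa}$, since the top homogeneous component of a stratified polynomial of degree $d$ grows like $r^{d}$ on $B_r$. I expect the two genuinely non-elementary ingredients to be the iterated interior gradient estimate in the right-invariant Hörmander frame and the polynomial characterization via vanishing of high order horizontal derivatives; the remaining steps are direct consequences of the monotonicity machinery already in place (Lemma \ref{L:H'}, Corollary \ref{C:averages}, and \eqref{lact}).
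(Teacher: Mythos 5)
Your first half coincides with the paper's own proof: the paper likewise integrates \eqref{H'2} to obtain $H(R)\le H(r)\left(R/r\right)^{Q-1+2\kappa}$, converts this into the weighted $L^2$ growth bound (via the co-area formula rather than your appeal to \eqref{solsph} --- an immaterial difference), and then runs exactly the Lemma \ref{L:trick}/\eqref{lact} argument to reach $||u||_{L^\infty(B_R)}\le \tilde C R^{\kappa}$, which is \eqref{crucialbound}. The divergence is in the endgame: at that point the paper simply cites the asymptotic Liouville theorem (Theorem 5.8.8 in \cite{BLU}) to conclude that a harmonic function of polynomial growth of order $\kappa$ is a stratified polynomial of degree at most $[\kappa]$, whereas you reprove this Liouville statement by hand, using right-invariant horizontal fields $Y_i$ (harmonicity of $Y_i u$ via $[Y_i,\Delta_H]=0$, $\delta_\lambda$-homogeneity of degree one of $Y_i$, and an iterated, rescaled gradient estimate) to kill all derivatives of order $[\kappa]+1$. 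Your route is sound and more self-contained, and it closely parallels what the paper itself does in Theorem \ref{T:N=k}; note, however, that there the paper avoids your right-invariant detour entirely: the Cauchy-Schauder estimates of Lemma \ref{L:Harmonic} bound iterated \emph{left}-invariant horizontal derivatives $X_{j_1}\cdots X_{j_s}u$ of a harmonic function directly in terms of $\sup |u|$, so one never needs the intermediate derivatives to be harmonic, and the stratified Taylor formula of \cite{FS} (stated for left-invariant fields) then applies verbatim. If you keep the right-invariant frame, two points you pass over in silence need a word: the interior estimate $\sup_{B_1}|Y_i v|\le C \sup_{B_2}|v|$ is not one of the standard left-invariant subelliptic estimates (it follows by expressing $Y_i$ as a combination of left-invariant derivatives with polynomial coefficients on a fixed ball), and the characterization of polynomials through vanishing of high-order horizontal derivatives in \cite{FS}, \cite{BLU} is likewise stated for left-invariant fields, so it must be transported through the inversion $g\mapsto g^{-1}$ (equivalently, the opposite group). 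Neither is a genuine gap, but both must be addressed; the cleanest repair is simply to replace your right-invariant iteration by a direct application of Lemma \ref{L:Harmonic}, exactly as in the paper's proof of Theorem \ref{T:N=k}.
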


\begin{proof}
If $\kappa = 0$, then $u\equiv$ const. on $\bG$ and we are done. Suppose $\kappa >0$. By Lemma \ref{L:H'} we know that
\[
\frac{d}{dt} \log \frac{H(t)}{t^{Q-1}} = 2 \frac{N(t)}{t} \le \frac{2\kappa}{t},\ \ \ \ 0<t<\infty.
\]
Integrating in $t\in (r,R)$ we find
\begin{equation}\label{gro}
H(R)\le H(r) \left(\frac Rr\right)^{Q-1+2\kappa},\ \ \ \ 0<r<R<\infty.
\end{equation}
Using Federer's co-area formula we easily obtain from \eqref{gro}
\begin{equation}\label{gro2}
\int_{B_R} u^2 |\nh \rho|^2 dg \le \left(\frac Rr\right)^{Q+2\kappa} \int_{B_r} u^2 |\nh \rho|^2 dg,\ \ \ \ 0<r<R<\infty.
\end{equation}
This gives for any $R>1$
\begin{align}\label{gro3}
& \int_{B_R} u^2 |\nh \rho|^2 dg \le R^{Q+2\kappa} \int_{B_1} u^2 |\nh \rho|^2 dg
\\
& \le C R^{Q+2\kappa} ||u||^2_{L^\infty(B_1)}.
\notag
\end{align}
At this point we use Lemma \ref{L:trick} above. For a fixed $R>0$, let $g\in \overline B_R$ such that
\[
\underset{B_R}{\sup}\ |u| = |u(g)|.
\]
Using the harmonicity of $u$, and arguing as in the proof of Theorem \ref{T:Nb} above, we conclude that
\begin{equation}\label{wow}
\underset{B_R}{\sup}\ |u|  \le C^2 M_{\Lambda^2 R}(|u|)(e) \le C^\star \left(R^{-Q} \int_{B_{\Lambda^2 R}} u^2 |\nabla_H \rho|^2 dg\right)^{1/2},
\end{equation}
where $\Lambda>0$ is the universal constant in Lemma \ref{L:trick}.
From \eqref{gro3} and \eqref{wow} we finally have for a constant $\tilde C = \tilde C(\bG,\kappa)>0$
\begin{equation}\label{crucialbound}
||u||_{L^\infty(B_R)} \le \tilde C R^\kappa ||u||^2_{L^\infty(B_1)},\ \ \ \ \text{for every}\ R>\Lambda^{-2}.
\end{equation}
The desired conclusion now follows from \eqref{crucialbound}, and from the asymptotic Liouville Theorem 5.8.8 in \cite{BLU}.

\end{proof}

\section{First variation of the energy and discrepancy}\label{S:energy}

%In terms of the frequency we can rewrite \eqref{H'} as follows
%\begin{equation}\label{H'fr}
%\frac{d}{dr} \log \frac{H(r)}{r^{Q-1}} = 2 \frac{N(r)}{r}.
%\end{equation}

We next want to study further properties of $N(r)$. With this objective in mind it will be important to
compute the first variation $D'(r)$ of the energy. We will use the following
Rellich type identity established in Corollary 3.3 in \cite{GV}.

\begin{prop}\label{P:Rellich}
Let $\bG$ be a Carnot group of arbitrary step, and suppose that $\Om\subset \bG$ is a bounded open set of class $C^1$. For $u\in \Gamma^{2}(\overline{\Om})$ ($\Gamma^2$ is the Folland-Stein class, see \cite{F2}) one has
\begin{align*}
&  2 \int_{\partial{\Om}} \zeta u <\nh u, \bN_H> dH_{N-1}
 + \int_\Om  \emph{div}_{\bG} \zeta |\nh u|^2 dg
\\
&  - 2 \sum_{i=1}^m \int_\Om X_iu [X_i,\zeta]u dg 
 - 2 \int_\Om \zeta u \Delta_H u dg
\notag\\
& = \int_{\partial{\Om}} |\nh u|^2 <\zeta,\nu>
dH_{N-1}, \notag
\end{align*}
where $\zeta$ is a $C^1$ vector field on $\bG$, and $ \bN_H$ is given by \eqref{Nh}.
\end{prop}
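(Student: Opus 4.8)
The plan is to deduce the identity from a single application of the ordinary (Euclidean) divergence theorem on $\Om$, applied to the vector field $W = \uh\, \zeta$, combined with one horizontal integration by parts. First I would write, by the divergence theorem on the $C^1$ domain $\Om$,
\[
\int_{\p\Om} \uh \langle \zeta,\nu\rangle\, dH_{N-1} = \IO \operatorname{div}_\bG(\uh\,\zeta)\, dg = \IO \uh\, \operatorname{div}_\bG \zeta\, dg + \IO \zeta(\uh)\, dg,
\]
where $\operatorname{div}_\bG \zeta = \sum_{j=1}^N \p_{x_j}\zeta_j$ is the ordinary divergence of $\zeta$ (computed against the Haar, i.e.\ Lebesgue, measure). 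This display already produces the right-hand side of the claimed identity together with the $\IO \operatorname{div}_\bG\zeta\,\uh$ term, so the whole matter reduces to identifying $\IO \zeta(\uh)\,dg$ with the remaining boundary and interior integrals.

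Next I would expand $\zeta(\uh)$ using the commutation between $\zeta$ and the horizontal fields. Since $\uh = \sum_i (X_iu)^2$, one has $\zeta(\uh) = 2\sum_i (X_iu)\,\zeta(X_iu)$, and writing the composition of first-order operators as $\zeta X_i = X_i\zeta + [\zeta,X_i]$ gives $\zeta(X_iu) = X_i(\zeta u) - [X_i,\zeta]u$. Hence
\[
\IO \zeta(\uh)\, dg = 2\sum_i \IO (X_iu)\, X_i(\zeta u)\, dg - 2\sum_i \IO (X_iu)\,[X_i,\zeta]u\, dg .
\]
The second sum is precisely (twice) the commutator term appearing in the statement, so the core of the computation is the first sum.

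For that first sum I would integrate by parts in each $X_i$, exploiting that in a Carnot group $X_i^* = -X_i$, i.e.\ every left-invariant horizontal field is divergence free. Applying $\int_\Om X_i\psi\, dg = \int_{\p\Om}\psi\langle X_i,\nu\rangle\, dH_{N-1}$ to $\psi = (\zeta u)(X_iu)$ and summing over $i$ yields
\[
\sum_i \IO (X_iu)\,X_i(\zeta u)\, dg = \int_{\p\Om}(\zeta u)\langle \nh u,\bN_H\rangle\, dH_{N-1} - \IO (\zeta u)\,\deh u\, dg,
\]
where I used $\bN_H = \sum_i \langle X_i,\nu\rangle X_i$ from \eqref{Nh} to recognize $\sum_i (X_iu)\langle X_i,\nu\rangle = \langle\nh u,\bN_H\rangle$, together with $\sum_i X_i^2 u = \deh u$. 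Substituting the last two displays into the divergence-theorem identity and collecting terms gives exactly the asserted Rellich identity.

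The main obstacle is not the algebra but the regularity. For $u\in\Gamma^2(\overline\Om)$ the intermediate quantity $X_i(\zeta u)$ involves derivatives of $u$ along the non-horizontal components of $\zeta$, followed by an extra horizontal derivative, and these are \emph{not} a priori controlled by the Folland–Stein norm; the formal integration by parts above is therefore only legitimate for smoother $u$. I would resolve this by first proving the identity for $u\in C^\infty(\overline\Om)$, where every step is classical, and then passing to the limit by approximating $u$ in $\Gamma^2(\overline\Om)$. The point is that the \emph{final} expression involves $u$ only through $\nh u$, $\deh u$, $\zeta u$, and the first-order quantity $[X_i,\zeta]u$, all of which are stable under such approximation, while the boundary integrals make sense because $\p\Om$ is $C^1$ and $\nh u$ admits a well-defined trace. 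The bookkeeping of the commutators $[X_i,\zeta]$, which are genuine first-order operators since $\zeta$ is merely $C^1$, must be tracked carefully, but they enter only through the manifestly convergent term $\IO (X_iu)[X_i,\zeta]u\,dg$.
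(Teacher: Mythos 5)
Your argument is correct; note first that there is no internal proof to compare it against, since the paper states Proposition \ref{P:Rellich} as a quotation of Corollary 3.3 in \cite{GV} and gives no derivation of its own. Your route is the standard Rellich--Pohozaev computation, and the algebra closes exactly: the divergence theorem for Haar (Lebesgue) measure applied to $\uh\,\zeta$ gives $\int_{\p\Om}\uh\,\langle\zeta,\nu\rangle\,dH_{N-1}=\IO\uh\,\operatorname{div}_{\bG}\zeta\,dg+\IO\zeta(\uh)\,dg$; the commutation $\zeta(X_iu)=X_i(\zeta u)-[X_i,\zeta]u$ is right; and since every left-invariant $X_i$ is divergence free, summing the horizontal integrations by parts turns $2\sum_i\IO X_iu\,X_i(\zeta u)\,dg$ into $2\int_{\p\Om}\zeta u\,\langle\nh u,\bN_H\rangle\,dH_{N-1}-2\IO\zeta u\,\deh u\,dg$, which assembles into the stated identity with all signs correct. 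One point you use silently and should make explicit: the boundary pairing in \eqref{Nh} involves the \emph{Riemannian} unit normal, while your divergence theorem is the Euclidean one; the two computations agree because the flux form $\langle X_i,\nu\rangle\,dH_{N-1}$ is the restriction to $\p\Om$ of the interior product of $X_i$ with the volume form, hence depends only on that volume form, and the left-invariant Riemannian metric making $X_1,\dots,X_m$ orthonormal has the same volume as Lebesgue measure.

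On the regularity discussion: your plan (prove the identity for $u\in C^\infty(\overline\Om)$, then approximate) is the right repair, but your claim that every term of the final identity is stable under $\Gamma^2$-approximation is not accurate in groups of step $r\ge 3$. The term $\zeta u$ involves derivatives of $u$ along the layers $V_3,\dots,V_r$, which have homogeneous order $\ge 3$ and are therefore not controlled by the Folland--Stein $\Gamma^2$ norm. This defect is really inherited from the statement itself, which tacitly assumes $\zeta u$ is meaningful for $u\in\Gamma^2(\overline\Om)$ and a general $C^1$ field $\zeta$; the clean formulation of your limiting argument is to restrict to those $u\in\Gamma^2(\overline\Om)$ for which $\zeta u$ exists and is continuous on $\overline\Om$, and to demand convergence of $\zeta u_\eps$ along with the horizontal quantities. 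In the paper's only application of the identity (Proposition \ref{P:D'}, where $\zeta=Z$ and $u$ is harmonic), hypoellipticity makes $u\in C^\infty$, so none of this is an obstruction there.
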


We will need the following elementary facts established in \cite{DG}.

\begin{lemma}\label{L:Zprop}
In a Carnot group $\bG$ the infinitesimal generator of group dilations $Z$ enjoys the following properties:
\begin{itemize}
\item[(i)]
\emph{div}$_\bG Z \equiv Q$.
\item[(ii)]
One has $[X_i,Z]=X_i, \quad i=1,...,m,$
\item[(iii)]
$\Delta_H(Zu) = Z(\Delta_H u) + 2 \Delta_Hu$, \emph{for any} $u\in C^{\infty}(\bG)$.
In particular, $Zu$ is harmonic if such is $u$.
\end{itemize}
\end{lemma}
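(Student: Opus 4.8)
The final statement to prove is Lemma~\ref{L:Zprop}, which collects three properties of the infinitesimal generator $Z$ of the non-isotropic dilations \eqref{dilG}. The plan is to verify each assertion using the structure of the stratification and the characterization \eqref{dZ} of $Z$, working in exponential coordinates adapted to the grading $\bg = V_1 \oplus \cdots \oplus V_r$.

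First I would fix graded coordinates: choose a basis of $\bg$ respecting the layers, with coordinates $(\xi_1,\dots,\xi_r)$, $\xi_j \in V_j$, so that the dilation acts as $\Delta_\lambda \xi = (\lambda \xi_1,\dots,\lambda^r \xi_r)$. Transferring via $\exp$, the generator $Z$ has the explicit form $Z = \sum_{j=1}^r j\, \xi_j \cdot \nabla_{\xi_j}$, i.e. a weighted Euler field where coordinates in layer $V_j$ carry weight $j$. For (i), computing $\operatorname{div}_\bG Z$ against the bi-invariant Haar measure $dg$ (which is Lebesgue measure in exponential coordinates) gives $\sum_{j=1}^r j\, m_j = Q$ directly from \eqref{cim}; alternatively, this follows from the fact that $\delta_\lambda$ scales the measure by $\lambda^Q$, so the flow of $Z$ has constant divergence $Q$.

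For (ii), the key point is that each horizontal vector field $X_i$ (with $e_i \in V_1$, weight one) is homogeneous of degree $-1$ with respect to $\{\delta_\lambda\}$: one has $X_i(u\circ \delta_\lambda) = \lambda\, (X_i u)\circ \delta_\lambda$. Differentiating this scaling relation in $\lambda$ at $\lambda = 1$, using \eqref{dZ} to express the derivative through $Z$, yields $X_i Z u - Z X_i u = X_i u$, which is exactly $[X_i,Z] = X_i$. For (iii), I would then iterate (ii): writing $\Delta_H = \sum_i X_i^2$, I compute $\Delta_H(Zu) = \sum_i X_i^2(Zu)$ and commute each $Z$ past the two copies of $X_i$ using $[X_i,Z]u = X_i u$. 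Each pass produces one extra factor of $X_i u$, so the two commutations across $X_i^2$ contribute $2 X_i^2 u$ in total, giving $\Delta_H(Zu) = Z(\Delta_H u) + 2\Delta_H u$; the final clause about harmonicity is immediate by setting $\Delta_H u = 0$.

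I expect the main obstacle to be purely organizational rather than conceptual: establishing the homogeneity relation $X_i(u\circ\delta_\lambda) = \lambda\,(X_i u)\circ\delta_\lambda$ cleanly, since it is the linchpin for both (ii) and (iii). This requires knowing that $X_i$, being left-invariant and generated by a degree-one element $e_i \in V_1$, intertwines correctly with the dilations — a fact that can be read off from the Baker--Campbell--Hausdorff formula \eqref{BCH} together with the graded action of $\Delta_\lambda$. Once that scaling identity is in hand, the commutator computations in (ii) and (iii) are routine differentiations and bracket manipulations. Since this lemma is cited from \cite{DG}, I would keep the argument brief, presenting the coordinate form of $Z$, the homogeneity of $X_i$, and the three short derivations, deferring fuller details to the reference.
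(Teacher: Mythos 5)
The paper offers no proof of this lemma at all: it is imported wholesale from \cite{DG} (``the following elementary facts established in \cite{DG}''), so there is no internal argument to compare yours against, and the only question is whether your blind proof is sound. It is. The coordinate expression $Z=\sum_{j=1}^r j\,\xi_j\cdot\nabla_{\xi_j}$ in exponential coordinates is correct, since by \eqref{dZ} the flow of $Z$ is $t\mapsto\delta_{e^t}$; and because the Haar measure is by definition the lift of Lebesgue measure on $\bg$, part (i) is either the direct computation $\sum_j j\,m_j=Q$ or, equivalently, Liouville's theorem applied to the scaling \eqref{cim}. The identity $X_i(u\circ\delta_\lambda)=\lambda\,(X_iu)\circ\delta_\lambda$, which you correctly single out as the linchpin, holds for exactly the reason you indicate: the grading gives $[\Delta_\lambda\xi,\Delta_\lambda\eta]=\Delta_\lambda[\xi,\eta]$, so by \eqref{BCH} each $\delta_\lambda$ is a group automorphism, whence $(\delta_\lambda)_*X_i$ is again left-invariant and equals $d\Delta_\lambda(e_i)=\lambda e_i$ at the identity, i.e.\ $(\delta_\lambda)_*X_i=\lambda X_i$. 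Differentiating your scaling identity at $\lambda=1$, and using \eqref{dZ} to identify the $\lambda$-derivative of $u\circ\delta_\lambda$ with $Zu$, gives $X_iZu=X_iu+ZX_iu$, which is (ii) with the sign convention $[X_i,Z]=X_iZ-ZX_i$ used in the statement; the two-fold commutation $X_i^2Z=X_i(ZX_i+X_i)=ZX_i^2+2X_i^2$, summed over $i=1,\dots,m$, is precisely (iii), and the harmonicity clause follows by setting $\Delta_Hu=0$. Your argument is thus a complete and correct substitute for the external citation.
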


\begin{prop}[First variation of the energy]\label{P:D'}
Let $\bG$ be a Carnot group of arbitrary step, and let $u$ be harmonic in $B_R$. Then, for every $0<r<R$ one has for a.e. $r\in (0,R)$,
\begin{align}\label{rellichBr}
D'(r) & = \frac{Q-2}{r} D(r) + 2 \int_{S_r} \frac{Z
u}{r} <\nh u, \nh \rho> \frac{1}{|\nabla \rho|}dH_{N-1}
\\
& = \frac{Q-2}{r} D(r) + 2 \int_{S_r} \frac{Z u}{r}
<\nh u, \bN_H> dH_{N-1}. 
\notag
\end{align}
\end{prop}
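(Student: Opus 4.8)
The plan is to apply the Rellich-type identity of Proposition \ref{P:Rellich} on the domain $\Omega = B_r$ with the specific choice of vector field $\zeta = Z$, the infinitesimal generator of the non-isotropic group dilations, and then to convert the two resulting boundary integrals into $D'(r)$ and the desired surface term by means of the coarea formula. The beauty of this choice is that $Z$ interacts with the horizontal vector fields in exactly the way recorded in Lemma \ref{L:Zprop}, so that every interior term in the Rellich identity collapses to a multiple of $D(r)$.

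First I would observe that, since $u$ is harmonic, the term $-2\int_{B_r} \zeta u\, \Delta_H u\, dg$ in Proposition \ref{P:Rellich} vanishes identically. Taking $\zeta = Z$, the two remaining interior contributions simplify via Lemma \ref{L:Zprop}: by (i) we have $\text{div}_\bG Z \equiv Q$, so that $\int_{B_r} \text{div}_\bG Z\, |\nabla_H u|^2\, dg = Q\, D(r)$; by (ii) we have $[X_i,Z] = X_i$, whence $\sum_{i=1}^m X_i u\,[X_i,Z]u = |\nabla_H u|^2$ and the corresponding term equals $-2\, D(r)$. Collecting these facts, the Rellich identity on $B_r$ reduces to
\[
2\int_{S_r} Zu\,\langle \nabla_H u, \bN_H\rangle\, dH_{N-1} + (Q-2)\, D(r) = \int_{S_r} |\nabla_H u|^2\,\langle Z,\nu\rangle\, dH_{N-1}.
\]

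Next I would identify the right-hand boundary integral with $r\, D'(r)$. Since $B_r = \{\rho<r\}$ and $\rho$ is smooth on $\bG\setminus\{e\}$, the Riemannian outer unit normal on $S_r$ is $\nu = \nabla\rho/|\nabla\rho|$, so $\langle Z,\nu\rangle = Z\rho/|\nabla\rho|$. Because $\rho$ is homogeneous of degree one, the Euler formula \eqref{E0} gives $Z\rho = \rho = r$ on $S_r$, hence $\langle Z,\nu\rangle = r/|\nabla\rho|$. On the other hand, the coarea formula applied to $D(r) = \int_{B_r}|\nabla_H u|^2\, dg$ yields, for a.e. $r$,
\[
D'(r) = \int_{S_r} \frac{|\nabla_H u|^2}{|\nabla\rho|}\, dH_{N-1},
\]
so the right-hand side above is exactly $r\, D'(r)$. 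Solving for $D'(r)$ produces the second displayed formula in the statement, and the first formula then follows immediately from the identity $\langle \nabla_H u, \bN_H\rangle = \langle \nabla_H u, \nabla_H\rho\rangle/|\nabla\rho|$, which is read off directly from the expression \eqref{Nhv} for $\bN_H$.

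The main difficulty lies not in the algebra but in the analytic justification of the last two maneuvers: differentiating under the coarea formula, and legitimately applying Proposition \ref{P:Rellich} on $B_r$, both require control of the behavior near the characteristic set of $S_r$, where $|\nabla\rho|$ may degenerate, and an argument that $r\mapsto D(r)$ is absolutely continuous so that $D'(r)$ exists for a.e. $r$. I expect this to be handled by a standard truncation/approximation scheme exploiting the smoothness of $\rho$ away from $e$ together with the $\Gamma^2$-regularity of the harmonic function $u$; once these measure-zero subtleties are dispatched, each step above is an exact identity.
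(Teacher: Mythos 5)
Your proposal is correct and follows essentially the same route as the paper's own proof: apply the Rellich identity of Proposition \ref{P:Rellich} with $\zeta = Z$ on $\Om = B_r$, simplify the interior terms via Lemma \ref{L:Zprop}, use $Z\rho = \rho = r$ on $S_r$ together with the coarea expression $D'(r) = \int_{S_r} |\nh u|^2 |\nabla \rho|^{-1}\, dH_{N-1}$, and read off the two displayed formulas from \eqref{Nhv}. The only difference is that you flag the analytic justifications (absolute continuity of $D$, degeneracy of $|\nabla\rho|$ on the characteristic set) explicitly, which the paper leaves implicit.
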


\begin{proof}
Let us begin with observing that the co-area formula gives
\[
D(r) = \int_0^r \int_{S_t} \frac{ \uh}{|\nabla \rho|} dH_{N-1} dt .
\]
This identity implies for a.e. $r\in (0,R)$
\[
D'(r) = \int_{S_r}  \frac{\uh}{|\nabla \rho|} dH_{N-1}.
\]
We now apply Proposition \ref{P:Rellich}, in which we take $u$ such that $\Delta_Hu=0$,
$\zeta = Z$, and $\Om = B_r$. Using Lemma \ref{L:Zprop}, which gives
div$_{\bG} Z= Q$, that $[X_i,Z]=X_i$, and that $Z \rho = \rho = r$
on $S_r$, we obtain
\begin{equation*}\label{rellich2}
r \int_{S_r} \frac{|\nh u|^2}{|\nabla \rho|}
dH_{N-1} = 2 \int_{S_r} Z u <\nh u, \bN_H> dH_{N-1}
 + (Q-2) \int_{B_r}  |\nh u|^2 dg.
\end{equation*}
This formula gives the desired conclusion.

\end{proof}

%Formula \eqref{rellich2} gives
%\begin{align}\label{N'/N}
%\frac{N'(r)}{N(r)} & = 2\ \frac{\int_{\partial{B_r}} \frac{Z u}{r}\
%<\nh u, \n^H> d\sigma_H }{\int_{\partial B_r} u \frac{Z u}{r}
%|\nabla^H\rho| d\sigma_H} - 2\ \frac{\int_{\partial B_r} u \frac{Z
%u}{r} |\nabla^H\rho| d\sigma_H}{\int_{\partial B_r} u^2
%|\nabla^H\rho| d\sigma_H}
%\end{align}

\begin{dfn}[Discrepancy]\label{D:discrep}
Given a function $u$ in a Carnot group $\bG$, we define the \emph{discrepancy} of $u$ at $e\in \bG$ as
\begin{equation}\label{E}
E_u = <\nh u,\nh \rho> - \frac{Zu}{\rho} |\nh \rho|^2.
\end{equation} 
The discrepancy of $u$ at any other $g_0\in \bG$ is defined as the discrepancy at $e$ of the function $v(g) = u(g_0^{-1} \circ g)$. 
\end{dfn}

We note explicitly that, using \eqref{E}, we can rewrite the first variation formula \eqref{rellichBr} in Proposition \ref{P:D'} as follows
\begin{equation}\label{D'2}
D'(r) = \frac{Q-2}{r} D(r) + 2 \int_{S_r} \left(\frac{Z
u}{r}\right)^2 \frac{|\nh \rho|^2}{|\nabla \rho|}dH_{N-1} + 2 \int_{S_r} \left(\frac{Z
u}{r}\right) E_u \frac{dH_{N-1}}{|\nabla \rho|}. 
\end{equation}

\begin{thrm}\label{T:D'gen}
Let $\bG$ be a Carnot group, and let $u$ be harmonic in $B_R$. Suppose in addition that $u$ has vanishing discrepancy at $e$ in $B_R$, i.e.,
\begin{equation}\label{aa}
E_u = <\nh u,\nh \rho> - \frac{Zu}{\rho} |\nh \rho|^2 \equiv 0.
\end{equation}
Then, for every $0<r<R$ one has for a.e. $r\in (0,R)$,
\begin{equation}\label{D'gen}
D'(r)  = \frac{Q-2}{r} D(r) + 2 \int_{S_r} \left(\frac{Z
u}{r}\right)^2 |\nh \rho| d\sigma_H.
\end{equation}
\end{thrm}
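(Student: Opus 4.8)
The plan is to read the result essentially straight off the first variation formula \eqref{D'2}, which we have already obtained from Proposition \ref{P:D'} by substituting the definition \eqref{E} of the discrepancy into the boundary term of \eqref{rellichBr}. The only genuine input beyond \eqref{D'2} is the vanishing hypothesis \eqref{aa}, together with the bookkeeping that converts the $(N-1)$-dimensional Hausdorff measure $dH_{N-1}$ appearing in \eqref{D'2} into the $H$-perimeter measure $d\sigma_H$ appearing in the statement.

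First I would recall that on $S_r$ one has $\rho \equiv r$, so that the discrepancy identity \eqref{E} reads pointwise on $S_r$
\[
<\nh u,\nh \rho> = \frac{Zu}{r}|\nh \rho|^2 + E_u.
\]
Multiplying by $\frac{Zu}{r}\frac{1}{|\nabla \rho|}$ and integrating over $S_r$ reproduces exactly the splitting recorded in \eqref{D'2}, namely the ``diagonal'' term $2\int_{S_r}\left(\frac{Zu}{r}\right)^2 \frac{|\nh \rho|^2}{|\nabla \rho|}\, dH_{N-1}$ together with the cross term $2\int_{S_r}\left(\frac{Zu}{r}\right) E_u\, \frac{dH_{N-1}}{|\nabla \rho|}$. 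Invoking now the hypothesis $E_u \equiv 0$ from \eqref{aa}, the cross term vanishes identically, leaving
\[
D'(r) = \frac{Q-2}{r}D(r) + 2\int_{S_r}\left(\frac{Zu}{r}\right)^2 \frac{|\nh \rho|^2}{|\nabla \rho|}\, dH_{N-1}.
\]

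Finally I would rewrite the surface measure: by \eqref{sigmaH} and \eqref{Nh} one has $d\sigma_H = |\bN_H|\, dH_{N-1} = \frac{|\nh \rho|}{|\nabla \rho|}\, dH_{N-1}$, hence $\frac{|\nh \rho|^2}{|\nabla \rho|}\, dH_{N-1} = |\nh \rho|\, d\sigma_H$. Substituting this into the previous display yields precisely \eqref{D'gen}. Since every step is an algebraic substitution into an already-established identity, there is no serious obstacle here; the only points requiring care are the substitution $\rho = r$ on $S_r$ (so that $Zu/\rho$ becomes $Zu/r$ in the diagonal term) and the fact that $D'(r)$ is to be understood for a.e. $r\in(0,R)$, exactly as justified via the co-area formula in the proof of Proposition \ref{P:D'}.
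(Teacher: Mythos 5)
Your proposal is correct and follows essentially the same route as the paper: the paper's proof simply observes that \eqref{D'gen} follows immediately from \eqref{D'2} once $E_u \equiv 0$, and your argument spells out exactly this, including the derivation of \eqref{D'2} from Proposition \ref{P:D'} via the discrepancy identity and the measure conversion $\frac{|\nh \rho|^2}{|\nabla \rho|}\, dH_{N-1} = |\nh \rho|\, d\sigma_H$ coming from \eqref{sigmaH} and \eqref{Nh}. All steps check out, including the substitution $\rho = r$ on $S_r$ and the a.e.\ interpretation of $D'(r)$.
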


\begin{proof}
If \eqref{aa} holds, then the desired conclusion follows immediately from \eqref{D'2}. 

\end{proof}

While in Section \ref{S:examples} we will provide large classes of  examples in which  the assumption \eqref{aa} is fulfilled, we state here a simple lemma which shows that the class of functions satisfying \eqref{aa} is not empty.

\begin{prop}\label{P:radzero}
In a Carnot group $\bG$ let $u(g) = f(\rho(g))$ for some $C^1$ function $f:[0,\infty)\to \R$. Then, $u$ has vanishing discrepancy $E_u$.
\end{prop}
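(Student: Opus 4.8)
The plan is to verify directly that a radial function $u(g) = f(\rho(g))$ satisfies $E_u \equiv 0$ by computing the two terms in the definition \eqref{E} of the discrepancy and showing they cancel. The key observation is that for such $u$, both the horizontal gradient $\nh u$ and the quantity $Zu$ can be expressed in closed form via the chain rule, using only the homogeneity properties of $\rho$ already established in the paper.

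\begin{proof}
Since $u(g) = f(\rho(g))$, the chain rule gives $X_i u = f'(\rho)\, X_i \rho$ for each $i = 1,\dots,m$, and hence
\[
\nh u = f'(\rho)\, \nh \rho.
\]
Consequently,
\[
<\nh u, \nh \rho> = f'(\rho)\, |\nh \rho|^2.
\]
On the other hand, because $\rho$ is homogeneous of degree one with respect to the group dilations \eqref{dilG}, the Euler formula \eqref{E0} yields $Z\rho = \rho$. Applying the chain rule once more, we obtain
\[
Zu = f'(\rho)\, Z\rho = f'(\rho)\, \rho,
\]
so that
\[
\frac{Zu}{\rho}\, |\nh \rho|^2 = f'(\rho)\, |\nh \rho|^2.
\]
Substituting these two expressions into the definition \eqref{E} of the discrepancy gives
\[
E_u = <\nh u, \nh \rho> - \frac{Zu}{\rho}\, |\nh \rho|^2 = f'(\rho)\, |\nh \rho|^2 - f'(\rho)\, |\nh \rho|^2 = 0,
\]
which is the desired conclusion.
\end{proof}

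The computation is entirely elementary, so there is no genuine obstacle; the only point requiring care is the justification that $Z\rho = \rho$, which follows from the degree-one homogeneity of $\rho$ recorded just after \eqref{rho} together with the Euler formula \eqref{E0}. Everything else is a direct application of the chain rule to the composite function $f \circ \rho$, and the cancellation of the two terms is immediate once both have been written in the form $f'(\rho)\,|\nh \rho|^2$.
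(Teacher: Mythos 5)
Your proof is correct and follows exactly the paper's own argument: the chain rule gives $\nh u = f'(\rho)\,\nh\rho$ and $Zu = f'(\rho)\,Z\rho = f'(\rho)\,\rho$ (the latter via the degree-one homogeneity of $\rho$ and the Euler formula \eqref{E0}), after which the two terms in \eqref{E} cancel. Your write-up merely makes explicit the justification of $Z\rho = \rho$, which the paper states more tersely.
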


\begin{proof}
By the chain rule we have $\nh u = f'(\rho) \nh \rho$, $Zu = f'(\rho) Z\rho = f'(\rho) \rho$, where we have used the fact that $\rho$ is homogenous of degree one. Thus,
\[
E_u = f'(\rho) |\nh \rho|^2 - f'(\rho) |\nh \rho|^2 = 0.
\]

\end{proof}

\begin{thrm}\label{T:mono}
Let $\bG$ be a Carnot group, and let $u$ be harmonic in $B_R$. Suppose in addition that $u$ has vanishing discrepancy, i.e., $u$ satisfies the differential equation \eqref{aa}
in $B_R$. Then, the frequency of $u$ is non-decreasing on $(0,R)$. In particular, $N(u,\cdot)\in L^\infty(0,R)$. In view of Theorems \ref{T:Nb} and \ref{T:sucp}
we conclude that harmonic functions of vanishing discrepancy have the \emph{sucp} in $\bG$.
\end{thrm}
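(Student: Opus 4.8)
The plan is to show that the logarithmic derivative of the frequency $N(r) = rD(r)/H(r)$ is nonnegative for a.e. $r \in (0,R)$. Since by the standing convention (Remark \ref{R:nonvan}, Lemma \ref{L:nondeg}) the function $u$ does not vanish identically on any $B_r$, we have $H(r) \neq 0$, so $N(r)$ is well defined; and if $D(r) \equiv 0$ then $\nh u \equiv 0$, $u$ is constant, and $N \equiv 0$ is trivially non-decreasing, so we may assume $N(r) > 0$. Writing $\log N(r) = \log r + \log D(r) - \log H(r)$ and differentiating gives
\[
\frac{N'(r)}{N(r)} = \frac{1}{r} + \frac{D'(r)}{D(r)} - \frac{H'(r)}{H(r)}
\]
for a.e. $r$, the only non-smooth ingredient being $D'(r)$, which exists a.e. by the co-area computation underlying Proposition \ref{P:D'}.

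Next I would substitute the two available first-variation identities. Lemma \ref{L:H'} gives $H'(r)/H(r) = (Q-1)/r + 2D(r)/H(r)$, while the vanishing discrepancy hypothesis \eqref{aa} lets me invoke Theorem \ref{T:D'gen}, so that
\[
\frac{D'(r)}{D(r)} = \frac{Q-2}{r} + \frac{2}{D(r)} \int_{S_r} \left(\frac{Zu}{r}\right)^2 \rhh\, d\sigma_H.
\]
Inserting both into the expression for $N'/N$, the purely $1/r$ terms combine as $(1 + (Q-2) - (Q-1))/r = 0$ — this cancellation is precisely where the correct homogeneous dimension $Q$ enters — leaving
\[
\frac{N'(r)}{N(r)} = 2\left[\frac{1}{D(r)} \int_{S_r} \left(\frac{Zu}{r}\right)^2 \rhh\, d\sigma_H - \frac{D(r)}{H(r)}\right].
\]

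The crux is then to show the bracket is nonnegative, i.e. that $D(r)^2 \le H(r) \int_{S_r} (Zu/r)^2 \rhh\, d\sigma_H$. Here I would use Proposition \ref{P:beauty}, which represents the Dirichlet integral as the boundary quantity $D(r) = \int_{S_r} u\, (Zu/r)\, \rhh\, d\sigma_H$, together with the definition $H(r) = \int_{S_r} u^2\, \rhh\, d\sigma_H$. The desired inequality is then exactly the Cauchy--Schwarz inequality on $S_r$ applied to the functions $u$ and $Zu/r$ against the nonnegative measure $\rhh\, d\sigma_H$. This yields $N'(r) \ge 0$ for a.e. $r$, hence $N$ is non-decreasing on $(0,R)$.

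I expect the only genuine subtlety to be bookkeeping rather than substance: justifying the a.e. differentiability used to write $N'/N$, and verifying that Theorem \ref{T:D'gen} is indeed applicable under the standing hypothesis \eqref{aa}. The monotonicity immediately gives the asserted bound $N(u,\cdot) \in L^\infty(0,R)$, since a non-decreasing finite-valued function is bounded on every $(0,R')$ with $R' < R$; feeding this local boundedness into Theorem \ref{T:Nb} produces the doubling inequality, and Theorem \ref{T:sucp} then delivers the strong unique continuation property for harmonic functions of vanishing discrepancy.
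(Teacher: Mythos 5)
Your proposal is correct and follows essentially the same path as the paper's own proof: differentiate $\log N(r)$, substitute Lemma \ref{L:H'} for $H'/H$ and the vanishing-discrepancy first variation \eqref{D'gen} for $D'/D$ so that the $1/r$ terms cancel, rewrite $D(r)$ as the boundary integral $\int_{S_r} u\,\frac{Zu}{r}\,|\nabla_H\rho|\,d\sigma_H$ via Proposition \ref{P:beauty}, and conclude nonnegativity of $(\log N)'$ by the Cauchy--Schwarz inequality on $S_r$ with respect to the measure $|\nabla_H\rho|\,d\sigma_H$. The only differences are minor bookkeeping points (your separate treatment of the degenerate case $D\equiv 0$ and the a.e.\ differentiability remark), which the paper passes over silently.
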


\begin{proof}
From the definition \eqref{N} of $N(u,r) = N(r)$, we have for a.e. $r\in (0,R)$,
\begin{align}\label{N'}
\frac{d}{dr} \log N(r) & = \frac{1}{r} + \frac{D'(r)}{D(r)} - \frac{H'(r)}{H(r)}
\\
& = \frac{1}{r} + \frac{Q-2}{r} + 2 \frac{\int_{S_r} \left(\frac{Z
u}{r}\right)^2 |\nh \rho| d\sigma_H}{D(r)} - \frac{Q-1}{r} - 2 \frac{D(r)}{H(r)}
\notag\\
& = 2\ \frac{\int_{S_r} \left(\frac{Z
u}{r}\right)^2 |\nh \rho| d\sigma_H}{\int_{S_r} u \frac{Zu}{r}
 |\nabla_H\rho| d\sigma_H} - 2\ \frac{\int_{S_r} u \frac{Zu}{r}
 |\nabla_H\rho| d\sigma_H}{\int_{S_r} u^2
 |\nabla_H\rho| d\sigma_H},
\notag
\end{align}
where we have used, in the order, \eqref{D'gen}, Lemma \ref{L:H'} and Proposition \ref{P:beauty}.
From \eqref{N'} we immediately see that $\frac{d}{dr} \log N(r)\ge 0$ follows from Cauchy-Schwarz inequality.

\end{proof}

Theorem \ref{T:mono} immediately implies the following partial converse to Proposition \ref{P:hom}.

\begin{prop}\label{P:monhom}
Let $\Delta_H u = 0$ in $B_R\subset \bG$, and suppose that $u$ has vanishing discrepancy at $e$ in $B_R$, i.e., \eqref{aa} holds. If there exist $\kappa\ge 0$ such that
\[
N(u,\cdot)\equiv \kappa\ \ \ \ \ \ \text{in}\ (0,R).
\]
Then, $u$ is homogeneous of degree $\kappa$ in $B_R$.
\end{prop}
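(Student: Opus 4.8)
The plan is to exploit the explicit computation of $\frac{d}{dr}\log N(r)$ carried out in the proof of Theorem \ref{T:mono}. Since $u$ has vanishing discrepancy, that proof established (see formula \eqref{N'})
\[
\frac{d}{dr}\log N(r) = 2\,\frac{\int_{S_r}\left(\frac{Zu}{r}\right)^2 \rhh\, d\sigma_H}{\int_{S_r} u\,\frac{Zu}{r}\,\rhh\, d\sigma_H} - 2\,\frac{\int_{S_r} u\,\frac{Zu}{r}\,\rhh\, d\sigma_H}{\int_{S_r} u^2\,\rhh\, d\sigma_H},
\]
and the nonnegativity of the right-hand side came precisely from the Cauchy--Schwarz inequality on the sphere $S_r$ equipped with the measure $d\mu_r = \rhh\, d\sigma_H$. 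The hypothesis $N(u,\cdot)\equiv\kappa$ forces $\frac{d}{dr}\log N(r) = 0$ for a.e. $r$, hence equality must hold in Cauchy--Schwarz for a.e. $r\in(0,R)$. I would first dispose of the trivial case $\kappa = 0$: then $N(r)=rD(r)/H(r)\equiv 0$ gives $D(r)\equiv 0$, so $\nh u\equiv 0$ in $B_R$, and since $X_1,\dots,X_m$ generate the Lie algebra, $u$ is constant, i.e. homogeneous of degree $0$.

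Assume now $\kappa>0$, so that $D(r)>0$ and $H(r)>0$ for all $r$. The core step is to translate the equality case of Cauchy--Schwarz into a pointwise relation. With $a = Zu/r$ and $b = u$ viewed as elements of $L^2(S_r, d\mu_r)$, equality $\big(\int ab\,d\mu_r\big)^2 = \int a^2\,d\mu_r \int b^2\,d\mu_r$ together with $b\not\equiv 0$ (because $H(r)>0$) forces $a = \lambda(r)\, b$ for a scalar $\lambda(r)$; that is, $Zu = r\lambda(r)\, u$ holds $d\mu_r$-a.e. on $S_r$. To upgrade this to an everywhere identity I would note that the weight $\rhh$ vanishes only on the characteristic set of $S_r$, which is negligible for $d\sigma_H$, so the relation holds $d\sigma_H$-a.e. on $S_r$, and then everywhere by continuity of $Zu$ and $u$.

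Next I would identify the proportionality constant. Feeding $Zu = r\lambda(r) u$ into Proposition \ref{P:beauty} gives
\[
D(r) = \int_{S_r} u\,\frac{Zu}{r}\,\dsh = \lambda(r)\int_{S_r} u^2\,\dsh = \lambda(r)\,H(r),
\]
whence $N(r) = rD(r)/H(r) = r\lambda(r)$. Since $N(r)\equiv\kappa$, this yields $r\lambda(r) = \kappa$, i.e. $Zu = \kappa\, u$ on every sphere $S_r$, $0<r<R$. As the spheres $\{S_r\}_{0<r<R}$ foliate $B_R\setminus\{e\}$, the Euler-type identity $Zu = \kappa u$ holds throughout $B_R\setminus\{e\}$, and hence on all of $B_R$ by continuity. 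By the equivalence \eqref{E0} this is exactly the statement that $u$ is homogeneous of degree $\kappa$ in $B_R$, completing the argument.

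The main obstacle I anticipate is the rigorous passage from the $L^2$-equality case to the pointwise identity $Zu = r\lambda(r)u$ on each $S_r$: one must argue that equality holds for a.e. $r$ with $\lambda(r)$ measurable, control the (lower-dimensional) characteristic set where $\rhh = 0$, and verify that the fiberwise constants $r\lambda(r)$ patch into the single global constant $\kappa$ needed for \eqref{E0}. Everything else is a direct consequence of the first-variation machinery already assembled in Theorem \ref{T:mono} and Proposition \ref{P:beauty}.
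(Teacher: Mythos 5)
Your proposal is correct and follows essentially the same route as the paper's own proof: both derive equality in the Cauchy--Schwarz inequality from $\frac{d}{dr}\log N(r)\equiv 0$ via formula \eqref{N'}, obtain the proportionality $\frac{Zu}{r}=\lambda(r)u$ on each sphere $S_r$, identify $r\lambda(r)=\kappa$ by feeding this into Proposition \ref{P:beauty}, and conclude with the Euler formula \eqref{E0}. Your additional refinements (the separate treatment of $\kappa=0$, where \eqref{N'} would involve division by $D(r)=0$, and the measure-theoretic upgrade of the a.e.\ proportionality to a pointwise identity) are careful touches that the paper's proof passes over silently, but they do not change the underlying argument.
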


\begin{proof}
We write for simplicity $N(r)$ instead of $N(u,r)$. From \eqref{N'} in Theorem \ref{T:mono} we have 
\begin{equation}\label{cs}
\frac{d}{dr} \log N(r) = 2\ \frac{\int_{S_r} \left(\frac{Z
u}{r}\right)^2 |\nh \rho| d\sigma_H}{\int_{S_r} u \frac{Zu}{r}
 |\nabla_H\rho| d\sigma_H} - 2\ \frac{\int_{S_r} u \frac{Zu}{r}
 |\nabla_H\rho| d\sigma_H}{\int_{S_r} u^2
 |\nabla_H\rho| d\sigma_H} \ge 0
\end{equation}
by Cauchy-Schwarz inequality. But if we assume that $N(r) \equiv \kappa$ in $(0,R)$, we conclude from \eqref{cs} that for $0<r<R$ one has
\[
\frac{\int_{S_r} \left(\frac{Z
u}{r}\right)^2 |\nh \rho| d\sigma_H}{\int_{S_r} u \frac{Zu}{r}
 |\nabla_H\rho| d\sigma_H} -  \frac{\int_{S_r} u \frac{Zu}{r}
 |\nabla_H\rho| d\sigma_H}{\int_{S_r} u^2
 |\nabla_H\rho| d\sigma_H} \equiv 0,
\]
and therefore we must have equality in the Cauchy-Schwarz inequality. This implies the existence of a function $\la(r)$, such that for every $0<r<R$ one has
\[
\frac{Zu}{r} =  \la(r) u\ \ \ \ \ \ \ \text{on}\ S_r.
\]
Using this information in Proposition \ref{P:beauty}, we obtain
\[
D(r)  = \int_{S_r} u \frac{Zu}{r}
 |\nabla_H\rho| d\sigma_H = \la(r) \int_{S_r} u^2
 |\nabla_H\rho| d\sigma_H = \la(r) H(r).
\]
In turn, this gives
\[
\kappa \equiv N(r) \equiv r \la(r),
\]
which forces the conclusion $\la(r) = \frac{\kappa}{r}$. We thus infer that $Zu = \kappa u$, and thus $u$ is homogeneous of degree $\kappa$. 

\end{proof}

\section{Two theorems on the bounded frequency}\label{S:2theo}

We emphasize that we do not know whether the monotonicity of the frequency asserted in Theorem \ref{T:mono} is true without the additional assumption \eqref{aa}. We strongly suspect that, unlike what happens for classical harmonic functions, see \cite{A}, if the discrepancy of $u$ does not vanish identically, the pure monotonicity of the frequency fails. However, as we have shown in Theorem \ref{T:Nb} and Theorem \ref{T:sucp}, its boundedness would suffice to establish the \emph{sucp}. Although it is very tempting to conjecture that, for a given a harmonic function in $B_R\subset \bG$, its frequency is always locally bounded, presently we do not know whether this fundamental property is true.  The next two results provide some interesting progress toward this open question. The former states that in every group of Metivier type the frequency of a harmonic function is in fact locally bounded (for the notion of group of Metivier see Definition \ref{D:Metivier} below). For the Heisenberg group $\Hn$ the proof of this result was suggested to us by Agnid Banerjee. Here, we reproduce with his kind permission a generalization of his idea.

\begin{thrm}\label{T:agnid}
Let $\bG$ be a group of Metivier type, and let $\Delta_H u = 0$ in $B_R\subset \bG$. There exist $C, R_0>0$, depending on $u$ such that for every $0<r<2R_0$ one has
\[
\int_{B_{2r}} u^2 dg \le C \int_{B_{r}} u^2 dg.
\]
As a consequence of this result and of Theorem \ref{T:Nb}, we have $N(u,\cdot)\in L^\infty(0,R_0)$.
\end{thrm}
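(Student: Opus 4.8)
The plan is to establish the doubling inequality \eqref{DC2} directly, and then to invoke the second half of Theorem \ref{T:Nb} to conclude $N(u,\cdot)\in L^\infty(0,R_0)$. The one new ingredient available in a Metivier group, and unavailable in a general Carnot group, is that by the cited results of Metivier every sub-Laplacian $\Delta_H$ is real-analytic hypoelliptic; hence $u$ is real-analytic in a neighborhood of $e$ and, since we may assume $u\not\equiv 0$, it vanishes to finite order there.

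First I would pass to exponential coordinates and expand $u$ in its absolutely convergent Taylor series about $e$. Because $\bG$ has step two, each coordinate monomial is $\delta_\lambda$-homogeneous of integer weight (horizontal variables having weight $1$, vertical ones weight $2$), so I may regroup the Taylor series by homogeneous degree and write, for $g$ in a fixed neighborhood of $e$,
\[
u(g)=\sum_{k\ge 0} P_k(g),
\]
where each $P_k$ is a $\delta_\lambda$-homogeneous polynomial of degree $k$. Since $u\not\equiv 0$, there is a smallest index $\kappa\in\mathbb N$ with $P_\kappa\not\equiv 0$. Evaluating along the dilations and factoring out $r^\kappa$ gives
\[
\frac{u(\delta_r g)}{r^\kappa}=P_\kappa(g)+\sum_{k>\kappa} r^{k-\kappa}P_k(g),
\]
and the absolute convergence of the series on compact subsets of the domain shows that the right-hand side tends to $P_\kappa(g)$ uniformly for $g\in\overline{B_1}$ as $r\to 0^+$, since $\delta_r$ contracts $\overline{B_1}$ into a fixed coordinate neighborhood of $e$.

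Next I would translate this into an asymptotic for the $L^2$ mass. Using the scaling $(d\circ\delta_r)(g)=r^Q\,dg$ from \eqref{cim}, a change of variables yields
\[
\frac{1}{r^{Q+2\kappa}}\int_{B_r} u^2\,dg=\int_{B_1}\left(\frac{u(\delta_r g)}{r^\kappa}\right)^2 dg.
\]
By the uniform convergence above, the right-hand side tends to $\int_{B_1}P_\kappa^2\,dg$ as $r\to 0^+$, and this limit is strictly positive because $P_\kappa$ is a nonzero polynomial, hence continuous and not a.e.\ zero on $B_1$. Consequently $\int_{B_r}u^2\,dg=r^{Q+2\kappa}\bigl(c_0+o(1)\bigr)$ with $c_0=\int_{B_1}P_\kappa^2\,dg>0$, and therefore
\[
\lim_{r\to 0^+}\frac{\int_{B_{2r}}u^2\,dg}{\int_{B_r}u^2\,dg}=2^{Q+2\kappa}.
\]
In particular there exist $R_0>0$ and $C>0$, depending on $u$, such that \eqref{DC2} holds for $0<r<2R_0$; the final assertion then follows at once from the second part of Theorem \ref{T:Nb}.

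The only genuinely delicate points are the justification of the regrouping of the Taylor series by homogeneous degree and the uniform convergence $u(\delta_r\cdot)/r^\kappa\to P_\kappa$ on $\overline{B_1}$; both rest on the real-analyticity furnished by the Metivier hypothesis together with the contraction property of the dilations. Everything else — the change of variables, the positivity of $c_0$, and the passage to a uniform doubling constant on $(0,2R_0)$ — is routine. I note that the same argument applied to $H(r)$ through \eqref{kf} gives $H(r)\sim c_0' r^{Q-1+2\kappa}$, providing an alternate route that produces \eqref{DC2} from the doubling of the height function.
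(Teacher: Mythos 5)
Your proof is correct, but it follows a genuinely different route from the paper's. The paper also starts from the Metivier real-analyticity of $u$, but then proceeds geometrically: it invokes the Euclidean doubling theorem for real-analytic functions of Garofalo and Garrett \cite{GG} (uniform over centers in $B_{R/2}$), and transfers that doubling from Euclidean balls to the anisotropic gauge balls by a Besicovitch covering argument, covering $B^{|\cdot|}_{\beta}$ and $B^{|\cdot|}_{2\alpha}$ by finitely many disjoint subfamilies of Euclidean balls and rescaling the whole cover with the Euclidean dilations $T_r$ (the key inclusion being $B^{|\cdot|}_{\beta r}\subset T_r(B^{|\cdot|}_{\beta})$ for $r\le 1$). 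You instead work directly with the Taylor series at $e$, regroup it into $\delta_\lambda$-homogeneous polynomials $P_k$, and run a blow-up argument giving the precise asymptotics $\int_{B_r}u^2\,dg = r^{Q+2\kappa}\bigl(c_0+o(1)\bigr)$ with $c_0=\int_{B_1}P_\kappa^2\,dg>0$, from which \eqref{DC2} follows with doubling constant tending to exactly $2^{Q+2\kappa}$. What your approach buys: it is self-contained (no covering lemma, no appeal to \cite{GG}), and it is quantitatively sharper, since it identifies the vanishing order $\kappa$ and shows the doubling constant is asymptotically the one predicted by \eqref{DC} in Theorem \ref{T:Nb}; your closing remark about $H(r)\sim c_0'\,r^{Q-1+2\kappa}$ via \eqref{kf} is also a nice bonus (for its positivity one should note that $P_\kappa$ cannot vanish identically on the relatively open set $\{|\nabla_H\rho|>0\}\cap S_1$, by homogeneity and analyticity). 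What the paper's approach buys: it sidesteps the series manipulations entirely — the regrouping and the uniform convergence of $u(\delta_r\cdot)/r^\kappa$ on $\overline{B_1}$, which you correctly flag as the delicate steps and which do require the standard geometric bound on Taylor coefficients — and its covering mechanism yields doubling uniformly at all centers near $e$, not just at the group identity. Both arguments use harmonicity only through real-analytic hypoellipticity, and both inherit the drawback discussed in Remark \ref{R:diss}: the constants $C, R_0$ depend on the full jet of $u$ at $e$ rather than on $L^2$ data alone.
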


\begin{proof}
We begin by observing that, thanks to \eqref{alphabeta} above, if for $g = (z,t)\in \bG$ we indicate with $|g| = (|z|^4 + 16 |t|^2)^{1/4}$ the non-isotropic Koranyi gauge on $\bG$, then there exist universal constants $\alpha, \beta>0$ such that 
\[
B^{|\cdot|}_{\beta r} \subset B_r \subset B^{|\cdot|}_{\alpha r}.
\]
In order to prove the theorem it will thus suffice to establish the following inequality
\begin{equation}\label{ma}
\int_{B^{|\cdot|}_{2 \alpha r}} u^2 dg \le C \int_{B^{|\cdot|}_{\beta r}} u^2 dg,
\end{equation}
for some constant $C = C_u>0$ and for all $0<r<R_0$. Next, we observe that using the exponential mapping we can identify $\bG$ with $(\R^N,\circ)$, where $\circ$ denotes the group law of $\bG$. Having done this identification, in what follows, given a point $g\in \R^N$, we will denote by $B^e(g,r)$ the standard Euclidean ball centered at $g$ with radius $r$, whereas we will continue indicating with $B_r(g)$ the (pseudo-) ball defined in \eqref{balls}, \eqref{br}. As a consequence of the result of Metivier \cite{Me2}, cited at the end of Section \ref{S:fv}, we know that $u$ is real-analytic in $B_R$. We will appeal to the following theorem due to P. Garrett and the first named author, see Theorem $1$ in \cite{GG}:
\emph{There exist $R_0>0$ and $C_{N,u}>0$ such that for every $g\in B_{R/2}$ and every $0<r<R_0$, one has}
\begin{equation}\label{gg}
\int_{B^e(g,2r)} u^2 dg' \le C_{N,u} \int_{B^e(g,r)} u^2 dg'.
\end{equation}
In what follows, if $\kappa>0$ and $B^e$ indicates a Euclidean ball $B^e(g,r)\subset \R^N$, we will denote by $\kappa B^e$ the concentric ball $B^e(g,\kappa r)$. Using the Besicovitch covering lemma in $\R^N$, we can find a constant $K>1$ depending only on $\alpha, \beta$, such that we can cover $B^{|\cdot|}_\beta$ and $B^{|\cdot|}_{2\alpha}$ with countable families of Euclidean balls $\{B^e_{j}\}$ and $\{KB^e_{j}\}$ respectively,  such that the  following property holds:
there exists $\mathcal N_N \in \mathbb N$ such that 
\begin{equation}\label{besi}
B^{|\cdot|}_{\beta} \subset \bigcup_{i=1,...,\mathcal N_N} \bigcup_{B^e_{j} \in \mathcal G_{i}} B^e_{j},\ \ \  \ B^{|\cdot|}_{2\alpha} \subset \bigcup_{i=1,...,\mathcal N_N} \bigcup_{B^e_{j} \in \mathcal G_{i}} KB^e_{j}
\end{equation}
where each $\mathcal G_{i}$ is a countable disjoint collection of balls. Consider now the Euclidean dilations $T_r: R^{N} \to R^{N}$ defined by $T_r(g)= r g = (rz,rt)$. If $B^e_j = B(g_j,r_j)$, then we clearly have  $T_r(B^e_{j})= B^e(T_r(g_j),r r_j)$. We claim that  
\[
B^{|\cdot|}_{\beta r} \subset T_r(B^{|\cdot|}_\beta),\ \ \ \ \ \text{for}\ r\le 1,
\]
or, equivalently, $T_r^{-1} (B^{|\cdot|}_{\beta r}) \subset B^{|\cdot|}_\beta$. To see this, let $g = (z,t) \in B^{|\cdot|}_{\beta r}$, so that
$(|z|^4 + 16 |t|^2)^{1/4} < \beta r$. Then,  consider $T_r^{-1}(g) = (z/r, t/r)$. When $r \leq  1$ we have 
\[
\left|T_r^{-1}(g)\right| = (\frac{|z|^4}{r^4} + 16 \frac{|t|^2}{r^2})^{1/4} = \frac{1}{r} ( |z|^4 + 16 r^2 |t|^2)^{1/4} \leq \frac{1}{r} ( |z|^4 + 16 |t|^2)^{1/4} < \frac{\beta r}{r}  = \beta.
\]
Hence, the above claim follows. From the claim and \eqref{besi} we thus conclude that 
\begin{equation}\label{1r}
B^{|\cdot|}_{\beta r}  \subset \bigcup_{i=1,...,\mathcal N_N} \bigcup_{B^e_{j} \in \mathcal G_{i}} T_r(B^e_{j}),
\end{equation}
and, analogously,
\begin{equation}\label{2r}
B^{|\cdot|}_{2\alpha r} \subset \bigcup_{i=1,...,\mathcal N_N} \bigcup_{B^e_{j} \in \mathcal G_{i}} T_r(KB^e_{j}).
\end{equation}
Since $\mathcal N_{N}$ is finite and fixed, by \eqref{2r} there exists at least one index $i_0\in \{1,...,\mathcal N_N\}$, such that
\begin{equation}\label{oneway}
\int_{\bigcup_{B^e_{j}\in \mathcal G_{i_0}} T_r(KB^e_{j}) } u^2 dg \geq \frac{1}{\mathcal N_N} \int_{B^{|\cdot|}_{2\alpha r}} u^2 dg.
\end{equation}
On the other hand, using the crucial doubling condition \eqref{gg} on Euclidean balls, and the fact that for $i=1,...,\mathcal N_N$ each $\mathcal G_{i}$ is a disjoint collection,  we obtain
\begin{equation}\label{l2}
\int_{ \bigcup_{B^e_{j}\in \mathcal G_{i_0}} T_r(KB^e_{j}) } u^2 dg \leq C_{N,u} \int_{ \bigcup_{B^e_{j}\in \mathcal G_{i_0}} T_r(B^e_{j})} u^2 dg \leq C_{N,u} \int_{B^{|\cdot|}_{\beta r} } u^2 dg,
\end{equation}
where in the last inequality we have used \eqref{1r}.
Then, by \eqref{oneway} and \eqref{l2} we have
\begin{equation}
\int_{B^{|\cdot|}_{ 2 \alpha r} } u^2 dg \leq \mathcal N_{N} \int_{ \bigcup_{B^e_{j}\in \mathcal G_{i_0}} T_r(K B^e_{j}) } u^2 dg \leq  \mathcal N_N C_{N,u} \int_{B^{|\cdot|}_{\beta r} } u^2 dg,
\end{equation}
which gives the desired conclusion \eqref{ma}.

\end{proof}

\begin{rmrk}\label{R:diss}
Theorem \ref{T:agnid} shows that in every group of Metivier type, and thus in particular in every group of Heisenberg type (see Remark \ref{R:metivier} below), the frequency of a harmonic function is locally bounded. If, on the one hand, this result answers affirmatively a question which was left open in \cite{GL} and provides evidence in favor of a more general conjecture in any Carnot group, it does nonetheless contain an unsatisfactory aspect. The doubling constant in \eqref{ma} depends on the doubling constant on Euclidean balls in the basic estimate \eqref{gg} from \cite{GG}.  The latter, in turn, depends on the real-analytic character of $u$, and thus on the value of all derivatives at $u$ at the base point $e\in \bG$. This is in sharp contrast with the doubling constant which one obtains from a local bound on the frequency which solely  depends on the $L^2$ norm of $u$ and of the first horizontal derivatives of $u$ on a fixed ball.  
\end{rmrk}

Before stating the next result, we introduce some quantities which play a role in its statement. In what follows, given $R>0$, we will consider increasing functions $f:(0,R)\to (0,\infty)$ satisfying the Dini integrability condition
\begin{equation}\label{dini}
\int_0^R f(t) \frac{dt}{t} <\infty.
\end{equation}
We note for future use that \eqref{dini} implies that
\begin{equation}\label{dini2}
\underset{t\to 0^+}{\lim}\ f(t) = 0.
\end{equation}

\begin{thrm}\label{T:GLgen}
Let $u$ be a harmonic function in $B_R$ and suppose that there exists a function $f$ satisfying \eqref{dini} such that the discrepancy of $u$ at $e$ satisfy  
\begin{equation}\label{EE}
|E_u| \le \frac{f(\rho)}{\rho}  |\nh \rho|^2 |u|,\ \ \ \ \text{in}\ B_R.
\end{equation}
Then, there exists $0<R_0<R$ such that $N(u,\cdot) \in L^\infty(0,R_0)$. As a consequence, if $\kappa = ||N(u,\cdot)||_{L^\infty(0,R_0)}$, then $u$ satisfies the doubling condition 
\begin{equation}\label{DDC}
\int_{B_{2r}} u^2 dg \le C^\star 2^{Q+2\kappa}  \int_{B_r} u^2 dg, \ \ \ \ \ 0<r<R_0/2,
\end{equation}
and therefore if $u$ vanishes to infinite order at $e$, we must have $u\equiv 0$ in $B_{R_0}$.
\end{thrm}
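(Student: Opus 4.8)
The plan is to control the logarithmic derivative of the frequency and show that it cannot blow up too fast under the Dini hypothesis \eqref{EE}. First I would return to the general first variation formula \eqref{D'2} (which holds without the vanishing discrepancy assumption) and recompute $\frac{d}{dr}\log N(r)$ exactly as in the proof of Theorem \ref{T:mono}, but now retaining the discrepancy term. This yields, for a.e. $r\in(0,R)$, an identity of the form
\begin{equation*}
\frac{d}{dr}\log N(r) = 2\,\frac{\int_{S_r}\left(\frac{Zu}{r}\right)^2 |\nh\rho|\,d\sigma_H}{D(r)} - 2\,\frac{D(r)}{H(r)} + \frac{2}{D(r)}\int_{S_r}\frac{Zu}{r}\,E_u\,\frac{dH_{N-1}}{|\nabla\rho|}.
\end{equation*}
The first two terms are nonnegative by Cauchy--Schwarz exactly as before; the entire difficulty is now isolated in the third ``error'' term coming from the nonvanishing discrepancy.

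Next I would estimate the error term using \eqref{EE}. Bounding $|E_u|\le \frac{f(\rho)}{\rho}|\nh\rho|^2|u|$ and using $\rho=r$ on $S_r$, the error integrand is controlled by $\frac{f(r)}{r}|u|\cdot\frac{|Zu|}{r}|\nh\rho|^2$, so that by Cauchy--Schwarz on $S_r$ the error term is dominated by a constant times $\frac{f(r)}{r}$ multiplied by the geometric mean of $\frac{1}{D(r)}\int_{S_r}(\frac{Zu}{r})^2|\nh\rho|\,d\sigma_H$ and $\frac{H(r)}{D(r)}\cdot\frac{1}{r^2}$. Using the definition $N(r)=rD(r)/H(r)$ to re-express $H(r)/D(r)=r/N(r)$, and absorbing the first positive Cauchy--Schwarz factor, I expect to arrive at a differential inequality of the schematic form
\begin{equation*}
\frac{d}{dr}\log N(r) \ge -\frac{C f(r)}{r}\left(1 + \frac{1}{N(r)}\right)
\end{equation*}
for a universal $C$, valid for a.e.\ $r$ in a possibly shrunk interval $(0,R_0)$. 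The key point is that the error is proportional to $f(r)/r$, which is precisely Dini-integrable by \eqref{dini}.

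From such a differential inequality the boundedness follows by a standard Gronwall-type argument. On the region where $N(r)\ge 1$ the factor $(1+1/N(r))\le 2$, so $\frac{d}{dr}\log N(r)\ge -2C f(r)/r$; integrating and exponentiating, using $\int_0^{R_0} f(t)\,\frac{dt}{t}<\infty$, gives that $\log N(r)$ is bounded below by an integrable-in-$\log$ quantity, and crucially an upper bound: running the same estimate shows $N(r)$ cannot grow beyond $N(R_0)\exp\!\big(2C\int_0^{R_0}f(t)\frac{dt}{t}\big)$ as $r$ increases, while on the region $N(r)<1$ there is nothing to prove. I would make this rigorous by showing that the quantity $N(r)\exp\!\big(2C\int_r^{R_0} f(t)\frac{dt}{t}\big)$ is essentially monotone (up to handling the $1/N$ term, which is only relevant where $N$ is small and hence harmless). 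This establishes $N(u,\cdot)\in L^\infty(0,R_0)$ with $\kappa=\|N(u,\cdot)\|_{L^\infty(0,R_0)}$ finite.

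Once local boundedness of the frequency is in hand, the doubling condition \eqref{DDC} is immediate from Theorem \ref{T:Nb} applied on $(0,R_0)$, and the final unique continuation conclusion follows verbatim from Theorem \ref{T:sucp}: a function with locally bounded frequency that vanishes to infinite order at $e$ must vanish identically on $B_{R_0}$. The main obstacle I anticipate is the error-term estimate: one must carefully pair the $Zu$ factor from the discrepancy with the $Zu$ appearing in the first Cauchy--Schwarz term so that the bad term can be partially absorbed rather than merely bounded, and one must track the dependence on $N(r)$ so that the resulting differential inequality genuinely closes under the mere Dini condition rather than requiring, say, $\int_0^R f(t)\,dt/t^2<\infty$. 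Handling the factor $1/N(r)$ when the frequency is small, and justifying the a.e.\ differentiability and the integration across the set where $N$ crosses $1$, are the technical points requiring the most care.
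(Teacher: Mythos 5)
Your proposal takes essentially the same route as the paper: start from \eqref{NN'} (the analogue of \eqref{N'} retaining the discrepancy term coming from \eqref{D'2}), estimate that term by Cauchy--Schwarz together with \eqref{EE}, derive a Dini-type lower bound for $\frac{d}{dr}\log N$, integrate it only on the region where $N$ is large, and conclude with Theorems \ref{T:Nb} and \ref{T:sucp}. The differential inequality you aim for, and the restriction of the Gronwall argument to the set where $N\ge 1$, are exactly what the paper does (its set $\Lambda_{R_0}$ and the interval decomposition \eqref{lambda}).

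The one point where your sketch would fail if implemented literally is the absorption. With $A(r)=\int_{S_r}\bigl(\frac{Zu}{r}\bigr)^2\rhh\,d\sigma_H$, the error term is bounded via \eqref{EE} by $\frac{2f(r)}{r}\sqrt{A(r)H(r)}/D(r)$, and if you absorb it into the good term $2A(r)/D(r)$ by a single application of Young's inequality, the remainder is $A(r)/D(r)-2D(r)/H(r)$, which Cauchy--Schwarz only bounds below by $-D(r)/H(r)=-N(r)/r$; this term is not Dini-integrable a priori, so the inequality does not close. The paper's fix is a dichotomy: when $\sqrt{A(r)H(r)}\le\sqrt{2}\,D(r)$ (case \eqref{case1}) the error term is already $\le 2\sqrt{2}\,f(r)/r$ and no absorption is needed, the two main terms being nonnegative by Cauchy--Schwarz; when instead \eqref{case2} holds, the remainder $A(r)/D(r)-2D(r)/H(r)$ is strictly positive, so absorption leaves only $-\frac{f(r)^2}{r^2}\frac{H(r)}{D(r)}=-\frac{f(r)^2}{r\,N(r)}$. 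The two cases combined give precisely your schematic bound $\frac{d}{dr}\log N(r)\ge -\frac{Cf(r)}{r}\bigl(1+\frac{1}{N(r)}\bigr)$ (using \eqref{dini2} to shrink to an $R_0$ where $f\le 1$, so that $f^2\le f$), after which your argument coincides with the paper's: on the open set where $N>\max\{1,N(R_0)\}$ one integrates up to the right endpoint of each component, where $N$ is under control. One minor slip: your almost-monotone quantity should be $N(r)\exp\bigl(-2C\int_r^{R_0}f(t)\frac{dt}{t}\bigr)$; with the plus sign in the exponent its logarithmic derivative has the wrong sign.
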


\begin{proof}
If we proceed as for \eqref{N'}, but using \eqref{D'2} instead of \eqref{D'gen}, we obtain
\begin{align}\label{NN'}
\frac{d}{dr} \log N(r) & = 2\ \frac{\int_{S_r} \left(\frac{Z
u}{r}\right)^2 |\nh \rho| d\sigma_H}{\int_{S_r} u \frac{Zu}{r}
 |\nabla_H\rho| d\sigma_H} - 2\ \frac{\int_{S_r} u \frac{Zu}{r}
 |\nabla_H\rho| d\sigma_H}{\int_{S_r} u^2
 |\nabla_H\rho| d\sigma_H} + 2\ \frac{\int_{S_r} \left(\frac{Z
u}{r}\right) E_u \frac{dH_{N-1}}{|\nabla \rho|}}{\int_{S_r} u \frac{Zu}{r}
 |\nabla_H\rho| d\sigma_H}.
\end{align}
We now distinguish two cases:
\begin{equation}\label{case1}
\left(\int_{S_r} \left(\frac{Z
u}{r}\right)^2 |\nh \rho| d\sigma_H\right)^{1/2} \left(\int_{S_r} u^2 |\nh \rho| d\sigma_H\right)^{1/2} \le \sqrt 2\ D(r),
\end{equation}
\begin{equation}\label{case2}
\left(\int_{S_r} \left(\frac{Z
u}{r}\right)^2 |\nh \rho| d\sigma_H\right)^{1/2} \left(\int_{S_r} u^2 |\nh \rho| d\sigma_H\right)^{1/2} > \sqrt 2\ D(r).
\end{equation}
If \eqref{case1} occurs we use Cauchy-Schwarz inequality to find
\begin{align*}
& \left|\int_{S_r} \left(\frac{Z
u}{r}\right) E_u \frac{dH_{N-1}}{|\nabla \rho|}\right| \le \left(\int_{S_r} \left(\frac{Z
u}{r}\right)^2 |\nh \rho| d\sigma_H\right)^{1/2} \left(\int_{S_r} \frac{|E_u|^2}{|\nh \rho|^2}  \frac{dH_{N-1}}{|\nabla \rho|}\right)^{1/2}
\\
& \le \frac{f(r)}{r}\ \left(\int_{S_r} \left(\frac{Z
u}{r}\right)^2 |\nh \rho| d\sigma_H\right)^{1/2} \left(\int_{S_r} u^2 |\nh \rho| d\sigma_H\right)^{1/2},
\end{align*}
where in the last inequality we have used the assumption \eqref{EE}. By \eqref{case1} we then find
\begin{align*}
& \left|\int_{S_r} \left(\frac{Z
u}{r}\right) E_u \frac{dH_{N-1}}{|\nabla \rho|}\right| \le 2 \sqrt 2 \frac{f(r)}{r} D(r),
\end{align*} 
and we thus have from \eqref{NN'}
\begin{align}\label{case11}
\frac{d}{dr} \log N(r) & \ge 2\ \frac{\int_{S_r} \left(\frac{Z
u}{r}\right)^2 |\nh \rho| d\sigma_H}{\int_{S_r} u \frac{Zu}{r}
 |\nabla_H\rho| d\sigma_H} - 2\ \frac{\int_{S_r} u \frac{Zu}{r}
 |\nabla_H\rho| d\sigma_H}{\int_{S_r} u^2
 |\nabla_H\rho| d\sigma_H} - 2 \sqrt 2 \frac{f(r)}{r}
 \\
 & \ge - 2 \sqrt 2 \frac{f(r)}{r},
 \notag
\end{align}
by Cauchy-Schwarz inequality. 

If case \eqref{case2} occurs we use the inequality $|ab|\le \frac{a^2+b^2}{2}$ to obtain
\begin{align*}
& 2\ \left|\int_{S_r} \left(\frac{Z
u}{r}\right) E_u \frac{dH_{N-1}}{|\nabla \rho|}\right| \le  \int_{S_r} \left(\frac{Z
u}{r}\right)^2 |\nh \rho| d\sigma_H +  \int_{S_r} \frac{|E_u|^2}{|\nh \rho|^2}  \frac{dH_{N-1}}{|\nabla \rho|}
\\
& \le \int_{S_r} \left(\frac{Z
u}{r}\right)^2 |\nh \rho| d\sigma_H  + \frac{f(r)^2}{r^2}\ \int_{S_r} u^2 |\nh \rho| d\sigma_H,
\end{align*}
where again we have used the assumption \eqref{EE}.
Substituting in \eqref{NN'} we find
\begin{align}\label{case11}
\frac{d}{dr} \log N(r) & \ge \frac{\int_{S_r} \left(\frac{Z
u}{r}\right)^2 |\nh \rho| d\sigma_H}{\int_{S_r} u \frac{Zu}{r}
 |\nabla_H\rho| d\sigma_H} - 2\ \frac{\int_{S_r} u \frac{Zu}{r}
 |\nabla_H\rho| d\sigma_H}{\int_{S_r} u^2
 |\nabla_H\rho| d\sigma_H} - \frac{f(r)^2}{r^2} \frac{H(r)}{D(r)}
 \\
 & \ge  - \frac{f(r)^2}{r^2} \frac{H(r)}{D(r)},
\notag
\end{align}
where in the last inequality we have used \eqref{case2}. 
By \eqref{dini2} we can now choose $R_0\in (0,R)$ such that
\[
f(r) \le 1,\ \ \ \ 0<r<R_0.
\]
From \eqref{case11} we thus obtain 
\begin{equation}\label{NNN}
\frac{d}{dr} \log N(r) \ge - \frac{f(r)}{r^2} \frac{H(r)}{D(r)},\ \ \ \ 0<r<R_0.
\end{equation}

To obtain a favorable estimate from \eqref{NNN}, at this point we consider the set
\[
\Lambda_{R_0} = \{0<r<R_0\mid N(r) > \max\{1,N(R_0)\}.
\] 
Since the function $r\to N(r)$ is absolutely continuous, the set $\Lambda_{R_0}$ is an open set, and therefore  we can write it
\begin{equation}\label{lambda}
\Lambda_{R_0} = \bigcup_{j=1}^\infty (a_j,b_j),\ \ \ \ \ a_j, b_j \not\in \Lambda_{R_0}.
\end{equation}
We stress that if $r\in \Lambda_{R_0}$, we have $N(r) > 1$, and therefore
\[
\frac{H(r)}{D(r)} < r.
\]
We thus conclude from \eqref{NNN} that
\begin{equation}\label{NNN'}
\frac{d}{dr} \log N(r) \ge - \frac{f(r)}{r},\ \ \ \ \ \ r\in \Lambda_{R_0}.
\end{equation}
Combining \eqref{case11} and \eqref{NNN'} we obtain the following crucial information
 \begin{equation}\label{Nfinal}
\frac{d}{dr} \log N(r) \ge - 2 \sqrt 2 \frac{f(r)}{r},\ \ \ \ \ \ r\in \Lambda_{R_0}.
\end{equation}
With the estimate \eqref{Nfinal} in hands we can now prove that, in fact, $N\in L^\infty(0,R_0)$. 
We proceed as follows. We define 
\[
J_r=\{ t\in (r,2r)\mid t\not\in \Lambda_{R_0}\}.
\]
On $J_r$ we have trivially
\[
0 \le N(t) \le \max\{N(R_0),1\}.
\]
On the other hand, integrating the inequality in \eqref{Nfinal} on $(r,b_j)$, where $r\in (a_j,b_j)$ and $(a_j,b_j)$ is one of the intervals in the decomposition \eqref{lambda}, we obtain
\[
\log \frac{N(b_j)}{N(r)} = \int_r^{b_j} \frac{d}{dt} \log N(t) dt \ge - 2\sqrt 2 \int_0^{R_0} f(t) \frac{dt}{t}  =  -  F_0,
\]
where we have let
\[
F_0 = 2\sqrt 2 \int_0^{R_0} f(t) \frac{dt}{t}  <\infty.
\]
Recalling that $b_j\not\in \Lambda_{R_0}$, from this inequality we find for every $r\in \Lambda_{R_0}$
\[
N(r) \le \exp(F_0) \max\{N(R_0),1\}.
\]
We have thus shown that $N\in L^\infty(0,R_0)$, with 
\begin{equation}\label{kappaN}
||N||_{L^\infty(0,R_0)} \le \kappa = \max\{1,\exp(F_0)\}\ \max\{N(R_0),1\}.
\end{equation}
By Theorem \ref{T:Nb} we obtain \eqref{DDC} with $\kappa$ given by \eqref{kappaN}.

\end{proof}

%>>>>>>>>>>>>>>>>>>>>>>>

\section{One-parameter Weiss type monotonicity formulas on Carnot groups}\label{S:WM}

In this section we establish a new monotonicity formula for the subclass of harmonic functions which have vanishing discrepancy at $e$. Our main result is inspired to a monotonicity formula originally proved by G. Weiss \cite{W} for studying the classical obstacle problem. In their recent work \cite{GP},  Petrosyan and the first named author discovered a one-parameter family of Weiss type monotonicity formulas, satisfied by the solutions of the lower-dimensional obstacle problem, and which play a key role in the analysis of the so-called blow-ups at singular points. These authors proved that such family of monotonicity formulas are deeply connected to Almgren's monotonicity of the frequency. The following theorem is in a similar spirit and we hope that it will prove useful in the analysis of variational inequalities of obstacle type.

\begin{thrm}\label{T:weissG}
Let $\bG$ be a Carnot group, and let $u$ be harmonic in $B_R$. For every $\kappa \ge 0$ define for $0<r<R$
\begin{equation}\label{W}
\mathcal W_\kappa(u,r) \overset{def}{=} \frac{1}{r^{Q-2+2\kappa}} \int_{B_r} |\nabla_H u|^2 dg - \frac{\kappa}{r^{Q-1+2\kappa}} \int_{S_r} u^2 |\nabla_H\rho| d\sigma_H.
\end{equation}
If $u$ has vanishing discrepancy at $e$ in $B_R$, i.e., $u$ satisfies \eqref{aa} above, then one has
\begin{equation}\label{W'}
\frac{d}{dr} \mathcal W_\kappa(u,r) = \frac{2}{r^{Q+2\kappa}} \int_{S_r} \left(Zu - \kappa u\right)^2 |\nabla_H\rho| d\sigma_H.
\end{equation}
As a consequence, $r\to \mathcal W_\kappa(u,r)$ is non-decreasing. Furthermore, $\mathcal W_\kappa(u,\cdot)$ is constant if and only if $u$ is homogeneous of degree $\kappa$.
\end{thrm}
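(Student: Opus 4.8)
The plan is to differentiate \eqref{W} directly, treating it as a combination of the two functionals already analyzed. Writing $a = Q-2+2\kappa$, I would first record
\[
\mathcal W_\kappa(u,r) = r^{-a} D(r) - \kappa\, r^{-(a+1)} H(r),
\]
and then assemble the three ingredients that I will plug in: Proposition \ref{P:beauty}, which gives $D(r) = \frac1r \int_{S_r} u\, Zu\, \dsh$; Lemma \ref{L:H'}, which gives $H'(r) = \frac{Q-1}{r} H(r) + 2D(r)$; and Theorem \ref{T:D'gen}, which, \emph{using the vanishing discrepancy hypothesis} \eqref{aa}, gives $D'(r) = \frac{Q-2}{r} D(r) + 2\int_{S_r} (Zu/r)^2 \dsh$. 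To keep the bookkeeping transparent I would abbreviate the three boundary integrals as $H(r)=\int_{S_r} u^2\,\dsh$, $P(r)=\int_{S_r} u\,Zu\,\dsh$, and $G(r)=\int_{S_r}(Zu)^2\,\dsh$, so that $D(r)=P(r)/r$ and the last term of $D'(r)$ is $2G(r)/r^2$.

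Next I would carry out the differentiation. Differentiating the first summand produces $(-a+Q-2)\,r^{-a-1}D(r)+2r^{-a-2}G(r)$, and the key cancellation is $-a+Q-2=-2\kappa$; differentiating the second summand produces $\kappa\,(a+1-(Q-1))\,r^{-a-2}H(r)-2\kappa\,r^{-a-2}P(r)$, where again $a+1-(Q-1)=2\kappa$. Rewriting $r^{-a-1}D(r)=r^{-a-2}P(r)$ and noting $a+2=Q+2\kappa$, everything collapses onto a single factor $2r^{-(Q+2\kappa)}$ times $G(r)-2\kappa P(r)+\kappa^2 H(r)$. The integrand of this last combination is precisely the expansion of the perfect square $(Zu-\kappa u)^2$, which yields \eqref{W'}:
\[
\frac{d}{dr}\mathcal W_\kappa(u,r) = \frac{2}{r^{Q+2\kappa}} \int_{S_r}\left(Zu - \kappa u\right)^2 \dsh.
\]

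For the two consequences I would argue as follows. Since the integrand $(Zu-\kappa u)^2 \rhh \ge 0$, formula \eqref{W'} shows $\frac{d}{dr}\mathcal W_\kappa(u,r)\ge 0$, so $r\to\mathcal W_\kappa(u,r)$ is non-decreasing. For the rigidity statement, $\mathcal W_\kappa(u,\cdot)$ is constant iff its derivative vanishes for a.e.\ $r$, iff $\int_{S_r}(Zu-\kappa u)^2\,\dsh=0$ for a.e.\ $r$; because $Zu-\kappa u$ is continuous and $\rhh>0$ away from $e$, this forces $Zu=\kappa u$ throughout $B_R\setminus\{e\}$, which by the Euler formula \eqref{E0} is exactly homogeneity of degree $\kappa$ (and conversely, homogeneity makes the integrand vanish identically). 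The computation is essentially mechanical, so the only real care is in the bookkeeping of the exponents and the two cancellations $-a+Q-2=-2\kappa$ and $a+1-(Q-1)=2\kappa$; the one genuinely nontrivial point is that the homogeneity conclusion rests on the positivity of $\rhh$ and on translating the pointwise Euler identity back into \eqref{homG} via \eqref{E0}.
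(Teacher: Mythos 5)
Your derivation of \eqref{W'} is correct and is essentially the paper's own proof: the same decomposition $\mathcal W_\kappa(u,r)=r^{-(Q-2+2\kappa)}D(r)-\kappa\, r^{-(Q-1+2\kappa)}H(r)$, differentiated using Lemma \ref{L:H'}, the first variation of the energy, and Proposition \ref{P:beauty} to close the perfect square. The only cosmetic difference is that the paper carries the general identity \eqref{D'2} through the computation and discards the $E_u$ term at the end, whereas you invoke Theorem \ref{T:D'gen} at the outset; the exponent bookkeeping and the resulting formula agree with the paper, and the monotonicity consequence is likewise fine.

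There is, however, a genuine flaw in your justification of the rigidity statement. You argue that vanishing of $\int_{S_r}(Zu-\kappa u)^2\,\rhh\, d\sigma_H$ for a.e.\ $r$ forces $Zu=\kappa u$ on $B_R\setminus\{e\}$ ``because $\rhh>0$ away from $e$.'' That claim is false in every non-abelian Carnot group: $\nh \rho$ vanishes at the characteristic points of the spheres $S_r$. For instance, in a group of Heisenberg type, \eqref{gradgaugeH} gives $\rh=|z|^2/\rho^2$, which vanishes on the entire center $\{z=0\}$, not just at $e$. So the vanishing of the surface integrals only yields $Zu=\kappa u$ on the set $\{\nh\rho\neq 0\}$, and an extra step is needed. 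The conclusion is still correct: the set $A=\{g\in B_R\setminus\{e\}\mid Zu(g)\neq \kappa u(g)\}$ is open by continuity of $Zu-\kappa u$; if $A$ were non-empty, then $\nh\rho$ would vanish identically on an open set, whence (arguing as in Lemma \ref{L:nondeg}, since $X_1,\dots,X_m$ together with their commutators span the Lie algebra) $\rho$ would be constant on that open set, contradicting the Euler identity $Z\rho=\rho>0$ there. Hence $A=\emptyset$, so $Zu=\kappa u$ throughout $B_R$, and \eqref{E0} gives homogeneity of degree $\kappa$, as you intended. Note that the paper's own proof stops once \eqref{W'} is established and leaves the rigidity implicit, so this repair is needed in any complete write-up; but as stated, your justification rests on a false positivity claim and should be corrected.
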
  

\begin{proof}
From \eqref{W} it is clear that we can write
\[
\mathcal W_\kappa(u,r) \overset{def}{=} \frac{1}{r^{Q-2+2\kappa}} D(r) - \frac{\kappa}{r^{Q-1+2\kappa}} H(r).
\]
Applying Lemma \ref{L:H'} and the identity \eqref{D'2}, we thus obtain 
\begin{align*}
\frac{d}{dr} \mathcal W_\kappa(u,r) & =  \frac{1}{r^{Q-2+2\kappa}} D'(r) - \frac{Q - 2 + 2\kappa}{r} \frac{D(r)}{r^{Q-2+2\kappa}} + \frac{\kappa(Q-1+2\kappa)}{r} \frac{H(r)}{r^{Q-1+2\kappa}} - \frac{\kappa}{r^{Q-1+2\kappa}} H'(r)
\\
& = \frac{1}{r^{Q-2+2\kappa}}\left\{D'(r) - \frac{Q - 2 + 2\kappa}{r} D(r)- \frac{\kappa}{r} H'(r) + \frac{\kappa(Q-1+2\kappa)}{r^2} H(r)\right\}
\\
& = \frac{1}{r^{Q-2+2\kappa}}\bigg\{ \frac{Q-2}{r} D(r) + 2 \int_{S_r} \left(\frac{Z u}{r}\right)^2 |\nh \rho|
d\sigma_H  - \frac{Q - 2 + 2\kappa}{r} D(r)
\\
& - \frac{\kappa}{r}\left(\frac{Q-1}{r} H(r) + 2
D(r)\right) + \frac{\kappa(Q-1+2\kappa)}{r^2} H(r) + 2  \int_{S_r} \left(\frac{Z
u}{r}\right) E_u \frac{dH_{N-1}}{|\nabla \rho|}\bigg\}
\\
& = \frac{2}{r^{Q-2+2\kappa}} \left\{\int_{S_r} \left(\frac{Z u}{r}\right)^2 |\nh \rho|
d\sigma_H  - \frac{2\kappa}{r} \int_{S_r} u \frac{Zu}{r}
 |\nabla_H\rho| d\sigma_H
  + \frac{\kappa^2}{r^2} \int_{S_r} u^2 |\nabla_H\rho| d\sigma_H \right\}
\\
& = \frac{2}{r^{Q+2\kappa}} \int_{S_r} \left(Zu - \kappa u\right)^2   |\nabla_H\rho| d\sigma_H  +  \frac{2}{r^{Q-2+2\kappa}} \int_{S_r} \left(\frac{Z
u}{r}\right) E_u \frac{dH_{N-1}}{|\nabla \rho|}.
\end{align*}
It is now clear that if $u$ satisfies \eqref{aa}, then the identity \eqref{W'} holds.

\end{proof}

Before proving the next result we recall the Cauchy-Schauder estimates established in \cite{CDG}.
Although such estimates are valid for general H\"ormander type operators, we state them in the special setting of Carnot groups.

\begin{lemma}\label{L:Harmonic}
Let $\bG$ be a Carnot group, and suppose that
$\Delta_H u = 0$ in $B_{4r}(g)$. For any $s\in \mathbb
N$
\[
|X_{j_1}X_{j_2}...X_{j_s}u(g)| \leq \frac{C}{r^s}
\underset{\overline{B}_r(g)}{\max}\ |u|,
\]
for some universal constant $C=C(\bG,s)>0$. In the above estimate, for every
$i = 1,...,s,$ the index $j_i$ runs in the set $\{1,...,m\}$ .
\end{lemma}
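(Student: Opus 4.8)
The plan is to read this as a specialization to Carnot groups of the interior subelliptic Schauder estimates of Capogna--Danielli--Garofalo \cite{CDG}, and the one genuinely structural step I would carry out myself is the reduction to unit scale at the group identity. First I would use left-invariance of the fields $X_i$: replacing $u$ by $u\circ L_g$ converts the estimate with center $g$ into the estimate with center $e$, since $X_{j_1}\cdots X_{j_s}(u\circ L_g)(e)=(X_{j_1}\cdots X_{j_s}u)(g)$ and $L_g(\overline B_r)=\overline B_r(g)$, so that $\max_{\overline B_r(g)}|u|=\max_{\overline B_1}|u\circ L_g|$ after the scaling below. Next I would remove the radius. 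Setting $v=u\circ\delta_r$ and using $X_i(u\circ\delta_r)=r\,(X_i u)\circ\delta_r$ for the first-layer fields, one gets $\deh v=r^2(\deh u)\circ\delta_r$, so $v$ is harmonic exactly when $u$ is; moreover $\delta_{1/r}(B_{4r})=B_4$ because $\rho$ is homogeneous of degree one. Since $\delta_r(e)=e$ and $\delta_r(\overline B_1)=\overline B_r$, the identities $X_{j_1}\cdots X_{j_s}v(e)=r^s\,(X_{j_1}\cdots X_{j_s}u)(e)$ and $\max_{\overline B_1}|v|=\max_{\overline B_r}|u|$ show that the whole statement follows from the single unit-scale inequality
\[
|X_{j_1}\cdots X_{j_s}w(e)|\le C\,\max_{\overline B_1}|w|,\qquad \deh w=0\ \text{in}\ B_4 .
\]

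For this unit-scale bound I would invoke \cite{CDG}. Its engine is the Folland--Stein fundamental-solution calculus for $\deh$: the horizontal derivatives $X_{j_1}\cdots X_{j_s}\tilde\Gamma$ are homogeneous of degree $2-Q-s$ with respect to \eqref{dilG}, and one represents $X_{j_1}\cdots X_{j_s}w(e)$ as a sum of integrals of $w$ against such kernels. The harmonicity of $w$ on the larger ball $B_4$ is what makes the representation valid and keeps the cutoff remainders under control, while the precise homogeneity of the kernels is exactly what regenerates the sharp power $r^{-s}$ once the scaling is undone. The generous factor $4$ in the hypothesis supplies the corridor of nested balls between radius $1$ and radius $4$ on which these cutoffs are deployed.

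The main obstacle, and the reason this is not the one-line computation it would be in $\Rn$, is that horizontal derivatives of $\deh$-harmonic functions are \emph{not} harmonic: one has $\deh(X_j w)-X_j(\deh w)=\sum_i[X_i^2,X_j]w$, which does not vanish because the commutators $[X_i,X_j]$ land in the higher layers $V_2,\dots,V_r$. Consequently one cannot apply the mean value representation \eqref{smvf} to $X_{j_1}\cdots X_{j_s}w$ and iterate as in the classical case, and the full subelliptic apparatus of \cite{CDG} is forced upon us. A second delicate point I would flag is that the control is by the maximum over the \emph{small} ball $\overline B_r$, whereas harmonicity is assumed on the larger $B_{4r}$: this reflects that the estimate is genuinely local at the center (the value $X^s w(e)$ sees only the low-order part of $w$ near $e$) and therefore cannot be extracted from a crude $L^2$ average over the larger ball, on which high-degree oscillatory harmonics would blow up. Securing this localization is the technical heart of the argument, and it is precisely what \cite{CDG} provides; granting it, the scaling reduction above closes the proof.
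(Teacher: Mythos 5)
Your proposal is essentially the paper's approach: the paper gives no proof of this lemma at all, but simply recalls it as the interior Cauchy--Schauder estimate established in \cite{CDG} for general H\"ormander-type operators, which is exactly where you place the analytic core. Your preliminary reduction to unit scale at the identity via left-translation and the dilations $\delta_r$ is correct (and a nice use of the Carnot structure), though strictly unnecessary, since the estimate cited from \cite{CDG} is already stated in scaled form and holds for general H\"ormander vector fields, where no dilation structure is available.
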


\begin{dfn}\label{D:sh} Given a Carnot group $\bG$ with a sub-Laplacian $\Delta_H$, and a nonnegative integer $\kappa$, we indicate with $\mathfrak P_\kappa(\bG)$ the (finite-dimensional) space of all \emph{stratified solid harmonics} of degree $\kappa$. I.e., $\mathfrak P_\kappa(\bG)$ is the space of all stratified polynomials $P_\kappa$ on $\bG$ which are $\delta_\lambda$-homogeneous of degree $\kappa$ (recall \eqref{dilG}), and such that $\Delta_H P_\kappa = 0$. 
\end{dfn}

We have the following global (partial) converse to Proposition \ref{P:hom}, see also Proposition \ref{P:monhom} above.

\begin{thrm}\label{T:N=k}
Let $u$ be a harmonic function in $\bG$ satisfying the additional hypothesis \eqref{aa}, and assume that for no $R>0$ we have $u\equiv 0$ in $B_R$. If for some number $\kappa \ge 0$ we have $N(u,r) \equiv \kappa$, then $\kappa$ must be a nonnegative integer and $u = P_\kappa$, with $P_\kappa \in \mathfrak P_\kappa(\bG)$ (see Definition \ref{D:sh}).
\end{thrm}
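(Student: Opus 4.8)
The plan is to combine two results already at our disposal: Proposition~\ref{P:monhom}, which will give that $u$ is $\delta_\lambda$-homogeneous of degree $\kappa$, and Theorem~\ref{T:bf}, which will give that $u$ is a stratified harmonic polynomial. Once both facts are secured, the conclusion — that $\kappa$ is a nonnegative integer and $u\in\mathfrak P_\kappa(\bG)$ — will follow from a short algebraic comparison of homogeneities.

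First I would apply Proposition~\ref{P:monhom} on each ball $B_R$: since $u$ is harmonic with vanishing discrepancy \eqref{aa} and $N(u,\cdot)\equiv\kappa$, that proposition yields $Zu=\kappa u$, i.e. $u$ is homogeneous of degree $\kappa$ on all of $\bG$. Next, because $N(u,r)\equiv\kappa$ trivially satisfies $N(u,r)\le\kappa$ for every $r>0$, the hypotheses of Theorem~\ref{T:bf} are met (the non-vanishing assumption imposed in Theorem~\ref{T:N=k} is exactly the one required there). Hence $u$ is a stratified harmonic polynomial of degree $\ell\le[\kappa]$.

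It then remains to reconcile the two pieces of information. Writing $u$ in exponential coordinates and decomposing it into its $\delta_\lambda$-homogeneous components $u=\sum_{d\ge 0}P_d$, each $P_d$ being $\delta_\lambda$-homogeneous of \emph{integer} degree $d$, the relation $u(\delta_\lambda g)=\lambda^\kappa u(g)$ forces $\lambda^d=\lambda^\kappa$ for every $d$ with $P_d\not\equiv 0$ and every $\lambda>0$. Since $u\not\equiv 0$, at least one such $d$ exists, whence $\kappa=d\in\mathbb N\cup\{0\}$ and $u=P_\kappa$. As $u$ is harmonic and $\delta_\lambda$-homogeneous of degree $\kappa$, this is precisely the assertion $u\in\mathfrak P_\kappa(\bG)$, and in particular $\ell=\kappa$.

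The step I expect to carry the real weight is the polynomiality of $u$, which above I have simply imported from Theorem~\ref{T:bf}. Should one prefer a self-contained argument, the natural route is: use the homogeneity to get the growth bound $|u(g)|\le C\rho(g)^\kappa$; apply the Cauchy--Schauder estimates of Lemma~\ref{L:Harmonic} on balls $B_r(g_0)$ and let $r\to\infty$ to conclude that every horizontal derivative $X_{j_1}\cdots X_{j_s}u$ of order $s>\kappa$ vanishes identically; and finally deduce that a smooth function annihilated by all horizontal derivatives of some fixed order is a stratified polynomial. This last implication is the genuinely delicate point in the non-abelian setting: it rests on the bracket-generating property of $\{X_1,\dots,X_m\}$ (so that $\nh v\equiv 0$ forces $v$ constant, which gives the base case of an induction on the order) together with an integration/compatibility argument across the commutator structure, and it is exactly the structural input that Theorem~\ref{T:bf} packages through the asymptotic Liouville theorem of \cite{BLU}.
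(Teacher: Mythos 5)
Your proposal is correct, but it reaches the conclusion by a different assembly of the paper's results than the paper's own proof. You obtain the homogeneity $Zu=\kappa u$ from Proposition \ref{P:monhom} (the equality case of Cauchy--Schwarz in the frequency derivative), whereas the paper derives it from the Weiss-type functional: since $H(r)\neq 0$, one writes $\mathcal W_\kappa(u,r)=\frac{H(r)}{r^{Q-1+2\kappa}}\left(N(r)-\kappa\right)$, so $N\equiv\kappa$ forces $\mathcal W_\kappa\equiv 0$, and then Theorem \ref{T:weissG} (which is where the vanishing-discrepancy hypothesis \eqref{aa} enters) gives $\int_{S_r}\left(Zu-\kappa u\right)^2|\nabla_H\rho|\,d\sigma_H=0$ for every $r$, hence $Zu=\kappa u$ by the coarea formula; the paper's own Remark \ref{R:hom} concedes that your route via Proposition \ref{P:monhom} is an equally legitimate way to this step. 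For polynomiality, you import Theorem \ref{T:bf} (whose proof runs through doubling estimates, the mean-value machinery of Lemma \ref{L:trick}, and the asymptotic Liouville theorem of \cite{BLU}), and then pin down $\kappa$ as an integer and $u=P_\kappa$ by decomposing the stratified polynomial into its integer-degree $\delta_\lambda$-homogeneous components and matching powers of $\lambda$ -- a clean and valid argument, since $u\not\equiv 0$. The paper instead proceeds directly: homogeneity yields the growth bound $|u(g)|\le M\left(1+\rho(g)\right)^\kappa$, the Cauchy--Schauder estimates of Lemma \ref{L:Harmonic} then annihilate all horizontal derivatives of order $s>\kappa$ upon letting $r\to\infty$, and the stratified Taylor formula of \cite{FS} gives at once that $\kappa$ is an integer and $u\in\mathfrak P_\kappa(\bG)$ -- which is exactly the ``self-contained'' alternative you sketch in your closing paragraph. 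What your route buys is brevity and reuse of already-proved global machinery; what the paper's route buys is independence from Theorem \ref{T:bf} and its Liouville-theorem input, replacing it with the more elementary growth-plus-Schauder argument, at the modest cost of the final homogeneity-versus-degree comparison being folded into the Taylor expansion rather than made explicit as in your decomposition step.
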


\begin{proof}

From the hypothesis that for every $r>0$ one has $u\not\equiv 0$ in $B_r$ and Lemma \ref{L:nondeg} we now that $H(r) \not= 0$ for every $r>0$. We can thus divide by $H(r)$ in \eqref{W}, obtaining
\begin{equation}\label{Wk}
\mathcal W_\kappa(u,r) = \frac{H(r)}{r^{Q-1+2\kappa}} \big(N(r) - \kappa\big).
\end{equation}
By the hypothesis that $N(r) \equiv \kappa$ we conclude that $\mathcal W_\kappa \equiv 0$.
But then, Theorem \ref{T:weissG} gives 
\[
\int_{S_r} \left(Zu - \kappa u\right)^2 |\nabla_H\rho| d\sigma_H = 0,\ \ \ \  \ \ 0<r<\infty.
\]
By the co-area formula we conclude
\[
\int_{\bG} \left(Zu - \kappa u\right)^2 |\nabla_H\rho|^2 dg = 0.
\]
In particular, we must have 
\begin{equation}\label{Zuu}
Zu = \kappa u,\ \ \ \ \ \text{in}\ \bG.
\end{equation}
This implies for any $g\in \bG$ such that $\rho(g)\ge 1$
\[
u(g) = \rho(g)^\kappa u(\delta_{\rho(g)^{-1}}g),
\]
and therefore
\[
|u(g)| \le \left(\underset{\p B_1}{\max}\ |u|\right)  \rho(g)^\kappa.
\]
This estimate gives for some constant $M = M(u)>0$,
\begin{equation}\label{growthkappa}
|u(g)| \le M (1+  \rho(g))^\kappa,\ \ \ \ g\in \bG.
\end{equation}
Using \eqref{growthkappa}, we now invoke the Cauchy-Schauder estimates in Lemma \ref{L:Harmonic}, obtaining for every $s>\kappa$ and for every $g\in \bG$
\[
|X_{j_1}X_{j_2}...X_{j_s}u(g)|\ \leq\ \frac{C}{r^{s-\kappa}}.
\]
Letting $r\to \infty$ we conclude that all derivatives of $u$ of degree $s>\kappa$ must vanish in $\bG$. By the stratified Taylor's formula in \cite{FS}, we conclude that $\kappa$ is an integer and $u$ is a stratified harmonic polynomial of order $\kappa$, i.e., an element of the space $\mathfrak P_\kappa(\bG)$.

\end{proof}

\begin{rmrk}\label{R:hom}
We emphasize that \eqref{Zuu} above, i.e., the homogeneity of $u$, could have also been directly deduced from Proposition \ref{P:monhom} above. This is not surprising since Theorem \ref{T:weissG} represents a rescaled quantitative version of Proposition \ref{P:monhom}, with the intertwining between one result and the other given by the identity \eqref{Wk}. 
\end{rmrk}

%>>>>>>>>>>>>>>>>>>>>>>>>>>>>>>>>

\section{Analysis of the discrepancy in Carnot groups of Heisenberg type}\label{S:examples}

In this section we analyze in more detail, in the setting of groups of Heisenberg type, the notion of discrepancy introduced in Definition \ref{D:discrep} above. Throughout this section we assume that $\bG$ is a Carnot group of step two, with Lie algebra $\bg = V_1\oplus V_2$, where $[V_1,V_1] = V_2, [V_1,V_2] = \{0\}$. We endow $\bg$ with an inner product with respect to which $\{e_1,...,e_m\}$ and $\{\ve_1,...,\ve_k\}$ denote an orthonormal basis of $V_1$ and  $V_2$, respectively. If $X_i(g) = dL_g(e_i)$, $i=1,...,m$, and $T_\ell = dL_g(\ve_\ell)$, $\ell=1,...,k$, are the left-invariant vector fields generated by such basis, we assume that $\bG$ is endowed with a left-invariant Riemannian
tensor with respect to which the vector fields $X_1,\ldots,X_m$,
$T_1,\ldots,T_k$ are orthonormal at every point. 
The sub-Laplacian with respect to the basis $\{e_1,...,e_m\}$ is given by $\Delta_H = \sum_{i=1}^m X_i^2.$ Consider the analytic mappings $z:\bG\to V_1$, $t:\bG\to V_2$ uniquely defined through the equation $g = \exp(z(g)+t(g))$. For each $i=1,...,m$ we set
\[
z_i = z_i(g) = <z(g),e_i>,
\]
whereas for $s=1,...,k$ we let
\[
t_s = t_s(g) = <t(g),\ve_s>.
\]
We will indicate with $(z,t)\in \bg$ the exponential coordinates of a point $g\in \bG$.

Consider the linear mapping $J:V_2 \to$ End$(V_1)$, defined by 
\begin{equation}\label{J}
<J(t)z,z'> = <[z,z'],t>.
\end{equation}

\begin{dfn}\label{D:Htype}
A Carnot group of step two is called of \emph{Heisenberg type} if for every $t\in V_2$ such that $|t| = 1$, the mapping $J(t)$ is orthogonal. This is equivalent to saying that for every $z, z'\in V_1$, and every $t\in V_2$, one has
\begin{equation}\label{ht}
<J(t)z,J(t)z'> = |t|^2 <z,z'>.
\end{equation}
\end{dfn}
In particular, when $\bG$ is of Heisenberg type, then \eqref{ht} with $z=z'$ yields
\begin{equation}\label{ort}
|J(t)z|^2 = <J(t)z,J(t)z> = |t|^2 |z|^2.
\end{equation}

We will  need the following simple, yet crucial, result about groups of Heisenberg type. It follows from \eqref{ort} by polarization.

\begin{lemma}\label{L:ip}
If $\bG$ is of Heisenberg type, then for every $t, t'\in V_2$ and any $z \in V_1$ one has
\[
<J(t)z,J(t')z> = |z|^2 <t,t'>.
\]
In particular, we obtain for every $\ell, \ell' = 1,...,k$
\[
<J(\ve_\ell)z,J(\ve_{\ell'})z> = |z|^2 \delta_{\ell \ell'}.
\] 
\end{lemma}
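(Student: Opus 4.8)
The plan is to deduce the bilinear identity from the quadratic one \eqref{ort} by a standard polarization argument, the only structural input being that the map $t\mapsto J(t)$ is linear. Since this is precisely how $J$ was introduced (as a linear mapping $J:V_2\to$ End$(V_1)$), no further properties are needed.

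First I would record that linearity of $J$ in its $V_2$-argument gives, for all $t,t'\in V_2$ and $z\in V_1$, the identity $J(t+t')z = J(t)z + J(t')z$. Expanding the squared norm of this sum via the bilinearity of $<\cdot,\cdot>$ yields
\[
|J(t+t')z|^2 = |J(t)z|^2 + 2<J(t)z,J(t')z> + |J(t')z|^2.
\]

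Next I would apply \eqref{ort} to each of the three vectors $t+t'$, $t$, and $t'$, together with the elementary expansion $|t+t'|^2 = |t|^2 + 2<t,t'> + |t'|^2$. This rewrites the left-hand side above as $(|t|^2 + 2<t,t'> + |t'|^2)|z|^2$, and the two outer terms on the right as $|t|^2|z|^2$ and $|t'|^2|z|^2$. Cancelling the matching pure terms leaves $2<J(t)z,J(t')z> = 2<t,t'>|z|^2$, and dividing by $2$ gives exactly the asserted identity. The ``in particular'' statement then follows by specializing $t=\ve_\ell$, $t'=\ve_{\ell'}$ and invoking the orthonormality $<\ve_\ell,\ve_{\ell'}> = \delta_{\ell\ell'}$ of the chosen basis of $V_2$.

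There is no serious obstacle here; the single point worth flagging is that the polarization must be performed in the variable $t\in V_2$, in which $J$ is genuinely linear, and \emph{not} in $z$. The dependence on $z$ enters only quadratically through the factor $|z|^2$ and should be left untouched throughout the computation.
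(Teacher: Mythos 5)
Your proof is correct and is exactly the argument the paper intends: the paper's proof consists of the single remark that the lemma ``follows from \eqref{ort} by polarization,'' and your computation is precisely that polarization, carried out in the $V_2$-variable where $J$ is linear. You have simply written out the details the paper leaves implicit, including the correct observation that the polarization must not be attempted in $z$.
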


\begin{dfn}\label{D:Metivier}
A Carnot group of step two is called a \emph{Metivier group} if there exists a constant $B>0$ such that
\[
|J(t)z| \ge B |z| |t|,\ \ \ \ \ \ z\in V_1,\ t\in V_2.
\]
\end{dfn}

\begin{rmrk}\label{R:metivier}
Clearly, every group of Heisenberg type is a Metivier group. The opposite inclusion is false. For instance (see Remark 3.7.5 in \cite{BLU}), consider $\bG = \R^5 = \R^4_z \times \R_t$ with the group law
\[
(z,t) (z',t') = (z+z',t+t'+\frac{1}{2}<Az,z'>),
\]
where $A$ is the $4\times 4$ skew-symmetric matrix
\[
A = \begin{pmatrix}
0 & - 1 & 0 & 0
\\
1 & 0 & 0 & 0
\\
0 & 0&  0 & - 2
\\
0 & 0 & 2 & 0
\end{pmatrix}.
\]
Since \emph{det}$\ A\not= 0$, the matrix $A$ is non-singular and therefore $\bG$ is a Metivier group. But $\bG$ is not of $H$-type since $A\not\in O(4)$. 
\end{rmrk}

The following expressions of the vector fields $X_i$, and of the sub-Laplacian $\Delta_H$ in exponential coordinates will be useful, see \cite{GV}.

\begin{lemma}\label{L:Xec}
Let $\bG$ be a Carnot group of step two. Then, in the exponential coordinates $(z,t)$ one has
\begin{equation}\label{Xi}
X_i  = \p_{z_i} + \frac 12 \sum_{\ell=1}^k <J(\ve_\ell)z,e_i> \partial_{t_\ell},
\end{equation}
\begin{equation}\label{sl}
\Delta_H = \Delta_z + \frac 14 \sum_{\ell,\ell' = 1}^k <J(\ve_\ell)z,J(\ve_{\ell'})z> \p_{t_\ell}\p_{t_{\ell'}} + \sum_{\ell = 1}^k \p_{t_\ell} \Theta_\ell,
\end{equation}
where $\Delta_z$ represents the standard Laplacian in the variable $z = (z_1,...,z_m)$, and
\begin{equation}\label{thetaell}
\Theta_\ell = \sum_{i=1}^m <J(\ve_\ell)z,e_i> \p_{z_i}.
\end{equation}
In particular, when $\bG$ is of Heisenberg type we obtain
\begin{equation}\label{slH}
\Delta_H = \Delta_z + \frac{|z|^2}{4} \Delta_t  + \sum_{\ell = 1}^k \p_{t_\ell} \Theta_\ell.
\end{equation}
\end{lemma}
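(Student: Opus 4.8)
The plan is to work entirely in exponential coordinates, exploiting the fact that in a step-two group the Baker--Campbell--Hausdorff formula \eqref{BCH} truncates after the first commutator. Indeed, since $[V_1,V_1]=V_2$ and $[V_2,V_1]=\{0\}$ force all brackets of order three and higher to vanish, one has $\exp(\xi)\exp(\eta)=\exp\big(\xi+\eta+\tfrac12[\xi,\eta]\big)$. Writing $g=\exp(z+t)$ with $z=z(g)\in V_1$, $t=t(g)\in V_2$, I would compute the right translation needed to evaluate $X_i f(g)=\frac{d}{ds}\big|_{s=0}f\big(g\exp(se_i)\big)$. Because $[t,e_i]=0$, the truncated formula gives
\[
g\exp(se_i)=\exp\Big(z+se_i+t+\tfrac{s}{2}[z,e_i]\Big),
\]
so the horizontal coordinate of $g\exp(se_i)$ is $z+se_i$ and the vertical coordinate is $t+\tfrac{s}{2}[z,e_i]$.

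To make the vertical increment explicit, I would pair $[z,e_i]$ against the basis $\{\ve_\ell\}$ of $V_2$ and invoke the defining identity \eqref{J} for $J$, which gives $<[z,e_i],\ve_\ell>=<J(\ve_\ell)z,e_i>$. Differentiating in $s$ at $s=0$ and applying the chain rule then yields \eqref{Xi} at once, namely $X_i=\p_{z_i}+\tfrac12\sum_{\ell=1}^k <J(\ve_\ell)z,e_i>\,\p_{t_\ell}$.

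For the sub-Laplacian I would set $a_{i\ell}(z)=<J(\ve_\ell)z,e_i>$, note that each $a_{i\ell}$ is linear in $z$ and independent of $t$, and expand $X_i^2=\big(\p_{z_i}+\tfrac12\sum_\ell a_{i\ell}\p_{t_\ell}\big)^2$. Summing over $i$ produces four groups of terms: the pure second $z$-derivatives give $\Delta_z$; the mixed terms assemble into $\sum_\ell\big(\sum_i a_{i\ell}\p_{z_i}\big)\p_{t_\ell}=\sum_\ell\p_{t_\ell}\Theta_\ell$ with $\Theta_\ell$ as in \eqref{thetaell} (the order of $\p_{t_\ell}$ and $\Theta_\ell$ being immaterial since $a_{i\ell}$ is $t$-independent); and the pure second $t$-derivatives give $\tfrac14\sum_{\ell,\ell'}\big(\sum_i a_{i\ell}a_{i\ell'}\big)\p_{t_\ell}\p_{t_{\ell'}}$, where completeness of the orthonormal basis $\{e_i\}$ of $V_1$ gives $\sum_i a_{i\ell}a_{i\ell'}=<J(\ve_\ell)z,J(\ve_{\ell'})z>$, matching \eqref{sl}. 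The one point requiring care—and the only place where the structure of $J$ enters—is the first-order drift term $\tfrac12\sum_\ell\big(\sum_i\p_{z_i}a_{i\ell}\big)\p_{t_\ell}$ arising from differentiating the coefficients. Since $J(\ve_\ell)$ is skew-symmetric (immediate from \eqref{J} and the antisymmetry of the bracket), its diagonal entries vanish and $\sum_i\p_{z_i}a_{i\ell}=\operatorname{tr}J(\ve_\ell)=0$, so this term drops out entirely; this is what makes \eqref{sl} hold.

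Finally, in the Heisenberg-type case Lemma \ref{L:ip} gives $<J(\ve_\ell)z,J(\ve_{\ell'})z>=|z|^2\delta_{\ell\ell'}$, so the second-order $t$-block collapses to $\tfrac{|z|^2}{4}\sum_\ell\p_{t_\ell}^2=\tfrac{|z|^2}{4}\Delta_t$, yielding \eqref{slH}. The computation is routine throughout; the main obstacle is simply the bookkeeping in expanding $X_i^2$ and summing, together with the observation that the potentially troublesome drift term vanishes by skew-symmetry of $J$.
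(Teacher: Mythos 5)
Your proof is correct and follows essentially the same route as the paper's: the Baker--Campbell--Hausdorff formula (truncated at step two) to express $X_i$ in exponential coordinates and obtain \eqref{Xi} via the defining identity \eqref{J}, then expansion of $\sum_i X_i^2$ for \eqref{sl}, and Lemma \ref{L:ip} for \eqref{slH}. The only difference is one of detail: the paper writes $X_i$ through the structure constants $b^\ell_{ij}=\langle [e_i,e_j],\ve_\ell\rangle$ and dismisses \eqref{sl} as ``a calculation,'' whereas you carry out the bookkeeping explicitly, including the key point that the first-order drift term vanishes because $J(\ve_\ell)$ is skew-symmetric (hence trace-free), which is exactly the skew-symmetry of $[b^\ell_{ij}]$ that the paper records.
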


We will also need the following.

\begin{lemma}\label{L:gauge}
Let $\bG$ be a Carnot group of step two, and consider the gauge $\rho= (|z|^4 + 16 |t|^2)^{1/4}$. Then,
\begin{equation}\label{gradgauge}
\rh = \frac{1}{\rho^6} \left(|z|^6 + 16 |J(t)z|^2\right).
\end{equation}
If $\bG$ is of Heisenberg type, then
\begin{equation}\label{gradgaugeH}
\rh = \frac{|z|^2}{\rho^2}.
\end{equation}
\end{lemma}

\begin{proof}
In what follows we let $r = r(g) = |z|, s = s(g) = |t|$, so that
\[
\rho = (r^4 + 16 s^2)^{1/4}.
\]
Using Lemma \ref{L:Xec} we find
\begin{align}\label{XiN}
X_i \rho & = \frac{1}{r} \frac{\partial \rho}{\partial r} z_i + \frac{1}{2 s} \frac{\partial \rho}{\partial s} \sum_{\ell=1}^k <J(\ve_\ell)z,e_i> t_\ell
\\
& = \frac{1}{r} \frac{\partial \rho}{\partial r} z_i + \frac{1}{2 s} \frac{\partial \rho}{\partial s} <J(t)z,e_i>.
\notag
\end{align}
Using the fact that $<J(t)z,z> = 0$ for every $t\in V_2, z\in V_1$, this gives
\begin{align*}
\rh & = \left(\frac{\partial \rho}{\partial r}\right)^2 + \frac{1}{4s^2}\left(\frac{\partial \rho}{\partial s}\right)^2 \sum_{i=1}^m <J(t)z,e_i>^2
\\
& = \left(\frac{\partial \rho}{\partial r}\right)^2 + \frac{1}{4s^2}\left(\frac{\partial \rho}{\partial s}\right)^2 |J(t)z|^2.
\end{align*}
Now we observe that
\begin{equation}\label{rd}
\frac{\p \rho}{\p r} = \frac{r^3}{\rho^3},\ \ \ \ \ \frac{\partial \rho}{\partial s} =  \frac{8s}{\rho^3}.
\end{equation}
Inserting these formulas in the last equation we obtain \eqref{gradgauge}. When $\bG$ is of Heisenberg type using \eqref{ort} in \eqref{gradgauge}, we obtain \eqref{gradgaugeH}.

\end{proof}

We next recall a beautiful result which is due to Folland for the Heisenberg group $\Hn$, see \cite{F1}, and to Kaplan for groups of Heisenberg type, see \cite{K}. 

\begin{prop}\label{P:FK}
Let $\bG$ be a group of Heisenberg type, and define the constant $C>0$ by the formula
\[
C^{-1} = m(Q-2) \int_{\bG} \frac{dz dt}{\left[(|z|^2+1)^2 + 16 |t|^2\right]^{\frac{Q+2}{4}}}.
\]
Then, the function 
\[
\Gamma(z,t) = \frac{C}{\rho(z,t)^{Q-2}},
\]
is a fundamental solution with singularity at $e$ for the sub-Laplacian associated with the orthonormal basis $\{e_1,...,e_m\}$.
\end{prop}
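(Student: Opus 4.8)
The plan is to show directly, in the sense of distributions, that $-\Delta_H(\rho^{2-Q}) = C^{-1}\delta_e$; since $\rho^{2-Q}$ is locally integrable (its homogeneity degree $2-Q$ exceeds $-Q$), this makes $\Gamma = C\rho^{2-Q}$ a fundamental solution with pole at $e$. Rather than verifying harmonicity away from $e$ and computing a residue separately, I would obtain both at once from a regularization that reproduces the stated constant. Introduce the smooth approximants
\[
u_\epsilon(z,t) = \big((|z|^2 + \epsilon^2)^2 + 16|t|^2\big)^{\frac{2-Q}{4}}, \qquad P_\epsilon = (|z|^2+\epsilon^2)^2 + 16|t|^2,
\]
so that $u_\epsilon = P_\epsilon^{(2-Q)/4} \in C^\infty(\bG)$ (as $P_\epsilon \ge \epsilon^4 > 0$), with $u_\epsilon \le \rho^{2-Q}$ pointwise and $u_\epsilon \uparrow \rho^{2-Q}$ as $\epsilon \to 0^+$.

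The heart of the matter is the exact computation of $\Delta_H u_\epsilon$ in exponential coordinates via the Heisenberg-type expression \eqref{slH}, namely $\Delta_H = \Delta_z + \tfrac{|z|^2}{4}\Delta_t + \sum_{\ell}\partial_{t_\ell}\Theta_\ell$. Since $u_\epsilon$ depends on $z$ only through $|z|^2$, its $z$-gradient is a scalar multiple of $z$, so each first-order term satisfies $\Theta_\ell u_\epsilon \propto \langle J(\ve_\ell)z, z\rangle = 0$ by the skew-symmetry of $J$; the entire cross term therefore drops out and only $\Delta_z u_\epsilon + \tfrac{|z|^2}{4}\Delta_t u_\epsilon$ survives. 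Carrying out these two Laplacians and collecting the most singular contributions (with $a = (Q-2)/4$), the identity $w^2 + 16|t|^2 = P_\epsilon$ for $w = |z|^2 + \epsilon^2$ combines the $P_\epsilon^{-a-2}$ terms into a factor $16a(a+1)|z|^2 P_\epsilon^{-a-1}$, and the relation $Q = m + 2k$ forces the coefficients of $w$ and of $|z|^2$ to match up to the factor $|z|^2 - w = -\epsilon^2$. The outcome I expect is the clean formula
\[
-\Delta_H u_\epsilon = m(Q-2)\,\epsilon^2\, P_\epsilon^{-\frac{Q+2}{4}}.
\]

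With this in hand the remaining steps are routine. A dilation substitution $z = \epsilon z'$, $t = \epsilon^2 t'$ shows that $\int_\bG (-\Delta_H u_\epsilon)\, dg$ is independent of $\epsilon$ and equals $m(Q-2)\int_\bG P_1^{-(Q+2)/4}\, dz\, dt = C^{-1}$; together with nonnegativity and the concentration of $-\Delta_H u_\epsilon$ at $e$ (again from the same scaling), this exhibits $\{-\Delta_H u_\epsilon\}$ as an approximate identity of total mass $C^{-1}$. Testing against $\phi \in C_0^\infty(\bG)$ and integrating by parts (legitimate since $X_i^* = -X_i$ and $\phi$ has compact support), I would write $\langle -\Delta_H u_\epsilon, \phi\rangle = -\int_\bG u_\epsilon \Delta_H \phi\, dg$, then let $\epsilon \to 0^+$: dominated convergence on the left (dominant $\rho^{2-Q}\,|\Delta_H\phi|\in L^1$) and the approximate-identity property on the right give $-\Delta_H(\rho^{2-Q}) = C^{-1}\delta_e$, i.e. $-\Delta_H(C\rho^{2-Q}) = \delta_e$.

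The main obstacle is the second paragraph: the explicit differentiation of $u_\epsilon$ and, above all, recognizing the two cancellations — the vanishing of $\Theta_\ell u_\epsilon$ by skew-symmetry, and the collapse of the second-order terms, which hinges on the Heisenberg-type identity $\langle J(\ve_\ell)z, J(\ve_{\ell'})z\rangle = |z|^2\delta_{\ell\ell'}$ (Lemma \ref{L:ip}, which is what makes \eqref{slH} diagonal) together with $Q = m+2k$. Outside the Heisenberg-type setting the mixed term $\tfrac14\sum_{\ell,\ell'}\langle J(\ve_\ell)z, J(\ve_{\ell'})z\rangle\partial_{t_\ell}\partial_{t_{\ell'}}$ in \eqref{sl} does not reduce to $\tfrac{|z|^2}{4}\Delta_t$, and the exact cancellation fails — which is precisely why a closed-form gauge fundamental solution is special to groups of Heisenberg type.
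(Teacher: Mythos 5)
Your proposal is correct, but note that the paper contains no proof of Proposition \ref{P:FK} at all: it is recalled as a known result, attributed to Folland \cite{F1} for $\Hn$ and to Kaplan \cite{K} for general groups of Heisenberg type. What you have written is, in essence, a reconstruction of that classical Folland--Kaplan argument, and it is sound. The one step you flagged as only ``expected'' does come out exactly as you claim: writing $w=|z|^2+\epsilon^2$, $a=(Q-2)/4$, one computes
\begin{align*}
\Delta_z u_\epsilon &= 16a(a+1)P_\epsilon^{-a-2}w^2|z|^2 - aP_\epsilon^{-a-1}\left(8|z|^2+4mw\right),\\
\frac{|z|^2}{4}\Delta_t u_\epsilon &= 256\,a(a+1)P_\epsilon^{-a-2}|z|^2|t|^2 - 8ka\,|z|^2P_\epsilon^{-a-1},
\end{align*}
and after combining the top-order terms via $w^2+16|t|^2=P_\epsilon$, the coefficient of $|z|^2P_\epsilon^{-a-1}$ is $a\left(16(a+1)-8-8k\right)=4ma$ by $Q=m+2k$, so that $\Delta_H u_\epsilon = 4ma\,P_\epsilon^{-a-1}(|z|^2-w)=-m(Q-2)\,\epsilon^2P_\epsilon^{-(Q+2)/4}$, as you predicted; the vanishing of the terms $\p_{t_\ell}\Theta_\ell u_\epsilon$ by skew-symmetry of $J(\ve_\ell)$, and the diagonalization of the mixed second-order term in \eqref{sl} into $\frac{|z|^2}{4}\Delta_t$ via Lemma \ref{L:ip}, are used exactly where you say they are, and this is indeed where the Heisenberg-type hypothesis enters irreplaceably. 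The remaining steps (the $\delta_\lambda$-scaling $P_\epsilon(z,t)=\epsilon^4P_1(\delta_{1/\epsilon}(z,t))$ giving $\epsilon$-independent mass $C^{-1}$, hence the normalization with $\epsilon=1$ in the statement; the approximate-identity limit; monotone domination $u_\epsilon\le \rho^{2-Q}\in L^1_{loc}$ and integration by parts against $\phi\in C^\infty_0$) are all legitimate as outlined. Compared with the paper's choice to cite the literature, your route buys a self-contained verification of both the precise power $\rho^{2-Q}$ and the explicit constant $C$, at the cost of the coordinate computation above.
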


As a consequence of Proposition \ref{P:FK} we see that, up to a renormalization, the sets $B_r$ defined in \eqref{balls} are now given by $B_r = \{(z,t)\in \bG\mid \rho(z,t)<r\}$, where
\begin{equation}\label{ga}
\rho= (|z|^4 + 16 |t|^2)^{1/4}.
\end{equation}

The next lemma provides an expression for the discrepancy of a function $u$ at $e\in \bG$ in any group of Heisenberg type.

\begin{lemma}\label{L:innerp}
Let $\bG$ be a group of Heisenberg type and let $\rho$ be as in \eqref{ga}. Given a function $u:\bG\to \mathbb R$,  its discrepancy at $e$ is given by
\begin{align*}
E_u = <\nabla_H u,\nabla_H \rho> - \frac{Zu}{\rho} |\nabla_H \rho|^2  = \frac{4}{\rho^3} \sum_{\ell=1}^k t_\ell \Theta_\ell(u),
\end{align*}
where $\Theta_\ell$ is the vector field defined in \eqref{thetaell} above. As a consequence, in a group of Heisenberg type a function $u$ has vanishing discrepancy if and only if
\[
\sum_{\ell=1}^k t_\ell \Theta_\ell(u) \equiv 0.
\]
\end{lemma}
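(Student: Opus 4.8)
The plan is to reduce the statement to a direct computation in exponential coordinates, built on three ingredients already available: the formulas for $X_i$ and $\Theta_\ell$ in Lemma~\ref{L:Xec}, the explicit value $\rh = |z|^2/\rho^2$ from Lemma~\ref{L:gauge}, and the polarization identity of Lemma~\ref{L:ip}. First I would record the horizontal derivatives of the gauge: inserting the derivatives \eqref{rd} into \eqref{XiN} collapses to the clean expression
\[
X_i \rho = \frac{1}{\rho^3}\Big(|z|^2 z_i + 4\langle J(t)z, e_i\rangle\Big),\qquad i=1,\dots,m.
\]
Next I would write down the dilation generator in these coordinates. Since in a step-two group $\delta_\la(z,t)=(\la z,\la^2 t)$, differentiating along \eqref{dZ} gives $Zu = \sum_i z_i \p_{z_i} u + 2\sum_\ell t_\ell \p_{t_\ell} u$. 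Combined with $\rh=|z|^2/\rho^2$, this shows that the term to be subtracted in the discrepancy is $\frac{Zu}{\rho}\rh = \rho^{-3}\,|z|^2 Zu$.

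The core step is to expand $\langle \nh u,\nh\rho\rangle = \sum_{i=1}^m (X_i u)(X_i\rho)$ using \eqref{Xi} together with the formula for $X_i\rho$ above. This produces $\rho^{-3}$ times four sums $A+B+C+D$, where $A = |z|^2\sum_i z_i \p_{z_i}u$, $B = 4\sum_i \langle J(t)z,e_i\rangle \p_{z_i}u$, $C = \tfrac{|z|^2}{2}\sum_\ell (\p_{t_\ell}u)\,\langle J(\ve_\ell)z,z\rangle$, and $D = 2\sum_\ell (\p_{t_\ell}u)\,\langle J(\ve_\ell)z, J(t)z\rangle$. I would then identify each piece. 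Writing $J(t)=\sum_\ell t_\ell J(\ve_\ell)$ by linearity of $J$, term $B$ becomes exactly $4\sum_\ell t_\ell \Theta_\ell(u)$, which is the target quantity. Skew-symmetry of $J(\ve_\ell)$ (equivalently $\langle J(\ve_\ell)z,z\rangle = \langle[z,z],\ve_\ell\rangle = 0$) kills $C$. For $D$, the H-type identity of Lemma~\ref{L:ip} gives $\langle J(\ve_\ell)z, J(t)z\rangle = |z|^2\langle \ve_\ell,t\rangle = |z|^2 t_\ell$, so $D = 2|z|^2\sum_\ell t_\ell \p_{t_\ell}u$. Hence $A+D = |z|^2 Zu$, and subtracting $\rho^{-3}|z|^2 Zu$ cancels $A$ and $D$ and (since $C=0$) leaves precisely $\rho^{-3}B = \frac{4}{\rho^3}\sum_\ell t_\ell \Theta_\ell(u)$, which is the asserted formula. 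The final equivalence follows at once: away from $e$ one has $\rho^{-3}\neq 0$, so $E_u\equiv 0$ if and only if $\sum_\ell t_\ell \Theta_\ell(u)\equiv 0$.

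The computation has no conceptual difficulty, so the main obstacle is purely bookkeeping: correctly expanding the double sum into the four terms and, in term $D$, recognizing the inner product $\sum_i \langle J(\ve_\ell)z,e_i\rangle\langle J(t)z,e_i\rangle$ as $\langle J(\ve_\ell)z, J(t)z\rangle$ so that Lemma~\ref{L:ip} applies. The one place where the H-type hypothesis is genuinely used is exactly this evaluation of term $D$ (and, implicitly, the clean form of $\rh$ from Lemma~\ref{L:gauge}); in a general step-two group term $D$ would not simplify to $2|z|^2 t_\ell$, which is why the conclusion is specific to groups of Heisenberg type.
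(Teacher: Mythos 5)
Your proof is correct and follows essentially the same route as the paper: the same expansion of $\sum_i (X_iu)(X_i\rho)$ into four terms (your $A,B,C,D$ correspond exactly to the paper's $(I),(II),(III),(IV)$), with skew-symmetry of $J(\ve_\ell)$ killing one term, Lemma \ref{L:ip} evaluating another, and the remaining pieces recombining into $\frac{Zu}{\rho}|\nh\rho|^2$. The only cosmetic difference is that you substitute the derivative formulas \eqref{rd} at the outset to get a closed form for $X_i\rho$, whereas the paper carries $\p\rho/\p r$ and $\p\rho/\p s$ symbolically until the end.
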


\begin{proof}
The equations \eqref{Xi} and \eqref{XiN} give
\begin{align*}
& <\nabla_H u, \nabla_H \rho> = \sum_{i=1}^m X_i u X_i \rho
\notag\\
& = \sum_{i=1}^m \left(\frac{\partial u}{\partial z_i} + \frac 12 \sum_{\ell=1}^k <J(\ve_\ell)z,e_i> \frac{\partial u}{\partial t_\ell}\right) \left(\frac{1}{r} \frac{\partial \rho}{\partial r} z_i + \frac{1}{2 s} \frac{\partial \rho}{\partial s} <J(t)z,e_i> \right)
\notag\\
& = \frac{1}{r}\ \frac{\partial \rho}{\partial r} \sum_{i=1}^m z_i \frac{\partial u}{\partial z_i}  + \frac{1}{2 s} \frac{\partial \rho}{\partial s} \sum_{i=1}^m \frac{\partial u}{\partial z_i} <J(t)z,e_i> 
\notag\\
& + \frac{1}{2 r} \frac{\partial \rho}{\partial r} \sum_{\ell=1}^k \sum_{i=1}^m z_i <J(\ve_\ell)z,e_i> \frac{\partial u}{\partial t_\ell}
\notag\\
& + \frac{1}{4 s} \frac{\partial \rho}{\partial s} \sum_{\ell=1}^k \left(\sum_{i=1}^m <J(\ve_\ell)z, e_i><J(t)z, e_i>\right) \frac{\partial u}{\partial t_\ell}
\\
& = (I) + (II) + (III) + (IV).
\end{align*}
We now have
\[
(III) = \frac{1}{2 r} \frac{\partial \rho}{\partial r} \sum_{\ell=1}^k \sum_{i=1}^m z_i <J(\ve_\ell)z,e_i> \frac{\partial u}{\partial t_\ell} = \frac{1}{2 r} \frac{\partial \rho}{\partial r} \sum_{\ell=1}^k  <J(\ve_\ell)z,z> \frac{\partial u}{\partial t_\ell} = 0.
\]
 On the other hand, we obtain from Lemma \ref{L:ip}
\begin{align*}
(IV) & = \frac{1}{4 s} \frac{\partial \rho}{\partial s} \sum_{\ell =1}^k \left(\sum_{i=1}^m <J(\ve_\ell)z, e_i><J(t)z,e_i>\right) \frac{\partial u}{\partial t_\ell}
\\
& = \frac{1}{4 s} \frac{\partial \rho}{\partial s} \sum_{\ell =1}^k <J(t)z,J(\ve_\ell)z> \frac{\partial u}{\partial t_\ell}.
\end{align*}

\[
= \frac{r^2}{4 s} \frac{\partial \rho}{\partial s} \sum_{\ell=1}^k t_{\ell} \frac{\partial u}{\partial t_\ell} = \frac{2}{\rho} |\nabla_H \rho|^2  \sum_{\ell=1}^k  t_{\ell} \frac{\partial u}{\partial t_\ell},
\]
where we have used \eqref{rd} and \eqref{gradgaugeH}.
We thus conclude that
\[
(I) + (III) + (IV) = \frac{Zu}{\rho} \rh.
\] 
Finally, it is clear from \eqref{thetaell} above that
\[
(II) = \frac{4}{\rho} \sum_{\ell=1}^k \frac{t_\ell}{\rho^2} \Theta_\ell(u).
\]
This completes the proof.

\end{proof}

In view of Lemma \ref{L:innerp} in a group of Heisenberg type we have the following complete characterization of functions with vanishing discrepancy. 

\begin{prop}\label{P:cvd}
Let $\bG$ be a group of Heisenberg type. A harmonic function $u$ in $B_R$ has vanishing discrepancy at $e$ if and only if $u$ solves the Baouendi operator
\[
\mathcal B_1 u = \Delta_z u + \frac{|z|^2}{4} \Delta_t u = 0,\ \ \ \ \ \ \text{in}\ B_R.
\]
\end{prop}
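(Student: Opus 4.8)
The plan is to translate both conditions into statements about the vector fields $\Theta_\ell$ of \eqref{thetaell} and then compare them. Since $\bG$ is of Heisenberg type, Lemma \ref{L:innerp} already identifies the first condition: $u$ has vanishing discrepancy at $e$ if and only if $\sum_{\ell=1}^k t_\ell\,\Theta_\ell u\equiv 0$ in $B_R$. For the second condition I would use the expression \eqref{slH} of the sub-Laplacian in exponential coordinates, which for a group of Heisenberg type reads $\Delta_H u=\mathcal B_1 u+\sum_{\ell=1}^k\p_{t_\ell}\Theta_\ell u$. Since $u$ is harmonic, this gives $\mathcal B_1 u=-\sum_{\ell=1}^k\p_{t_\ell}\Theta_\ell u$, so that $\mathcal B_1 u=0$ if and only if $\sum_{\ell=1}^k\p_{t_\ell}\Theta_\ell u\equiv 0$. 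Thus the proposition reduces to showing that, for a harmonic $u$, the vanishing of $\sum_\ell t_\ell\Theta_\ell u$ is equivalent to the vanishing of $\sum_\ell\p_{t_\ell}\Theta_\ell u$.

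To prove the implication from vanishing discrepancy to $\mathcal B_1 u=0$, I would first treat the model Heisenberg case $k=1$, where it is immediate: $t_1\Theta_1 u\equiv 0$ forces $\Theta_1 u\equiv 0$ by continuity, and since $\Theta_1$ differentiates only in the $z$ variables it commutes with $\p_{t_1}$, whence $\p_{t_1}\Theta_1 u=\Theta_1\p_{t_1}u\equiv 0$ and therefore $\mathcal B_1 u=0$. For general $k$ I would differentiate the identity $\sum_\ell t_\ell\Theta_\ell u\equiv 0$ in each variable $t_m$, obtaining $\Theta_m u+\sum_\ell t_\ell\Theta_\ell\p_{t_m}u\equiv 0$, and then feed these relations back, together with the orthogonality identities of Lemma \ref{L:ip} (and the elementary fact $<J(\ve_\ell)z,z>=0$) and, where needed, the harmonicity of $u$, to extract $\sum_\ell\p_{t_\ell}\Theta_\ell u\equiv 0$. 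The structural inputs here are that the coefficients $<J(\ve_\ell)z,e_i>$ of $\Theta_\ell$ are linear in $z$ and independent of $t$, and that $\Theta_\ell$ commutes with each $\p_{t_m}$ and with $\Delta_t$.

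The step I expect to be the main obstacle is the reverse implication, from $\mathcal B_1 u=0$ back to vanishing discrepancy: the two conditions $\sum_\ell t_\ell\Theta_\ell u\equiv 0$ and $\sum_\ell\p_{t_\ell}\Theta_\ell u\equiv 0$ are a priori independent (one is an orthogonality of the $V_2$-valued field $(\Theta_\ell u)_\ell$ to $t$, the other is its $t$-divergence), so harmonicity must enter in an essential, non-algebraic way; in particular the naive manipulations of the first paragraph close only circularly. My plan here would be to exploit that on a group of Heisenberg type the sub-Laplacian is real-analytic hypoelliptic, expand $u$ in Taylor data in the $t$ variable about $t=0$ (where $\Theta_m u(z,0)\equiv 0$ already follows), and use the coupling between consecutive Taylor coefficients forced by $\mathcal B_1 u=0$ together with the commutation properties of the $\Theta_\ell$ to propagate the vanishing of $\sum_\ell t_\ell\Theta_\ell u$ to all orders. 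I would pay particular attention to whether the commutators $[\Theta_\ell,\Theta_{\ell'}]$, which need not vanish when $k\ge 2$, obstruct this propagation, since that is precisely where the Heisenberg-type (Clifford) relations must intervene, and I would scrutinize low-degree harmonic polynomials as potential test cases before committing to the reverse direction.
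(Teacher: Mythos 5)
Your reduction of Proposition \ref{P:cvd} to the equivalence, for harmonic $u$, of $\sum_{\ell}t_\ell\Theta_\ell u\equiv 0$ with $\sum_{\ell}\p_{t_\ell}\Theta_\ell u\equiv 0$ is exactly what the paper does: its entire proof is the sentence that the result is an ``immediate consequence of Lemma \ref{L:innerp} and of \eqref{slH}'', i.e.\ it treats those two conditions as interchangeable. Your suspicion that this identification is not immediate, and that the reverse implication is the dangerous one, is correct --- but the problem is not a missing argument: the reverse implication is \emph{false}, and the sanity check you propose in your last sentence (low-degree harmonic polynomials) exposes this at once. In $\Hn$ with $n=1$ take $u(x,y,t)=xy$. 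By \eqref{slHn} one has $\Delta_H u=\Delta_z u+\frac{|z|^2}{4}\p_{tt}u+\p_t\Theta u$ with $\Theta u=x^2-y^2$; all three terms vanish, so $u$ is harmonic, and clearly $\mathcal B_1 u=\Delta_z u=0$ in every $B_R$. Yet Lemma \ref{L:innerp} gives
\[
E_u=\frac{4t}{\rho^3}\,\Theta u=\frac{4t\,(x^2-y^2)}{\rho^3}\not\equiv 0,
\]
so $u$ does not have vanishing discrepancy. (The same occurs in any group of Heisenberg type: take any $t$-independent, $z$-harmonic quadratic $q$ with $\Theta_\ell q\not\equiv 0$ for some $\ell$; then $\Delta_H q=\mathcal B_1 q=0$ while $E_q\not\equiv0$.) Consequently no Taylor-propagation scheme can close that direction; in particular the parenthetical claim in your plan that ``$\Theta_m u(z,0)\equiv 0$ already follows'' is precisely what fails for this $u$. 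The only statement that survives is the one-directional remark made carefully in Section \ref{S:intro}: vanishing discrepancy implies $\mathcal B_1 u=0$, not conversely. So the ``if'' half cannot be repaired by you or by anyone; the defect lies in the statement, and the paper's one-line proof silently conflates the two conditions above.

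Your forward direction also has a genuine gap when $k\ge 2$. The manipulation you describe is circular: differentiating $\sum_\ell t_\ell\Theta_\ell u=0$ in $t_m$, differentiating once more and summing over $m$ returns only the tautology $\sum_\ell\p_{t_\ell}\Theta_\ell u=\sum_\ell\p_{t_\ell}\Theta_\ell u$, because $\sum_\ell\p_{t_\ell}\Theta_\ell u=-\tfrac12\big(\sum_\ell t_\ell\Theta_\ell\big)\Delta_t u$ holds identically once the discrepancy vanishes. Moreover, for $k\ge 2$ vanishing discrepancy does \emph{not} split into the separate conditions $\Theta_\ell u\equiv 0$, even for harmonic functions: in the $m=4$, $k=2$ group of Section \ref{S:examples}, the harmonic function $u=t_2(x_1^2-x_2^2+x_3^2-x_4^2)+2t_1(x_1x_4-x_2x_3)$ satisfies $\sum_\ell t_\ell\Theta_\ell u\equiv 0$ while $\Theta_1 u=-4t_2(x_1x_2+x_3x_4)\not\equiv 0$. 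Hence the $k=1$ argument (which you do complete correctly) does not extend verbatim, and a proof of the forward implication for $k\ge 2$ must use harmonicity in an essential, degree-by-degree way; your write-up leaves this open.
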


\begin{proof}
Immediate consequence of Lemma \ref{L:innerp} and of \eqref{slH}.

\end{proof} 

Two sufficient, but by no means necessary, conditions for vanishing discrepancy are expressed by the following result. We recall that in a group of Heisenberg type (or even, more in general, in a Metivier group) the complex structure induced by the map $J$ forces the first layer $V_1$ of the Lie algebra to be even dimensional, i.e., $m = 2 n$ for some $n\in \mathbb N$. Let us write $z \in V_1$ as $z = (x,y)\cong \R^{2n}$, and indicate with $w_i = (x_i,y_i)\in \R^2, i=1,...,n$, so that
 \[
r_i = |w_i| = \sqrt{x_i^2 + y_i^2},\ \ \ \ \ \ \ i=1,...,n.
\]

\begin{dfn}\label{D:cylpoly}Let $\bG$ be a group of Heisenberg type and $u:\bG\rightarrow \R$.
 \begin{itemize}
  \item[(a)] We say that $u$ has \emph{cylindrical symmetry} if in the exponential coordinates one has
  \begin{equation*}
   u(g)=\vf(|z|,t)
  \end{equation*}
  for some function $\vf:[0,\infty)\times V_2\rightarrow \R$.

  \item[(b)] We say that $u$ has \emph{polyradial symmetry} if instead one has
  \begin{equation*}
   u(g)=\vf(|w_1|,\ldots,|w_n|,t)
  \end{equation*}
  for some function $\vf:[0,\infty)^n \times V_2\rightarrow \R$.
 \end{itemize}
\end{dfn}

\begin{prop}\label{P:innerp2}
Let $\bG$ be a group of Heisenberg type. 
\begin{enumerate}
 \item[(a)] If $u$ has cylindrical symmetry, then 
 \[
<\nh u,\nh \rho> = \frac{Zu}{\rho} \rh,
\]
and therefore $E_u \equiv 0$.

\item[(b)] In the case of the Heisenberg group $\Hn$, if we make the weaker assumption that $u$ is a polyradial function, then $E_u\equiv 0$.
\end{enumerate}
\end{prop}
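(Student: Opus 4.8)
The plan is to reduce both assertions to the criterion furnished by Lemma \ref{L:innerp}: in a group of Heisenberg type the discrepancy at $e$ is
\[
E_u = \frac{4}{\rho^3} \sum_{\ell=1}^k t_\ell \Theta_\ell(u),
\]
so that $E_u\equiv 0$ precisely when $\sum_{\ell=1}^k t_\ell \Theta_\ell(u) \equiv 0$. Hence everything comes down to understanding how the vector fields $\Theta_\ell = \sum_{i=1}^m <J(\ve_\ell)z,e_i> \p_{z_i}$ of \eqref{thetaell} act on functions with the respective symmetries. Note that no harmonicity of $u$ is needed, since Lemma \ref{L:innerp} holds for arbitrary $u$.

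For part (a) I would in fact prove the stronger statement that $\Theta_\ell(u)\equiv 0$ for each individual $\ell$. If $u(g)=\vf(|z|,t)$ has cylindrical symmetry, then writing $r=|z|$ the chain rule gives $\p_{z_i}u = \vf_r(r,t)\,z_i/r$, so that
\[
\Theta_\ell(u) = \frac{\vf_r}{r} \sum_{i=1}^m z_i <J(\ve_\ell)z,e_i> = \frac{\vf_r}{r} <J(\ve_\ell)z,z>.
\]
The bracket vanishes because $J(\ve_\ell)$ is skew-symmetric: from the defining relation \eqref{J} one has $<J(t)z,z> = <[z,z],t> = 0$, which is the very identity used to annihilate the term $(III)$ in the proof of Lemma \ref{L:innerp}. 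Therefore $\Theta_\ell(u)\equiv 0$ for every $\ell$, the combination $\sum_\ell t_\ell \Theta_\ell(u)$ vanishes identically, and $E_u\equiv 0$.

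For part (b) the decisive simplification is that on $\Hn$ the center is one-dimensional, i.e. $k=1$, so the sum collapses to the single term $t_1\Theta_1(u)$ and it suffices to show $\Theta_1(u)\equiv 0$. With the standard complex structure acting by $e_{x_j}\mapsto e_{y_j}$, $e_{y_j}\mapsto -e_{x_j}$, a direct computation identifies $\Theta_1$ (up to an immaterial sign) with the angular momentum operator $\Theta = \sum_{j=1}^n (x_j\p_{y_j} - y_j\p_{x_j})$ introduced after \eqref{slHn}. For a polyradial $u=\vf(|w_1|,\dots,|w_n|,t)$ with $r_j=|w_j|=\sqrt{x_j^2+y_j^2}$, the chain rule gives $\p_{x_j}u=\vf_{r_j}x_j/r_j$ and $\p_{y_j}u=\vf_{r_j}y_j/r_j$, whence each summand $x_j\p_{y_j}u - y_j\p_{x_j}u = \vf_{r_j}(x_jy_j - y_jx_j)/r_j = 0$. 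Thus $\Theta_1(u)\equiv 0$ and again $E_u\equiv 0$.

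The only genuinely subtle point is conceptual rather than computational, namely to see why part (b) must be confined to $\Hn$. When $k>1$ the individual operators $\Theta_\ell$ need not annihilate a merely polyradial $u$, since $J(\ve_\ell)$ mixes the two-planes $w_j$ among themselves, and there is no reason for the weighted combination $\sum_\ell t_\ell \Theta_\ell(u)$ to cancel. It is precisely the collapse $k=1$ on $\Hn$, which reduces the discrepancy to a single angular derivative, that lets the weaker polyradial hypothesis suffice; in the general Heisenberg-type setting one genuinely needs the full cylindrical symmetry of part (a), which forces each $\Theta_\ell(u)$ to vanish separately.
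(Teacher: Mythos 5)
Your proof is correct and follows essentially the same route as the paper's: both parts reduce to the criterion of Lemma \ref{L:innerp}, with part (a) killed by the chain rule together with the skew-symmetry identity $<J(\ve_\ell)z,z>=0$, and part (b) by the collapse to the single angular operator $\Theta$ on $\Hn$ annihilating polyradial functions. Your closing remark on why $k=1$ is essential for (b) is also consistent with the paper, which makes the same point via an explicit counterexample in a six-dimensional Heisenberg-type group immediately after the proposition.
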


\begin{proof}
If $u(g) = \vf(r,t)$, with $r = |z|$, we have
\[
\partial_{z_i} u = \frac{1}{r} \frac{\partial \vf}{\partial r} z_i, \quad\quad\quad\quad i = 1,...,m,
\]
which gives
\[
\Theta_\ell u = \sum_{i=1}^m <J(\ve_\ell)z,e_i> \p_{z_i} u = \frac{1}{r} \frac{\partial \vf}{\partial r} <J(\ve_\ell)z,z> = 0,\ \ \ \ \ \ell = 1,...,k.
\]
The conclusion of (a) follows from Lemma \ref{L:innerp}. 

If $\bG=\Hn$ for some $n$, there is only one $\Theta_\ell$, given in equation \eqref{slHn} as
\begin{equation}
 \label{E:ThetaH}\Theta=\sum_{j=1}^n (x_j\partial_{y_j}-y_j\partial_{x_j}).
\end{equation}
If in this case we assume that $u(g) = \vf(|w_1|,...,|w_n|,t)$, then for $j=1,2,\ldots,n$,
\begin{align}
\label{E:polyderivx}\partial_{x_j} u &= \frac{\p \vf}{\p r_j} \frac{x_j}{r_j}\\
\label{E:polyderivy}\partial_{y_j} u &= \frac{\p \vf}{\p r_j} \frac{y_j}{r_j}.
\end{align}
Inserting the information from \eqref{E:polyderivx} and \eqref{E:polyderivy} into \eqref{E:ThetaH}, we have
\begin{align*}
 \Theta u&=\sum_{j=1}^n x_jy_j\frac{\p \vf}{\p r_j}-y_jx_j\frac{\p \vf}{\p r_j}=0.
\end{align*}
Again, the result follows from Lemma \ref{L:innerp}.
\end{proof}

It is somewhat surprising that for general groups of Heisenberg type, it is not necessarily true that polyradial functions have zero discrepancy. This is illustrated by the following example, which is itself inspired by an example found in chapter 18 of \cite{BLU}.

Let $V_1=\R^4$ and $V_2=\R^2$. Choose a basis $\{e_1,e_2,e_3,e_4\}$ of $V_1$, and $\{\ve_1,\ve_2\}\in V_2$, and let $<\cdot,\cdot>$ be an inner product on $\mathfrak{g}=V_1\oplus V_2$ which makes the set $\{e_1,e_2,e_3,e_4,\ve_1,\ve_2\}$ orthonormal. We create a Lie algebra on $\mathfrak{g}$ by requiring that
 \begin{align*}
  [e_1,e_2]&=-[e_2,e_1]=[e_3,e_4]=-[e_4,e_3]=\ve_1\\
  -[e_1,e_3]&=[e_3,e_1]=[e_2,e_4]=-[e_4,e_2]=\ve_2.
 \end{align*}
 and all other brackets are equal to zero. Since $[\mathfrak{g},\mathfrak{g}]=V_2$ and $[V_2,\mathfrak{g}]=\{0\}$, the bracket of any three vectors in $\mathfrak{g}$ is zero. Hence the Jacobi identity is trivially satisfied and the resulting Lie algebra is nilpotent of step 2. By the results of section 2.2 in \cite{BLU}, this generates a Lie group $\bG$ with Lie algebra isomorphic to $\mathfrak{g}$. By the very definition of the bracket, $[V_1,V_1]=V_2$ and $[V_1,V_2]=[V_2,V_2]=\{0\}$, so $\bG$ is in fact a step 2 Carnot group with topological dimension $n=6$ and homogeneous dimension $Q=4+2\cdot 2=8$. Now,
 \begin{align*}
  <J(\ve_1)e_1,e_k>&= \delta_{2k}&&<J(\ve_1)e_2,e_k>=-\delta_{1k},
  \\
  <J(\ve_1)e_3,e_k>&= \delta_{4k}&&<J(\ve_1)e_4,e_k>=-\delta_{3k},
  \\
  <J(\ve_2)e_1,e_k>&= -\delta_{3k}&&<J(\ve_2)e_2,e_k>=\delta_{4k},
  \\
  <J(\ve_2)e_3,e_k>&= \delta_{1k}&&<J(\ve_2)e_4,e_k>=-\delta_{2k}.
 \end{align*}
 %%%%%%%%%%%%%%%%
 
 Therefore, given  $t=t_1\ve_1+t_2\ve_2$, the matrix for $J(t)$ in the basis $\{e_1,e_2,e_3,e_4\}$ is given by
 \begin{equation*}
  \left(\begin{matrix}0&-t_1&t_2&0\\t_1&0&0&-t_2\\-t_2&0&0&-t_1\\0&t_2&t_1&0\end{matrix}\right).
 \end{equation*}
This is clearly orthogonal if $|t|=1$, hence $\bG$ is of Heisenberg type. 
%%%%%%%%%%%%%%%%%%%Therefore, the matrices for $J(\ve_1)$ and $J(\ve_2)$ in the basis $\{e_1,e_2,e_3,e_4\}$ are given by
 %\begin{equation*}
 % J(\ve_1)=\left(\begin{matrix}0&-1&0&0\\1&0&0&0\\0&0&0&-1\\0&0&1&0\end{matrix}\right)\quad\quad J(\ve_2)=\left(\begin{matrix}0&0&1&0\\0&0&0&-1\\-1&0&0&0\\0&1&0&0\end{matrix}\right)
% \end{equation*}
% and these are both clearly orthogonal. Since $\ve_1$ and $\ve_2$ span $V_2$, it follows that $\bG$ is of Heisenberg type. 
Let us compute the vector fields $\Theta_\ell$, $\ell=1,2$ for the group $\bG$. For \[
 z=x_1e_1+x_2e_2+x_3e_3+x_4e_4\in \mathfrak{g},
 \]
  one has
 \begin{equation*}
  \left(\begin{matrix}0&-1&0&0\\1&0&0&0\\0&0&0&-1\\0&0&1&0\end{matrix}\right)\left(\begin{matrix}x_1\\x_2\\x_3\\x_4\end{matrix}\right)=\left(\begin{matrix}-x_2\\x_1\\-x_4\\x_3\end{matrix}\right)\quad\quad \left(\begin{matrix}0&0&1&0\\0&0&0&-1\\-1&0&0&0\\0&1&0&0\end{matrix}\right)\left(\begin{matrix}x_1\\x_2\\x_3\\x_4\end{matrix}\right)=\left(\begin{matrix}x_3\\-x_4\\-x_1\\x_2\end{matrix}\right).
 \end{equation*}
 so that, by Lemma \ref{L:Xec},
 \begin{align}
 \Theta_1&=-x_2\partial_{x_1}+x_1\partial_{x_2}-x_4\partial_{x_3}+x_3\partial_{x_4}\label{E:Tpc1},\\
 \Theta_2&=x_3\partial_{x_1}-x_4\partial_{x_2}-x_1\partial_{x_3}+x_2\partial_{x_4}\label{E:Tpc2}.
 \end{align}

 Consider the function $u:\bG\rightarrow \R$ given in exponential coordinates by
 \begin{equation*}
  u(z,t)=x_1^2+x_3^2=|w_1|^2.
 \end{equation*}
 By (b) in Definition \ref{D:cylpoly}, $u$ is a polyradial function (we recall that $x=(x_1,x_2)$ and $y=(x_3,x_4)$ in this setting). Using \eqref{E:Tpc1} and \eqref{E:Tpc2}, we have
 \begin{equation*}
  t_1\Theta_1 u+t_2\Theta_2=-2t_1(x_1x_2+x_3x_4)\not\equiv 0.
 \end{equation*}
That the function $u$ does not have vanishing discrepancy now follows from Lemma \ref{L:innerp}.

It is interesting to note that $\Theta_2u=0$ for the function $u$ above. This might suggest that by pairing the variables in a different fashion may lead to a polyradial-type function with vanishing discrepancy. This is not the case. If one defines
\begin{equation*}
u_{ij}(z,t)=x_i^2+x_j^2\quad\quad i,j=1,2,3,4,\ i\ne j,
 \end{equation*}
then it is easy to check that $t_1\Theta_1+t_2\Theta_2$ does not annihilate any of these functions.

\section{Monotonicity formulas for the Baouendi operators $\Ba$}\label{S:bg}

In this final section we turn our attention to the \emph{Baouendi operators} 
\begin{equation}\label{bg}
 \Ba=\Delta_z+\frac{|z|^{2\alpha}}{4}\Delta_t,\ \ \ \ \ \ \alpha >0.
\end{equation}
As we have seen in Section \ref{S:examples}, when $\alpha = 1$ these operators are intimately connected to the sub-Laplacians in groups of Heisenberg type. We recall that in the paper \cite{G} the first named author had introduced a frequency function associated with $\Ba$, and proved that such frequency is monotone nondecreasing on solutions of $\Ba u = 0$, see Theorem \ref{T:almgrenBa} below. A version of this Almgren type monotonicity formula for $\Ba$ played an extensive role also in the mentioned recent work \cite{CSS} on the Signorini problem. In this section we establish some new monotonicity properties of the operators $\Ba$, and use them to extract some new information about the local and global nature of solutions of $\Ba u = 0$.
 
We  let $m$ and $k$ be fixed positive integers, $z\in \R^m$, $t\in \R^k$, and we denote $N=m+k$. We observe that $\Ba$ is not translation invariant in $\R^N$. However, it is invariant with respect to the translations along the $k$-dimensional subspace $\mathcal M = \{0\}\times \R^k$. Also, $\Ba$ can be written in two useful ways. As a divergence form operator, $\Ba u=\operatorname{div}(A\nabla u)$, where $A=A_\alpha$ is the $N\times N$ block matrix given by
\begin{align*}
 A=\left(\begin{matrix}
        I_m & 0\\
        0 & \frac{|z|^{2\alpha}}{4}I_k
       \end{matrix}
\right).
\end{align*}
The operator can also be written as a sum-of-squares of not necessarily smooth vector fields. If we in fact denote
\begin{align}\label{XBa}
 X_j=\begin{cases}
      \partial_{z_j}&j=1,2,\ldots,m\vspace{0.2cm}\\
      \displaystyle\frac{|z|^\alpha}{2}\partial_{t_{j-m}}&j=m+1,m+2,\ldots,m+k=N,
     \end{cases}
\end{align}
then clearly
\begin{align}
 \Ba=\sum_{j=1}^N X_j^2.
\end{align}
It thus follows by H\"{o}rmander's theorem that $\Ba$ is hypoelliptic if $\alpha=2\ell$ is an even integer. 
We equip $\R^N$ with the following non-isotropic dilations
\begin{align}\label{dilB}
 \delta_\lambda(z,t)&=(\lambda z,\lambda^{\alpha+1}t),\ \ \lambda>0.
\end{align}
We say that a function $u$ is $\delta_\la$-homogeneous (or simply, homogeneous) of degree $\kappa$ if 
\[
u(\delta_\la(z,t)) = \la^\kappa u(z,t),\ \ \ \ \ \la>0.
\]
It is straightforward to verify that $\Ba$ is $\delta_\la$-homogeneous of degree two, i.e., 
\begin{equation*}
 \Ba(\delta_\lambda \circ u)=\lambda^2\delta_\lambda\circ (\Ba u).
\end{equation*}

Functions which satisfy $\Ba u=0$ may have a certain degree of singularity along the manifold $\mathcal M = \{0\}\times \R^k$. It is thus necessary to introduce the following classes of ``smooth  functions''
\begin{align*}
 \Gamma^1_\alpha(\Omega)=\{f\in C(\Omega)\mid f,X_jf\in C(\Omega)\},
\end{align*}
where the derivatives taken are weak derivatives. We also set
\[
\Gamma^2_\alpha(\Omega)=\{f\in C(\Omega)\mid f,X_jf\in \Gamma^1_\alpha(\Omega)\}
\]
Thus, solutions to $\Ba u=0$ in $\Omega$ are taken to be of class $\Gamma^2_\alpha(\Omega)$.

The infinitesimal generator of the dilations \eqref{dilB} is given by
\begin{equation}\label{Za}
 Z_\alpha=\sum_{i=1}^m z_i\partial_{z_i} + (\alpha+1)\sum_{j=1}^k t_j\partial_{t_j}.
\end{equation}
It is easy to verify that $u\in \Gamma^1_\alpha$ is homogeneous of degree $\kappa$ if and only if
\begin{equation}\label{Zak}
\Za u = \kappa u.
\end{equation}
We note that 
\begin{align*}
 d(\delta_\lambda(z,t))&=\lambda^{m+(\alpha+1)k}\ dz\ dt,
\end{align*}
which motivates the definition of the \emph{homogeneous dimension} for the number
\begin{align}\label{Qa}
 Q=Q_\alpha=m+(\alpha+1)k.
\end{align}

In the study of the operators \eqref{bg} the following pseudo-gauge on $\R^{N}$ plays an interesting role:
\begin{align}\label{ra}
 \rho_\alpha(z,t) & = \left(|z|^{2(\alpha+1)}+4(\alpha+1)^2|t|^2\right)^{\frac1{2(\alpha+1)}}.
\end{align}
We emphasize that this is not a true gauge, because there is no underlying group structure associated to $\Ba$. Accordingly, the ball and sphere centered at the origin with radius $r>0$ are respectively defined as
\begin{align}\label{BSr}
 B_r&=\{(z,t)\in \R^N \mid \rho(z,t)<r\},\ \ \ \ \ S_r = \p B_r.
\end{align}
In \cite{G}, the first named author proved that, with $C_\alpha>0$ given by
\[
C_\alpha^{-1} = (m+\alpha-1)(Q-2) \int_{\R^N} \frac{|z|^{\alpha -1} dz dt}{\left[(|z|^{\alpha+1} + 1)^2 + 4(\alpha +1)^2 |t|^2\right]^{\frac{Q+2\alpha}{2(\alpha+1)}}},
\]
 the function 
\begin{equation}\label{Ga}
 \Gamma(z,t)=\frac{C_{\alpha}}{\ra(z,t)^{Q-2}}
\end{equation}
is a fundamental solution for $-\Ba$ with singularity at $(0,0)$. Since, as we observed above, the operator is invariant with respect to translations along $\mathcal M = \{0\}\times \R^k$, this gives a fundamental solution for this entire subspace of $\R^N$.

For $u, v\in \Gamma^1_\alpha(\R^{N})$, we define the $\alpha$-gradient of $u$ to be
\begin{equation*}
 \nabla_\alpha u=\nabla_z u+\frac{|z|^\alpha}{2}\nabla_t u,
\end{equation*}
and we set
\[
<\na u,\na v> = <\nabla_z u,\nabla_z v> + \frac{|z|^{2\alpha}}{4} <\nabla_t u,\nabla_t v>.
\]
The square of the length of $\nabla_\alpha u$ is
\begin{equation*}
 |\nabla_\alpha u|^2=|\nabla_z u|^2+\frac{|z|^{2\alpha}}{4}|\nabla_t u|^2.
\end{equation*}
The following lemma, collects the identities (2.12)-(2.14) in \cite{G}.

\begin{lemma}\label{l:gradalpharho}
 Given a function $u$ one has in $\R^N\setminus\{0\}$,
 \begin{equation}\label{nara}
  \psi_{\alpha} \overset{def}{=} |\na \ra|^2 = \frac{|z|^{2\alpha}}{\ra^{2\alpha}},
 \end{equation}
and
 \begin{equation}\label{nara2}
 <\na u,\na \ra> = \frac{\Za u}{\ra} \psi_\alpha.
 \end{equation}
 \end{lemma}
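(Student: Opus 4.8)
The plan is to reduce both identities to an explicit computation of the Euclidean gradients of $\ra$, after which everything follows by substitution into the definitions of $|\na \cdot|^2$ and $<\na\cdot,\na\cdot>$. First I would eliminate the fractional exponent by working with $P := \ra^{2(\alpha+1)} = |z|^{2(\alpha+1)}+4(\alpha+1)^2|t|^2$, whose gradients are immediate: $\nabla_z P = 2(\alpha+1)|z|^{2\alpha}z$ and $\nabla_t P = 8(\alpha+1)^2 t$. Since the chain rule gives $\nabla P = 2(\alpha+1)\ra^{2\alpha+1}\nabla \ra$, dividing yields the clean formulas
\[
\nabla_z \ra = \frac{|z|^{2\alpha}z}{\ra^{2\alpha+1}},\qquad \nabla_t \ra = \frac{4(\alpha+1)t}{\ra^{2\alpha+1}},
\]
valid on $\R^N\setminus\{0\}$.

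For \eqref{nara} I would simply substitute these into $|\na \ra|^2 = |\nabla_z \ra|^2 + \tfrac{|z|^{2\alpha}}{4}|\nabla_t \ra|^2$, factor out $|z|^{2\alpha}/\ra^{4\alpha+2}$, and recognize that the remaining bracket $|z|^{2(\alpha+1)}+4(\alpha+1)^2|t|^2$ is exactly $P = \ra^{2(\alpha+1)}$; the powers of $\ra$ then collapse to give $\psi_\alpha = |z|^{2\alpha}/\ra^{2\alpha}$.

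For \eqref{nara2} I would expand $<\na u,\na \ra> = <\nabla_z u,\nabla_z \ra> + \tfrac{|z|^{2\alpha}}{4}<\nabla_t u,\nabla_t \ra>$ with the same formulas. The $z$-term contributes $\frac{|z|^{2\alpha}}{\ra^{2\alpha+1}}\sum_i z_i\partial_{z_i}u$, while in the $t$-term the weight $\tfrac{|z|^{2\alpha}}{4}$ combines with the factor $4(\alpha+1)$ coming from $\nabla_t\ra$ to produce $\frac{|z|^{2\alpha}}{\ra^{2\alpha+1}}(\alpha+1)\sum_j t_j\partial_{t_j}u$. The key observation is that the sum of these two angular pieces is precisely $\Za u$ by the definition \eqref{Za} of the generator. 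Pulling out the common prefactor $|z|^{2\alpha}/\ra^{2\alpha+1} = \psi_\alpha/\ra$ (using \eqref{nara}) gives $<\na u,\na \ra> = (\Za u/\ra)\,\psi_\alpha$.

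The argument is entirely elementary and the only real bookkeeping is tracking the exponents of $|z|$ and $\ra$. If there is a ``main obstacle'' it is purely notational: one must check that the weight $|z|^{2\alpha}/4$ built into the $\alpha$-inner product conspires with the factor $4(\alpha+1)^2$ inside $\ra$ to turn the coefficient of the $t$-derivatives into exactly $(\alpha+1)$, which is what reproduces the anisotropic weights appearing in $\Za$.
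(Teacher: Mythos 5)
Your proof is correct and takes essentially the same route as the paper: the lemma is quoted from identities (2.12)--(2.14) of \cite{G}, which are obtained by exactly this direct computation of $\nabla_z \ra = |z|^{2\alpha}z/\ra^{2\alpha+1}$ and $\nabla_t \ra = 4(\alpha+1)t/\ra^{2\alpha+1}$ followed by substitution into the weighted inner product, so that the factor $\tfrac{|z|^{2\alpha}}{4}$ combines with $4(\alpha+1)$ to reproduce the coefficient $(\alpha+1)$ in $\Za$. Your device of differentiating $\ra^{2(\alpha+1)}$ first to avoid the fractional exponent is only a cosmetic streamlining of the same calculation.
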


 Given a function $u\in \Gamma^1_\alpha(B_R)$, we now define, in analogy to what was done in Definition \ref{D:DH} for Carnot groups, the Dirichlet integral, height, and frequency of $u$ in $B_R$, respectively:
\begin{align}
 D(r)&= D(u,r) = \int_{B_r}|\nabla_\alpha u|^2 dzdt,\ \ \ 0<r<R, \label{E:defD}\\
 H(r)&= H(u,r) = \int_{S_r} u^2\frac{\psi_\alpha}{|\nabla \ra|}\ dH_{N-1}\\
 N(r)& = N(u,r) =\frac{rD(r)}{H(r)}.
\end{align}

The following proposition is formula (4.36) in \cite{G}, and it represents the counterpart to Proposition \ref{P:beauty}.
\begin{prop}\label{P:diri}
 Let $u$ be a solution of $\Ba u=0$ in $B_R$. Then for every $0<r<R$,
 \begin{align*}
  D(r)&=\int_{S_r}u\left(\frac{Z_\alpha u}{r}\right)\frac{\psi_\alpha}{|\nabla \ra|}dH_{N-1}.
 \end{align*}
\end{prop}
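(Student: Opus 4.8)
The strategy is to mirror, as closely as possible, the proof of Proposition \ref{P:beauty}, replacing the Carnot-group objects with their Baouendi analogues. The starting point must be a representation formula for a smooth function $\psi$ analogous to \eqref{2psiG}, obtained from the fundamental solution \eqref{Ga}. Concretely, I would begin from the representation formula valid for H\"ormander type vector fields (the analogue of \eqref{psi}), applied to the operator $\Ba = \sum_{j=1}^N X_j^2$ with the $X_j$ as in \eqref{XBa} and the fundamental solution $\Gamma(z,t) = C_\alpha \ra^{-(Q-2)}$. With $\Omega(0,t) = \{(z,t)\in\R^N\mid \Gamma > 1/t\}$ and $t = r^{Q-2}$ one has $\Omega = B_r$, so that for $\psi\in \Gamma^2_\alpha$,
\begin{equation*}
\psi(0) = \int_{S_r} \psi\, \frac{|\na \Gamma|^2}{|\nabla \Gamma|}\, dH_{N-1} - \int_{B_r} \Ba \psi\, \Big[\Gamma - \tfrac{1}{r^{Q-2}}\Big]\, dzdt.
\end{equation*}
Using $\Gamma = C_\alpha \ra^{-(Q-2)}$ and the chain rule exactly as in Proposition \ref{P:beauty}, I would rewrite $|\na\Gamma|^2/|\nabla\Gamma|$ in terms of $\psi_\alpha = |\na\ra|^2$ and $\ra$, producing the constant $(Q-2)/r^{Q-1}$ together with the kernel $\psi_\alpha/|\nabla\ra|$.

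Next I would specialize to $\psi = u^2$ where $\Ba u = 0$. Since $\Ba(u^2) = 2|\na u|^2$ (the operator is a pure sum of squares with no first-order drift, so the product rule gives $\Ba(u^2) = 2u\,\Ba u + 2|\na u|^2 = 2|\na u|^2$), the volume term becomes $2\int_{B_r}|\na u|^2[\ra^{-(Q-2)} - r^{-(Q-2)}]$. This yields the exact analogue of \eqref{usqG}. Differentiating in $r$ and using that $\psi_\alpha$ is homogeneous of degree zero under \eqref{dilB} (which follows from \eqref{nara}, since $|z|$ and $\ra$ both scale linearly so $\psi_\alpha$ is dilation-invariant), I would obtain the analogue of \eqref{2usqG}, namely
\begin{equation*}
\frac{d}{dr}\,\frac{1}{r^{Q-1}} \int_{S_r} u^2\, \frac{\psi_\alpha}{|\nabla\ra|}\, dH_{N-1} = \frac{2}{r^{Q-1}} D(r).
\end{equation*}

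Finally, I would solve this for $D(r)$, giving $D(r) = \tfrac{r^{Q-1}}{2}\frac{d}{dr}\big(r^{1-Q} H(r)\big)$ (recalling $H(r)$ is the surface integral of $u^2\psi_\alpha/|\nabla\ra|$), rescale onto $S_1$ via $(z,t)\mapsto \delta_r(z,t)$ using the correct scaling $dH_{N-1}$ on $S_r$, and differentiate through the integral. By \eqref{Zak} and the chain rule $\frac{d}{dr}u(\delta_r w) = \frac{1}{r}(\Za u)(\delta_r w)$, the derivative lands on the $\Za u/r$ factor, producing $D(r) = \int_{S_r} u\,(\Za u/r)\,\psi_\alpha/|\nabla\ra|\, dH_{N-1}$ after rescaling back. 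The main obstacle I anticipate is bookkeeping the correct powers of $\lambda$ in the surface-measure rescaling: unlike the Carnot-group case, here I have the bare Hausdorff measure $dH_{N-1}$ weighted by $\psi_\alpha/|\nabla\ra|$ rather than the intrinsic perimeter $d\sigma_H$, and I must verify that this combined kernel scales with the homogeneous dimension $Q = m + (\alpha+1)k$ rather than the topological one, and that the singularity of the $X_j$ along $\mathcal M = \{0\}\times\R^k$ does not obstruct the representation formula; since $u\in\Gamma^2_\alpha$ and the fundamental solution is known from \cite{G}, this should go through as it did there.
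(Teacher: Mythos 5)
Your proposal is correct, but it takes a genuinely different route from the paper. The paper's proof (it cites formula (4.36) in \cite{G}; a version of the argument appears in the source as a divergence-theorem computation) is essentially three lines: since $\Ba u=\operatorname{div}(A\nabla u)=0$, one has $|\na u|^2=\operatorname{div}(uA\nabla u)$, so the divergence theorem on $B_r$ gives $D(r)=\int_{S_r}u\langle \nabla u,A\nu\rangle\,dH_{N-1}$ with $\nu=\nabla\ra/|\nabla\ra|$; then the identity \eqref{nara2} of Lemma \ref{l:gradalpharho}, namely $\langle\na u,\na \ra\rangle=\frac{\Za u}{\ra}\,\psi_\alpha$, together with $\ra\equiv r$ on $S_r$, yields the conclusion. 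You instead transplant the proof of Proposition \ref{P:beauty}: representation formula for $\Ba$, specialization to $\psi=u^2$ (your identity $\Ba(u^2)=2|\na u|^2$ for solutions is right), differentiation in $r$, and rescaling; your scaling worry is resolvable, since by the coarea formula the measure $dH_{N-1}/|\nabla\ra|$ on $S_r$ scales with exponent $Q-1$, $Q=m+(\alpha+1)k$, exactly as $d\sigma_H$ does on groups. Two caveats on your route. First, the formula \eqref{psi} of \cite{CGL} cannot be invoked verbatim: $\Ba$ is of H\"ormander type only when $\alpha$ is an even integer, so the representation formula must indeed be taken from \cite{G}, as you anticipate. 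Second, your intermediate identity (the analogue of \eqref{2usqG}) is precisely Lemma \ref{l:hprime} (Lemma 4.1 in \cite{G}), which the paper already quotes; so in effect your argument reduces to that lemma plus differentiation through $\delta_r$. What your approach buys is the explicit parallel with the group case, exhibiting the mean-value structure along the way; what the paper's approach buys is economy and robustness: it needs neither the fundamental solution nor any representation formula, only the pointwise algebraic identity \eqref{nara2}, and it is insensitive to the failure of hypoellipticity for general $\alpha>0$.
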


The next lemma is the analogue of Lemma \ref{L:H'} for the operators $\Ba$.

\begin{lemma}\label{l:hnz}
 Let $u$ be a solution of $\Ba u=0$ in $B_R$. Then, either $u\equiv 0$ in $B_{R}$, or $H(r)\ne 0$ for every $r>0$.
\end{lemma}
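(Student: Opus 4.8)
The plan is to mirror the proof of Lemma \ref{L:nondeg}, using Proposition \ref{P:diri} in place of Proposition \ref{P:beauty}, and then to upgrade the resulting local vanishing to global vanishing by unique continuation. Suppose $H(r)=0$ for some $r\in(0,R)$. Since the integrand $u^2\,\psi_\alpha/|\nabla\ra|$ is nonnegative, its integral over $S_r$ can vanish only if $u^2\,\psi_\alpha/|\nabla\ra|=0$ for $H_{N-1}$-a.e. point of $S_r$. By \eqref{nara} one has $\psi_\alpha=|z|^{2\alpha}/\ra^{2\alpha}$, which is strictly positive on $S_r\setminus\mathcal M$, where $\mathcal M=\{0\}\times\R^k$. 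Because $S_r\cap\mathcal M$ is a $(k-1)$-dimensional sphere, hence $H_{N-1}$-negligible (recall $m\ge 1$), I would conclude that $u=0$ holds $H_{N-1}$-a.e. on $S_r$.

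The next step is to feed this into Proposition \ref{P:diri}. The integrand there, $u\,(\Za u/r)\,\psi_\alpha/|\nabla\ra|$, carries the factor $u$, which vanishes $H_{N-1}$-a.e. on $S_r$; on the negligible set $S_r\cap\mathcal M$ the factor $\psi_\alpha$ vanishes in any case, so no integrability issue coming from $\Za u$ near $\mathcal M$ can survive. Hence $D(r)=\int_{B_r}|\na u|^2\,dz\,dt=0$, which forces $|\na u|\equiv 0$ in $B_r$. Since $|\na u|^2=|\nabla_z u|^2+\tfrac{|z|^{2\alpha}}4|\nabla_t u|^2$, this says $\nabla_z u\equiv 0$ throughout $B_r$ and $\nabla_t u=0$ wherever $z\ne 0$. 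Each $z$-slice of $B_r$ is a Euclidean ball, hence connected, so $\nabla_z u\equiv 0$ gives $u(z,t)=\phi(t)$ on $B_r$; and since every $t$ in the (connected, open) projection of $B_r$ onto $\R^k$ occurs together with some $z\ne 0$, the relation $\nabla_t u=0$ forces $\phi$ to be constant. Thus $u$ is constant on $B_r$, and as $u=0$ on $S_r$ the constant is $0$ by continuity, i.e. $u\equiv 0$ in $\overline{B_r}$.

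Finally, to pass from $u\equiv 0$ in $B_r$ to $u\equiv 0$ in the whole connected ball $B_R$, I would invoke the \emph{strong unique continuation property} for $\Ba$. This is a consequence of the Almgren-type monotonicity formula for the frequency associated with $\Ba$ established in \cite{G} (see Theorem \ref{T:almgrenBa}): a solution that vanishes on the nonempty open set $B_r$ vanishes to infinite order at the origin, and sucp then yields $u\equiv 0$ on $B_R$. This last propagation step — rather than the essentially algebraic computation of the first two paragraphs — is the delicate point, since it is the only place where genuine analytic input beyond the present excerpt is needed; the remaining care is purely in handling the degeneracy locus $\{z=0\}$, where $\psi_\alpha$ vanishes, which is why I isolate the negligibility of $S_r\cap\mathcal M$ at the very start.
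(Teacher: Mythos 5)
Your first two paragraphs are, in substance, exactly the paper's own proof: the paper dispatches this lemma by declaring it ``almost identical'' to Lemma \ref{L:nondeg}, with Proposition \ref{P:diri} in place of Proposition \ref{P:beauty}, and with the simplification that $|\na u|^2=0$ gives $\nabla_z u=0$ and $\nabla_t u=0$ directly (no bracket-generating argument needed); your extra care about the degeneracy locus --- the $H_{N-1}$-negligibility of $S_r\cap\mathcal M$ and the slice-connectedness argument showing that $\nabla_z u\equiv 0$ plus $\nabla_t u=0$ off $\{z=0\}$ forces $u$ to be constant --- merely makes explicit what the paper leaves implicit. Where you genuinely diverge is your third paragraph, and the divergence is to your credit: the argument the paper gives only yields $u\equiv 0$ in $B_r$, whereas the statement asserts $u\equiv 0$ in all of $B_R$, and the paper is silent on this propagation step. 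Your appeal to the strong unique continuation property from \cite{G} is legitimate, since that result is established there independently of the present lemma. One caveat on how you phrase it: you should not present the sucp as a consequence of Theorem \ref{T:almgrenBa} \emph{as stated in this paper}, because its hypothesis (that $u$ vanish identically on no ball $B_r$) fails precisely for the function under consideration, and the very well-definedness of the frequency in that theorem rests on the local dichotomy you prove in your first two paragraphs; the correct logical order is: local dichotomy first, then monotonicity of $N$ on the interval where $H\ne 0$, then doubling and sucp, or simply cite \cite{G} as a black box. If you want to avoid the frequency machinery entirely, an elementary propagation also works: $\Ba$ has real-analytic coefficients and is uniformly elliptic on $B_R\setminus\mathcal M$, so $u$ is real-analytic there; it vanishes on an open subset of every connected component of $B_R\setminus\mathcal M$ (each component meets $B_r$), hence vanishes on all of $B_R\setminus\mathcal M$, and then on $B_R$ by continuity.
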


We now recall two lemmas which will be important in the sequel. For their proofs we refer the reader to  \cite{G}. 

\begin{lemma}[see Lemma 4.1 in \cite{G}]\label{l:hprime}
 Let $u$ be a solution of $\Ba u=0$ in $B_R$. Then
 \[
  H'(r)=\frac{Q-1}{r}H(r)+2D(r),\hspace{1cm}\textrm{ for }r\in (0,R).
 \]
\end{lemma}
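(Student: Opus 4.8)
The plan is to transport, almost verbatim, the argument the paper already used to obtain Lemma \ref{L:H'} in the Carnot setting, replacing the sub-Laplacian by $\Ba$ and the gauge $\rho$ by $\ra$. The engine is the representation formula for $-\Ba$ attached to the explicit fundamental solution \eqref{Ga}, which is the counterpart of \eqref{2psiG}: for $\psi\in \Gamma^2_\alpha(\R^N)$ and $0<r<R$,
\[
\psi(0) = (Q-2)\,\frac{1}{r^{Q-1}} \int_{S_r} \psi\, \frac{\psi_\alpha}{|\nabla \ra|}\, dH_{N-1} - \int_{B_r} \Ba \psi \left[\frac{1}{\ra^{Q-2}} - \frac{1}{r^{Q-2}}\right] dz\,dt.
\]
This is obtained from Green's identity for the divergence-form operator $\Ba u = \operatorname{div}(A\nabla u)$ together with $\Gamma = C_\alpha\,\ra^{-(Q-2)}$ and the identity $|\na\Gamma|^2/|\nabla\Gamma| = C_\alpha(Q-2)\,\ra^{-(Q-1)}\,\psi_\alpha/|\nabla\ra|$, and is precisely the computation performed in \cite{G}.

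The first step is to specialize $\psi = u^2$. Because $\Ba$ is in divergence form, $\Ba(u^2) = 2u\,\Ba u + 2|\na u|^2 = 2|\na u|^2$ whenever $\Ba u = 0$. Recognizing the surface integral as $H(r)/r^{Q-1}$, the representation formula becomes
\[
u(0)^2 = (Q-2)\,\frac{H(r)}{r^{Q-1}} - 2 \int_{B_r} |\na u|^2 \left[\frac{1}{\ra^{Q-2}} - \frac{1}{r^{Q-2}}\right] dz\,dt.
\]

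The second step is to differentiate this in $r$. The left-hand side is constant, so its derivative vanishes. In the volume integral the bracket $\ra^{-(Q-2)} - r^{-(Q-2)}$ vanishes identically on $S_r$, so the boundary term coming from the moving domain drops out and only the explicit $r$-dependence of the integrand contributes, giving $\frac{d}{dr}\int_{B_r}|\na u|^2[\cdots]\,dz\,dt = \frac{Q-2}{r^{Q-1}} D(r)$. Hence
\[
\frac{d}{dr} \frac{H(r)}{r^{Q-1}} = \frac{2}{r^{Q-1}} D(r),
\]
and carrying out the product differentiation on the left, $\frac{H'(r)}{r^{Q-1}} - (Q-1)\frac{H(r)}{r^{Q}} = \frac{2D(r)}{r^{Q-1}}$, which upon multiplication by $r^{Q-1}$ yields exactly $H'(r) = \frac{Q-1}{r}H(r) + 2D(r)$.

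I expect the only genuine difficulty to be analytic rather than algebraic: every manipulation must be justified in the presence of the degeneracy of $\Ba$ along $\mathcal M = \{0\}\times\R^k$ and the singularity of $\Gamma$ at the origin. Specifically, one needs the local integrability of $|\na u|^2\,\ra^{-(Q-2)}$ near $0$ (which rests on the homogeneous dimension $Q = m+(\alpha+1)k$ in \eqref{Qa} and on the $\Gamma^2_\alpha$ regularity of solutions), the legitimacy of differentiating under the integral sign, and control of the weight $\psi_\alpha/|\nabla\ra|$ on $S_r$ where the Euclidean gradient $|\nabla\ra|$ may degenerate. Since all of these points are established in \cite{G}, the lemma follows, and in keeping with the surrounding exposition I would simply invoke that reference for the representation formula and the attendant regularity.
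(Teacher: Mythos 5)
Your proof is correct, and in a sense there is nothing in the paper to compare it against: for this lemma the paper gives no argument of its own, deferring entirely to Lemma 4.1 of \cite{G}. What you have done is transplant the paper's proof of the Carnot-group analogue (Lemma \ref{L:H'}, which rests on the identity \eqref{2usqG} extracted from the representation formula \eqref{psi} of \cite{CGL} in the proof of Proposition \ref{P:beauty}) to the Baouendi setting, and this is legitimate precisely because \cite{G} establishes the corresponding mean value machinery for $\Ba$ with the explicit fundamental solution \eqref{Ga}. Your algebra is sound: $\Ba(u^2)=2u\,\Ba u+2|\na u|^2$ holds since $\Ba$ is in divergence form with $\langle A\nabla u,\nabla u\rangle=|\na u|^2$, the bracket $\ra^{2-Q}-r^{2-Q}$ vanishes on $S_r$ so the moving boundary contributes nothing, and the product-rule expansion of $\frac{d}{dr}\bigl(H(r)/r^{Q-1}\bigr)$ yields exactly the stated identity. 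The one inaccuracy is normalization: since $\Gamma=C_\alpha\,\ra^{2-Q}$ rather than $\ra^{2-Q}$, both terms on the right-hand side of your representation formula should carry the factor $C_\alpha$ (as written, testing with $\psi\equiv 1$ would force $\int_{S_r}\psi_\alpha/|\nabla\ra|\,dH_{N-1}=r^{Q-1}/(Q-2)$, which is not the correct normalization). This slip is harmless for the lemma, because after differentiating in $r$ the constant left-hand side $\psi(0)$ drops out and the common factor $C_\alpha$ cancels, but it should be fixed if the intermediate formula is to be quoted on its own.
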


By using the coarea formula, we have the following for the derivative of $D$:
\begin{equation}
 D'(r)=\int_{S_r} |\nabla_\alpha u|^2\frac{dH_{N-1}}{|\nabla\ra|}.
\end{equation}

The next result, which is Corollary 2.3 in \cite{G}, constitutes a remarkable property of solutions of $\Ba u = 0$.

\begin{lemma}[First variation of the energy]\label{l:dprime}
 Let $u$ be a solution of $\Ba u=0$ in $B_R$. Then,
 \begin{equation}
  D'(r)=2\int_{S_r}\left(\frac{Z_\alpha u}{r}\right)^2\frac{\psi_\alpha}{|\nabla \ra|}dH_{N-1}+\frac{Q-2}{r}D(r).
 \end{equation}

\end{lemma}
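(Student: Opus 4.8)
The plan is to transcribe the proof of Proposition \ref{P:D'} into the Baouendi setting, with the sub-Laplacian replaced by $\Ba=\sum_{j=1}^N X_j^2$ for the vector fields $X_j$ of \eqref{XBa}, and the dilation generator $Z$ replaced by $\Za$ of \eqref{Za}. First, the coarea formula recorded just before the lemma gives, for a.e. $r$,
\[
D'(r)=\int_{S_r}|\na u|^2\,\frac{dH_{N-1}}{|\nabla\ra|},
\]
so it suffices to evaluate this surface integral. To do so I would invoke the Rellich--Pohozaev identity exactly as in Proposition \ref{P:Rellich}: this is licit because each $X_j$ is formally skew-adjoint with respect to Lebesgue measure ($X_j^\ast=-X_j$, since $\tfrac{|z|^\alpha}{2}$ does not depend on $t$), so that $\Ba=-\sum_j X_j^\ast X_j$ has precisely the divergence structure the identity requires. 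Taking $\zeta=\Za$ and $\Om=B_r$, the identity reads
\[
2\int_{S_r}\Za u\,\langle\na u,\bN_\alpha\rangle\,dH_{N-1}+\int_{B_r}(\operatorname{div}\Za)\,|\na u|^2-2\sum_{j=1}^N\int_{B_r}X_j u\,[X_j,\Za]u-2\int_{B_r}\Za u\,\Ba u=\int_{S_r}|\na u|^2\langle\Za,\nu\rangle\,dH_{N-1},
\]
where $\bN_\alpha=\tfrac{1}{|\nabla\ra|}\na\ra$ and $\nu$ is the Euclidean outer normal.

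Next I would compute the three structural ingredients, all mirroring Lemma \ref{L:Zprop}. A direct calculation gives $\operatorname{div}\Za=m+(\alpha+1)k=Q$, so the divergence term is $Q\,D(r)$. Differentiating, one checks $[X_j,\Za]=X_j$ for every $j=1,\dots,N$: for $j\le m$ this is immediate, while for $j>m$, writing $X_j=\tfrac{|z|^\alpha}{2}\p_{t_{j-m}}$, the derivative $\Za(\tfrac{|z|^\alpha}{2})=\alpha\tfrac{|z|^\alpha}{2}$ together with the factor $(\alpha+1)$ picked up when $\p_{t_{j-m}}$ is commuted through the $t$-part of $\Za$ combine to leave precisely $X_j u$; hence the commutator term equals $-2\sum_j\int_{B_r}(X_j u)^2=-2D(r)$. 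Since $\ra$ is $\delta_\lambda$-homogeneous of degree one, \eqref{Zak} yields $\Za\ra=\ra=r$ on $S_r$, so $\langle\Za,\nu\rangle=\ra/|\nabla\ra|=r/|\nabla\ra|$ there, and the right-hand side becomes $r\,D'(r)$. Finally the boundary term on the left is handled by Lemma \ref{l:gradalpharho}: from $\langle\na u,\na\ra\rangle=\tfrac{\Za u}{\ra}\psi_\alpha$ and $\ra=r$ on $S_r$ we get $\langle\na u,\bN_\alpha\rangle=\tfrac{\Za u}{r}\tfrac{\psi_\alpha}{|\nabla\ra|}$, so that term equals $\tfrac{2}{r}\int_{S_r}(\Za u)^2\tfrac{\psi_\alpha}{|\nabla\ra|}\,dH_{N-1}$.

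Using $\Ba u=0$ to kill the last interior integral and collecting the pieces, the identity becomes
\[
r\,D'(r)=\frac{2}{r}\int_{S_r}(\Za u)^2\,\frac{\psi_\alpha}{|\nabla\ra|}\,dH_{N-1}+(Q-2)D(r),
\]
and dividing by $r$ produces the claimed formula. I would stress that, unlike the Carnot case of Proposition \ref{P:D'}, no discrepancy term survives: the identity \eqref{nara2} functions as an automatically vanishing discrepancy, which is exactly why the Baouendi monotonicity later requires no analogue of hypothesis \eqref{aa}.

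The main obstacle I anticipate is justifying the Rellich identity in the presence of the degeneracy of $\Ba$ along $\mathcal M=\{0\}\times\R^k$: for general $\alpha>0$ the coefficient $|z|^\alpha$ has only limited regularity near $\{z=0\}$, and $\Gamma^2_\alpha$ solutions may be singular there, so the integrations by parts underlying Proposition \ref{P:Rellich} are not immediately valid on all of $B_r$. I would handle this by an exhaustion argument: establish the identity first on $B_r\setminus\{|z|<\ve\}$, where the coefficients are smooth and $u$ is classical, and then let $\ve\to 0^+$, showing that the extra boundary integrals over $\{|z|=\ve\}\cap B_r$ vanish in the limit. Controlling these tube integrals relies on the a priori bounds for $u$ and $\na u$ near $\mathcal M$ available for $\Gamma^2_\alpha$ solutions; this is the delicate technical point, and it is precisely the estimate carried out in \cite{G}, to which I would refer for the rigorous justification.
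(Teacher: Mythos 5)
Your proof is correct and is essentially the argument behind the result: the paper itself gives no proof of Lemma \ref{l:dprime}, deferring to Corollary 2.3 of \cite{G}, and the derivation there is precisely the Rellich--Pohozaev identity with the dilation field $\Za$ that you transcribe from Propositions \ref{P:Rellich} and \ref{P:D'}. Your structural computations are all right --- the $X_j$ are divergence-free so $X_j^*=-X_j$, $\operatorname{div}\Za = m+(\alpha+1)k = Q$, $[X_j,\Za]=X_j$ for all $j$ (including $j>m$, where the $\alpha$ from $\Za(|z|^\alpha/2)$ and the $\alpha+1$ from the $t$-part combine correctly), $\Za\ra=\ra=r$ on $S_r$, and \eqref{nara2} plays the role of an identically vanishing discrepancy, which is exactly why no analogue of \eqref{aa} is needed --- and your deferral to \cite{G} of the technical justification of the integration by parts near the degenerate set $\mathcal M$ is consistent with what the paper itself does.
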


The following result represents a generalization of the Almgren's monotonicity formula to solutions of the operator $\Ba$.

\begin{thrm}[Almgren type monotonicity formula, see \cite{G}]\label{T:almgrenBa}
Let $u$ be a solution of $\Ba u = 0$ in $B_R$, and suppose that for no $r\in (0,R)$ we have $u\equiv 0$ in $B_r$. Then, the function $r\to N(u,r)$ is nondecreasing on $(0,R)$.
\end{thrm}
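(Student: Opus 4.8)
The plan is to imitate the computation of the logarithmic derivative of the frequency carried out in Theorem \ref{T:mono}, but to exploit the fact that for the Baouendi operator no vanishing-discrepancy hypothesis is needed. Indeed, the exact identity \eqref{nara2} in Lemma \ref{l:gradalpharho} plays here the role that the condition \eqref{aa} played in the Carnot setting; it is already incorporated into the first variation formula of Lemma \ref{l:dprime}, which is why that formula carries no discrepancy correction term. Thus the argument is structurally the ``clean'' version of the one in Section \ref{S:energy}.

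First I would record that, by Lemma \ref{l:hnz} together with the standing assumption that $u\not\equiv 0$ in any $B_r$, we have $H(r)\ne 0$ on $(0,R)$, so that $N(u,r)$ is well defined; moreover $D$ is absolutely continuous (via the coarea formula) and $H$ is $C^1$ by Lemma \ref{l:hprime}, whence $N(u,\cdot)$ is absolutely continuous and it suffices to prove $\frac{d}{dr}\log N(r)\ge 0$ for a.e.\ $r$ with $N(r)>0$ (at points where $N(r)=D(r)=0$ the frequency attains its minimal value, so monotonicity is preserved trivially). Writing $w:=\frac{\psi_\alpha}{|\nabla\ra|}\ge 0$, I would then compute
\[
\frac{d}{dr}\log N(r) = \frac 1r + \frac{D'(r)}{D(r)} - \frac{H'(r)}{H(r)},
\]
and substitute the two first variation formulas. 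Lemma \ref{l:dprime} gives
\[
\frac{D'(r)}{D(r)} = \frac{Q-2}{r} + \frac{2\int_{S_r}\left(\frac{\Za u}{r}\right)^2 w\, dH_{N-1}}{D(r)},
\]
while Lemma \ref{l:hprime} gives $\frac{H'(r)}{H(r)} = \frac{Q-1}{r} + \frac{2D(r)}{H(r)}$. The purely algebraic terms cancel, $\frac1r + \frac{Q-2}{r} - \frac{Q-1}{r} = 0$, leaving
\[
\frac{d}{dr}\log N(r) = 2\left[\frac{\int_{S_r}\left(\frac{\Za u}{r}\right)^2 w\, dH_{N-1}}{D(r)} - \frac{D(r)}{H(r)}\right].
\]

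To finish, I would insert the representation $D(r)=\int_{S_r} u\left(\frac{\Za u}{r}\right) w\, dH_{N-1}$ from Proposition \ref{P:diri}, together with $H(r)=\int_{S_r} u^2 w\, dH_{N-1}$, so that the bracket becomes
\[
\frac{\int_{S_r}\left(\frac{\Za u}{r}\right)^2 w\, dH_{N-1}}{\int_{S_r} u\left(\frac{\Za u}{r}\right) w\, dH_{N-1}} - \frac{\int_{S_r} u\left(\frac{\Za u}{r}\right) w\, dH_{N-1}}{\int_{S_r} u^2\, w\, dH_{N-1}},
\]
and nonnegativity follows at once from the Cauchy--Schwarz inequality $\left(\int_{S_r} u\left(\frac{\Za u}{r}\right) w\, dH_{N-1}\right)^2\le \left(\int_{S_r}\left(\frac{\Za u}{r}\right)^2 w\, dH_{N-1}\right)\left(\int_{S_r} u^2\, w\, dH_{N-1}\right)$ with respect to the nonnegative surface measure $w\, dH_{N-1}$.

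There is essentially no serious obstacle: the single structural fact that makes the argument run without a discrepancy correction is the exact relation \eqref{nara2}, which guarantees that the analogue of $E_u$ vanishes identically for the gauge $\ra$. The only minor points requiring care are the justification that $N(u,\cdot)$ is absolutely continuous, so that the pointwise computation of $\frac{d}{dr}\log N$ is legitimate a.e., and the handling of the possible zeros of $D(r)$; both are routine given Lemma \ref{l:hnz} and the coarea formula.
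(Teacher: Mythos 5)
Your proof is correct and follows essentially the same route as the paper: the authors likewise combine Lemma \ref{l:hprime}, Lemma \ref{l:dprime} and Proposition \ref{P:diri} to write $\frac{d}{dr}\log N(r)$ as the difference of the two ratios in your final display, and conclude by the Cauchy--Schwarz inequality with respect to the measure $\frac{\psi_\alpha}{|\nabla \ra|}\,dH_{N-1}$. Your added remarks on well-definedness of $N$ (via Lemma \ref{l:hnz}), absolute continuity, and the structural role of \eqref{nara2} in eliminating any discrepancy term are sound elaborations of details the paper leaves implicit.
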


\begin{proof}
Using Lemmas \ref{l:hprime}, \ref{l:dprime} and Proposition , we find
\begin{equation}\label{logN}
\frac{d}{dr} \log N(r) = 2 \frac{\int_{S_r}\left(\frac{Z_\alpha u}{r}\right)^2\frac{\psi_\alpha}{|\nabla \ra|}dH_{N-1}}{\int_{S_r}u\left(\frac{Z_\alpha u}{r}\right)\frac{\psi_\alpha}{|\nabla \ra|}dH_{N-1}} - 2 \frac{\int_{S_r}u\left(\frac{Z_\alpha u}{r}\right)\frac{\psi_\alpha}{|\nabla \ra|}dH_{N-1}}{\int_{S_r} u^2\frac{\psi_\alpha}{|\nabla \ra|}dH_{N-1}}\ge 0,
\end{equation}
where we have used Cauchy-Schwarz inequality.

\end{proof}

Proposition \ref{P:diri} has the following consequence.

\begin{cor}\label{C:homo}
 Let $u$ be a solution of $\Ba u=0$ in $B_R$. Then, $N(u,r) \equiv \kappa$, $0<r<R$, if and only if $u$ is homogeneous of degree $\kappa$ in $B_R$.
 \end{cor}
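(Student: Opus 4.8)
The plan is to prove the two implications separately, following closely the argument used for Carnot groups in Proposition \ref{P:hom} and Proposition \ref{P:monhom}, with the pleasant simplification that for the operators $\Ba$ no vanishing discrepancy hypothesis is needed. Throughout I would tacitly assume, as in Lemma \ref{l:hnz}, that $u\not\equiv 0$ in $B_r$ for every $r$, so that $H(r)\ne 0$ and $N(u,r)$ is well defined on $(0,R)$.

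For the easy direction, suppose $u$ is homogeneous of degree $\kappa$. Then by \eqref{Zak} we have $\Za u = \kappa u$, and substituting this into Proposition \ref{P:diri} gives
\[
D(r) = \int_{S_r} u\left(\frac{\Za u}{r}\right)\frac{\psi_\alpha}{|\nabla \ra|}\,dH_{N-1} = \frac{\kappa}{r}\int_{S_r} u^2\frac{\psi_\alpha}{|\nabla \ra|}\,dH_{N-1} = \frac{\kappa}{r}H(r).
\]
Hence $N(u,r) = rD(r)/H(r) \equiv \kappa$, which is the desired conclusion.

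For the converse, I would exploit the log-derivative identity \eqref{logN} established in the proof of Theorem \ref{T:almgrenBa}. If $N(u,\cdot)\equiv \kappa$, then $\frac{d}{dr}\log N(r)\equiv 0$, so the Cauchy-Schwarz inequality used in \eqref{logN} must hold with equality for every $r$. Interpreting the two numerators as inner products in $L^2\!\left(S_r,\frac{\psi_\alpha}{|\nabla\ra|}dH_{N-1}\right)$, equality forces $\frac{\Za u}{r}$ and $u$ to be proportional on $S_r$, i.e. there is a function $\la(r)$ with $\frac{\Za u}{r} = \la(r)\,u$ on $S_r$. Feeding this back into Proposition \ref{P:diri} yields $D(r) = \la(r)H(r)$, so $\kappa \equiv N(r) = rD(r)/H(r) = r\la(r)$, which forces $\la(r) = \kappa/r$. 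Therefore $\Za u = \kappa u$ throughout $B_R\setminus\{0\}$, and by \eqref{Zak} the function $u$ is homogeneous of degree $\kappa$.

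The only delicate point — the main obstacle — is the equality-in-Cauchy-Schwarz step in the converse: one must verify that the two surface integrals appearing in \eqref{logN} are genuinely the mixed and pure terms of a Cauchy-Schwarz pairing against the positive weight $\psi_\alpha/|\nabla\ra|$, and that equality therefore yields pointwise proportionality $\Za u = r\la(r)u$ on each sphere $S_r$ (rather than merely in an averaged sense). Since $\psi_\alpha = |z|^{2\alpha}/\ra^{2\alpha}>0$ away from $\mathcal M$ by \eqref{nara}, the weight is strictly positive on $S_r$ off the singular manifold, so the standard equality case applies; the resulting $\la(r)$ is then pinned down to $\kappa/r$ exactly as above. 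Everything else is a direct consequence of Proposition \ref{P:diri}, Lemma \ref{l:hprime}, and \eqref{Zak}.
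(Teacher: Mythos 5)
Your proof is correct and is essentially the paper's own argument: the forward direction via \eqref{Zak} and Proposition \ref{P:diri} is identical, and for the converse the paper simply says to argue from \eqref{logN} exactly as in the proof of Proposition \ref{P:monhom}, which is precisely the equality-in-Cauchy-Schwarz argument you spell out (equality forces $\frac{\Za u}{r} = \la(r)u$ on $S_r$, then Proposition \ref{P:diri} pins down $\la(r) = \kappa/r$). Your added remark that the weight $\psi_\alpha/|\nabla \ra|$ is strictly positive off $\mathcal M$ merely fills in a detail the paper leaves implicit.
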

 
 \begin{proof}
Suppose that $u$ is homogeneous of degree $\kappa$ in $B_R$. By the hypothesis and \eqref{Zak} we have $\Za u = \kappa u$. Proposition \ref{P:diri} thus gives $D(r) = \frac{\kappa}{r} H(r)$, and therefore $N(r) \equiv \kappa$ in $(0,R)$.
The opposite implication can be proved from \eqref{logN} by arguing exactly as in the proof of Proposition \ref{P:monhom} above, and we omit the relevant details.
\end{proof}

We have seen in Corollary \ref{C:homo} that the frequency of a homogeneous solution of $\Ba u = 0$ is constant. It is natural to ask whether the opposite result hold, i.e., whether it is true that if a solution of $\Ba u = 0$ has constant frequency, then $u$ must be a homogeneous harmonic function. In order to answer this question, we now   establish, for solutions of the operator $\Ba$, a monotonicity  result similar to Theorem \ref{T:weissG}.

\begin{thrm}\label{T:weissB}
 Let $u$ be a solution of $\Ba u=0$ in $B_R$. For every $\kappa\ge 0$ define for $0<r<R$
 \begin{equation}\label{WBa}
  \mathcal{W}_\kappa(u,r)=\frac{1}{r^{Q-2+2\kappa}}D(u,r)-\frac{\kappa}{r^{Q-1+2\kappa}}H(u,r).
 \end{equation}
 Then,
 \begin{equation}\label{W'Ba}
  \frac{d}{dr}\mathcal{W}_\kappa(u,r)=\frac{2}{r^{Q+2\kappa}}\int_{S_r}(Z_\alpha u-\kappa u)^2\frac{\psi_\alpha}{|\nabla \ra|}dH_{N-1}.
 \end{equation}
 Consequently, $r\mapsto \mathcal{W}_\kappa(u,r)$ is a non-decreasing function in $(0,R)$, and $\mathcal{W}_\kappa(u,\cdot)$ is constant if and only if $u$ is homogeneous of degree $\kappa$.
\end{thrm}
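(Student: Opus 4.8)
The plan is to mirror the proof of Theorem~\ref{T:weissG}, exploiting the key structural simplification that for the Baouendi operators the first variation of the energy, Lemma~\ref{l:dprime}, carries \emph{no} discrepancy term. This is exactly why the additional hypothesis \eqref{aa}, indispensable in the Carnot group setting, can be dispensed with here. First I would write $\mathcal W_\kappa(u,r) = r^{-(Q-2+2\kappa)} D(r) - \kappa\, r^{-(Q-1+2\kappa)} H(r)$ and differentiate in $r$, which produces one term from $D'(r)$, one from $H'(r)$, and two from differentiating the powers of $r$.

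Next I would substitute $H'(r)$ from Lemma~\ref{l:hprime} and $D'(r)$ from Lemma~\ref{l:dprime}. Collecting terms, the contributions proportional to $r^{-(Q-1+2\kappa)} D(r)$ combine with total coefficient $(Q-2)-(Q-2+2\kappa)-2\kappa = -4\kappa$, and those proportional to $r^{-(Q+2\kappa)} H(r)$ combine with coefficient $-\kappa(Q-1)+\kappa(Q-1+2\kappa)=2\kappa^2$, leaving
\[
\frac{d}{dr}\mathcal W_\kappa(u,r) = \frac{2}{r^{Q-2+2\kappa}} \int_{S_r} \left(\frac{\Za u}{r}\right)^2 \frac{\psi_\alpha}{|\nabla \ra|}\, dH_{N-1} - \frac{4\kappa}{r^{Q-1+2\kappa}} D(r) + \frac{2\kappa^2}{r^{Q+2\kappa}} H(r).
\]

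The third step is to recognize the right-hand side as the completed square in \eqref{W'Ba}. Expanding $(\Za u-\kappa u)^2 = (\Za u)^2 - 2\kappa u\,\Za u + \kappa^2 u^2$ and dividing by $r^{Q+2\kappa}$, the $(\Za u)^2$ piece matches the first term since $(\Za u)^2 = r^2(\Za u/r)^2$, the $\kappa^2 u^2$ piece matches the last term by the definition of $H(r)$, and the cross term reproduces the middle term precisely because Proposition~\ref{P:diri} identifies $r^{-1}\int_{S_r} u\,\Za u\,\psi_\alpha/|\nabla\ra|\, dH_{N-1}$ with $D(r)$. This gives \eqref{W'Ba}, and since the integrand is nonnegative (recall $\psi_\alpha = |\na\ra|^2 \ge 0$), monotonicity of $r\mapsto\mathcal W_\kappa(u,r)$ follows at once.

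Finally, for the characterization of constancy: if $u$ is homogeneous of degree $\kappa$ then $\Za u = \kappa u$ by \eqref{Zak}, so the right-hand side of \eqref{W'Ba} vanishes and $\mathcal W_\kappa$ is constant. Conversely, constancy forces $\int_{S_r}(\Za u-\kappa u)^2\psi_\alpha/|\nabla\ra|\, dH_{N-1}=0$ for a.e.\ $r$, whence $\Za u = \kappa u$ on almost every sphere $S_r$, and the coarea formula upgrades this to $\Za u = \kappa u$ throughout $B_R$; by \eqref{Zak} this is homogeneity of degree $\kappa$. I expect the only genuinely delicate point to be this last passage, where one must account for the degeneracy set $\mathcal M = \{0\}\times\R^k$ on which $\psi_\alpha$ vanishes: there the vanishing of the weighted surface integral does not immediately control $\Za u - \kappa u$, and one resolves this using the continuity of $\Za u-\kappa u$ together with the fact that $\mathcal M$ is negligible. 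The algebraic core of the identity, by contrast, is entirely routine.
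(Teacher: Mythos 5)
Your proposal is correct and follows exactly the route the paper intends: it omits the proof of Theorem \ref{T:weissB} precisely because it is ``nearly identical to that of Theorem \ref{T:weissG}'' and based on Lemmas \ref{l:hprime} and \ref{l:dprime}, which is what you carry out, with the coefficient bookkeeping ($-4\kappa$ and $2\kappa^2$) and the completion of the square via Proposition \ref{P:diri} all checking out. Your closing remark about the degeneracy set $\mathcal M$ is a sensible extra precaution rather than a deviation from the paper's argument.
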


The proof of Theorem \ref{T:weissB}, based on Lemmas \ref{l:hprime} and \ref{l:dprime}, is nearly identical to that of Theorem \ref{T:weissG} and therefore we omit it. 

\begin{rmrk}\label{R:sh}
We note that, similarly to the case of groups (see Remark \ref{R:hom} above), the sufficiency part of Corollary \ref{C:homo} also follows from Theorem \ref{T:weissB}.
Similarly to \eqref{Wk}, we  write \eqref{WBa} as follows
\begin{equation}\label{WkBa}
\mathcal W_\kappa(u,r) = \frac{H(u,r)}{r^{Q-1+2\kappa}} \big(N(u,r) - \kappa\big).
\end{equation}
By the hypothesis that $N(u,r) = \kappa$ for $0<r<R$, we see that $\mathcal W_\kappa(u,\cdot) =0$ on $(0,R)$, and therefore $d/dr \mathcal W_\kappa(u,\cdot) = 0$. In view of \eqref{W'Ba} this gives $\Za u = \kappa u$ in $B_R$.
\end{rmrk}

\begin{dfn}\label{D:shBa}
Let $\kappa\ge 0$. We denote by $\mathfrak P_{\alpha,\kappa}(\R^N)$ the space of all functions $P_\kappa\in \Gamma^2_\alpha(\R^N)$ such that $\Ba P_\kappa = 0$ and $\Za P_\kappa = \kappa P_\kappa$. The elements of such space will be called $\Ba$-\emph{solid harmonics} of degree $\kappa$. 
\end{dfn} 

We emphasize that, for $P_\kappa\in \mathfrak P_{\alpha,\kappa}(\R^N)$, the number $\kappa$ needs not be an integer. For instance, if $A = \frac{(\alpha+1)(2\alpha+m)}{k}$, then the function $P_\kappa(z,t) =  |z|^{2(\alpha + 1)} - A |t|^2,$ is a solution of $\Ba f = 0$, homogeneous of degree $\kappa = 2(\alpha+1)$. Thus, $P_\kappa \in \mathfrak P_{\alpha,\kappa}(\R^N)$.

\begin{prop}\label{P:cm}
For every $\kappa\ge 0$ the space $\mathfrak P_{\alpha,\kappa}(\R^N)$ is finite dimensional.
\end{prop}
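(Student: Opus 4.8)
The plan is to realize $\mathfrak P_{\alpha,\kappa}(\R^N)$ as a pre-Hilbert space and to exclude the existence of an infinite orthonormal sequence by a compactness argument. First I would fix the ball $B_2$ and equip $\mathfrak P_{\alpha,\kappa}(\R^N)$ with the inner product
\[
\langle u,v\rangle = \int_{B_2} u\,v\,dz\,dt .
\]
Since every $P\in \mathfrak P_{\alpha,\kappa}(\R^N)$ lies in $\Gamma^2_\alpha(\R^N)\subset C(\R^N)$, this integral is finite, and it is positive definite: if $\int_{B_2}P^2\,dz\,dt=0$ then $P\equiv 0$ on $B_2$, and the homogeneity relation $P(\delta_\lambda(z,t))=\lambda^\kappa P(z,t)$ forces $P\equiv 0$ on all of $\R^N$. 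Thus $\mathfrak P_{\alpha,\kappa}(\R^N)$ is an inner product space, and it suffices to show that it contains no infinite orthonormal sequence $\{P_j\}$.

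The next step is a uniform energy bound. Suppose such a sequence exists, normalized so that $\int_{B_2}P_j^2\,dz\,dt=1$. Because each $P_j$ is homogeneous of degree $\kappa$ and solves $\Ba P_j=0$, Corollary \ref{C:homo} gives $N(P_j,r)\equiv\kappa$, whence $D(P_j,r)=\tfrac{\kappa}{r}H(P_j,r)$; moreover the homogeneity of $u$, together with the correct scaling under $\delta_\lambda$ of the measure $\psi_\alpha|\nabla\ra|^{-1}dH_{N-1}$ and the fact that $\psi_\alpha$ is homogeneous of degree zero, yield $H(P_j,r)=r^{Q-1+2\kappa}H(P_j,1)$ and $D(P_j,r)=\kappa\,r^{Q-2+2\kappa}H(P_j,1)$. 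Using the coarea formula together with $\psi_\alpha\le 1$ I would then estimate
\[
\frac{2^{Q+2\kappa}}{Q+2\kappa}\,H(P_j,1)=\int_{B_2}P_j^2\,\psi_\alpha\,dz\,dt\le \int_{B_2}P_j^2\,dz\,dt=1,
\]
so that $H(P_j,1)$, and therefore $D(P_j,2)=\int_{B_2}|\na P_j|^2\,dz\,dt$, is bounded independently of $j$. Since $\sum_{j}(X_j u)^2=|\na u|^2$ for the fields in \eqref{XBa}, the sequence $\{P_j\}$ is thus bounded in the degenerate Sobolev space $W^{1,2}_{X}(B_2)$, i.e. $\int_{B_2}\bigl(P_j^2+|\na P_j|^2\bigr)\,dz\,dt\le C$.

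The final step, which is also the crux, is compactness. For the vector fields \eqref{XBa} the Lebesgue measure is doubling with respect to the associated control distance and a Poincar\'e inequality holds; by the resulting Rellich--Kondrachov theorem for these Grushin type Sobolev spaces, see \cite{FL} and \cite{FS}, a bounded sequence in $W^{1,2}_X(B_2)$ is precompact in $L^2(B_2,dz\,dt)$. Hence a subsequence $\{P_{j_k}\}$ converges in $L^2(B_2)$. This is impossible for an orthonormal sequence, since $\|P_{j_k}-P_{j_\ell}\|_{L^2(B_2)}^2=2$ for $k\neq\ell$, so no subsequence can be Cauchy. The resulting contradiction shows that $\mathfrak P_{\alpha,\kappa}(\R^N)$ is finite dimensional. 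I expect the main obstacle to be exactly this compact embedding: one must invoke the Franchi--Lanconelli / Franchi--Serapioni theory to secure the Rellich property for the degenerate operator $\Ba$ (in the hypoelliptic case $\alpha=2\ell$ it also follows directly from the subelliptic estimates), while the scaling identities and the reduction to orthonormal sequences are elementary.
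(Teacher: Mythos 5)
Your proof is correct, but it takes a genuinely different route from the paper. The paper's own argument is much shorter and purely global: from $\Za u=\kappa u$ it deduces the growth bound $\sup_{r\ge 1} r^{-\kappa}\sup_{B_r}|u|<\infty$, and then invokes the Colding--Minicozzi type theorem of Kogoj and Lanconelli \cite{KL} for $X$-elliptic operators, which asserts that the space of solutions with polynomial growth of a fixed degree is finite dimensional. You instead stay inside the paper's own frequency toolbox: the identities $N(P,r)\equiv\kappa$, $D=\tfrac{\kappa}{r}H$, the ODE of Lemma \ref{l:hprime} (which gives $H(P,r)=r^{Q-1+2\kappa}H(P,1)$ without even needing the scaling of the surface measure), the coarea formula, and $\psi_\alpha\le 1$ (since $\ra\ge |z|$) to convert an $L^2(B_2)$ normalization into a uniform bound on $\int_{B_2}|\na P_j|^2$; you then rule out an infinite orthonormal sequence by compactness of the embedding $W^{1,2}_X(B_2)\hookrightarrow L^2(B_2)$. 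All of these steps check out, including the positive definiteness of your inner product via homogeneity. The trade-off is in the external ingredient each argument leans on: the paper outsources everything to the Liouville-type theorem of \cite{KL}, while you outsource only the Rellich--Kondrachov property for the Grushin-type Sobolev space. That compactness is true and standard --- it follows from the doubling property and Poincar\'e inequality for the vector fields \eqref{XBa}, which belong to the Franchi--Lanconelli/Franchi--Serapioni circle of ideas --- but be aware that it is not literally stated as a compact-embedding theorem in \cite{FL} or \cite{FS}; if you want a fully airtight citation you should either point to a reference where the Rellich property for Grushin vector fields is stated explicitly, or derive it from doubling plus Poincar\'e by the standard covering argument. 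A side benefit of your approach is that it is quantitative in spirit (it bounds the dimension by compactness of the $L^2$-normalized unit sphere), and it meshes naturally with the orthogonality of solid harmonics of different degrees proved in Proposition \ref{P:ortho}.
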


\begin{proof}
Let $u\in  \mathfrak P_{\alpha,\kappa}(\R^N)$. Since $\Za u = \kappa u$ in $\R^N$,
for any $(z,t)\in \R^N$ such that $\ra(z,t)\ge 1$ we must have
\[
u(z,t) = \ra(z,t)^\kappa u(\delta_{\ra(z,t)^{-1}}(z,t)),
\]
and therefore
\[
|u(z,t)| \le \left(\underset{S_1}{\max}\ |u|\right)  \ra(z,t)^\kappa.
\]
This estimate gives 
\begin{equation}\label{growth}
\underset{r\ge 1}{\sup}\ \left(\frac{1}{r^\kappa} \underset{B_r}{\sup} |u|\right) <\infty.
\end{equation}
With \eqref{growth} in hands, we can now invoke the Colding-Minicozzi type theorem at the end of the paper \cite{KL} by Kogoj and Lanconelli to conclude that $\mathfrak P_{\alpha,\kappa}(\R^N)$ is finite dimensional.

\end{proof}  

It is quite notable that solid harmonics of different degrees enjoy the following orthogonality property. It is well-known that a similar orthogonality property fails  for the solid harmonics in the Heisenberg group $\Hn$.

\begin{prop}\label{P:ortho}
For every $\kappa\not= \mu$, let $P_\kappa\in \mathfrak P_{\alpha,\kappa}(\R^N)$ and $P_\mu \in \mathfrak P_{\alpha,\mu}(\R^N)$. Then, for every $r>0$ one has
\[
\int_{S_r} P_\kappa P_\mu \frac{\psi_\alpha}{|\nabla \ra|}dH_{N-1} = 0.
\]
\end{prop}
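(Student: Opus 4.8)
The plan is to run a Green's (second) identity for the divergence-form operator $\Ba=\operatorname{div}(A\nabla\cdot)$ on the ball $B_r$, applied to the pair $P_\kappa,P_\mu$, and then to convert the resulting conormal boundary fluxes into the surface integral in the statement by exploiting the homogeneity of the two solid harmonics. Writing $\partial^A_\nu w=\langle A\nabla w,\nu\rangle$ for the conormal derivative taken along the Euclidean outer unit normal $\nu$ to $S_r$, the divergence theorem yields
\[
\int_{B_r}\left(P_\kappa \Ba P_\mu - P_\mu \Ba P_\kappa\right)dz\,dt = \int_{S_r}\left(P_\kappa\,\partial^A_\nu P_\mu - P_\mu\,\partial^A_\nu P_\kappa\right)dH_{N-1}.
\]
Since both $P_\kappa$ and $P_\mu$ solve $\Ba u=0$ (Definition \ref{D:shBa}), the left-hand side vanishes, and hence the boundary integral is zero.

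Next I would evaluate the two conormal fluxes on $S_r$. Because $S_r=\{\ra=r\}$, the Euclidean outer normal is $\nu=\nabla\ra/|\nabla\ra|$, and the weighting encoded in $A$ is precisely the one defining the $\alpha$-gradient, so $\langle A\nabla w,\nabla\ra\rangle=\langle \na w,\na \ra\rangle$. By \eqref{nara2} in Lemma \ref{l:gradalpharho} this equals $\frac{\Za w}{\ra}\psi_\alpha$. Using $\ra=r$ on $S_r$ together with the homogeneity relations $\Za P_\kappa=\kappa P_\kappa$ and $\Za P_\mu=\mu P_\mu$ furnished by \eqref{Zak} and Definition \ref{D:shBa}, I obtain
\[
\partial^A_\nu P_\mu = \frac{\mu}{r}\,\frac{\psi_\alpha}{|\nabla \ra|}\,P_\mu,\qquad \partial^A_\nu P_\kappa = \frac{\kappa}{r}\,\frac{\psi_\alpha}{|\nabla \ra|}\,P_\kappa\qquad\text{on }S_r.
\]
Substituting these into the vanishing boundary integral collapses it to $\frac{\mu-\kappa}{r}\int_{S_r}P_\kappa P_\mu\,\frac{\psi_\alpha}{|\nabla \ra|}\,dH_{N-1}=0$, and since $\kappa\neq\mu$ and $r>0$, the asserted orthogonality follows at once. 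The algebraic part of the argument is thus immediate from Lemma \ref{l:gradalpharho}.

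The main obstacle I anticipate is the rigorous justification of the divergence theorem, since $\Ba$ degenerates along $\mathcal M=\{0\}\times\R^k$ and the solid harmonics in $\Gamma^2_\alpha(\R^N)$ are only regular in the Folland--Stein sense. I would handle this exactly as in the integration-by-parts arguments of \cite{G} that already underlie Proposition \ref{P:diri} and Lemma \ref{l:hprime}: excise the tubular neighborhood $\{|z|<\varepsilon\}$ of $\mathcal M$, apply the classical divergence theorem on $B_r\setminus\{|z|<\varepsilon\}$, and let $\varepsilon\to0^+$. The additional flux across $\{|z|=\varepsilon\}\cap B_r$ tends to zero because the $t$-block of $A$ carries the vanishing factor $|z|^{2\alpha}$, while the $z$-block is integrated over a surface whose measure shrinks; the $\Gamma^2_\alpha$-regularity and the homogeneous growth of $P_\kappa,P_\mu$ then force the limiting correction to vanish. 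Apart from this degeneracy bookkeeping, the proof is purely a homogeneity computation.
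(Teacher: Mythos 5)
Your proof is correct and is essentially the paper's own argument: Green's second identity for $\Ba$ on $B_r$ (which the paper invokes as formula (2.30) of \cite{G}), followed by converting the conormal fluxes via \eqref{nara2} and the homogeneity relations $\Za P_\kappa=\kappa P_\kappa$, $\Za P_\mu=\mu P_\mu$ to collapse the boundary integral to $\frac{\mu-\kappa}{r}\int_{S_r}P_\kappa P_\mu\frac{\psi_\alpha}{|\nabla\ra|}\,dH_{N-1}=0$. The only difference is that you justify the divergence theorem near the degenerate set $\mathcal M$ by an excision argument, whereas the paper outsources this to the cited identity in \cite{G}.
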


\begin{proof}
By formula (2.30) in \cite{G} we have
\begin{align*}
0\ & = \int_{B_r} \big(P_\kappa\ \Ba P_\mu - P_\mu\ \Ba P_\kappa\big) dz dt 
\\
& = \int_{S_r} \bigg[P_\kappa<\na P_\mu,\na \ra> - P_\mu<\na P_\kappa,\na \ra>\bigg]\frac{dH_{N-1}}{|\nabla \ra|}
\end{align*}
Using the equation \eqref{nara2} in the latter identity we find
\[
<\na P_\kappa,\na \ra> = \frac{\Za P_\kappa}{\ra} \psi_\alpha =  \kappa P_\kappa \frac{\psi_\alpha}{\ra}
\]
and similarly,
\[
<\na P_\mu,\na \ra> = \frac{\Za P_\mu}{\ra} \psi_\alpha = \mu P_\mu \frac{\psi_\alpha}{\ra}.
\]
Combining the last three equations we obtain
\[
\frac{\mu - \kappa}{r}\int_{S_r} P_\kappa P_\mu \frac{\psi_\alpha}{|\nabla \ra|}dH_{N-1} = 0.
\]
Since $\kappa\not= \mu$, the desired conclusion follows.

\end{proof}

\begin{thrm}\label{T:N=k-B}
 Let $u\not\equiv 0$ be a solution of $\Ba u=0$ in $\R^N$. If for some number $\kappa\ge 0$ we have $N(u,r)\equiv \kappa$, then $u\in \mathfrak P_{\alpha,\kappa}(\R^N)$.
\end{thrm}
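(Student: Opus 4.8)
The plan is to mirror the argument for Theorem \ref{T:N=k}, exploiting the Weiss-type identity \eqref{W'Ba} just established in Theorem \ref{T:weissB}. The essential simplification here is that, by Definition \ref{D:shBa}, membership in $\mathfrak P_{\alpha,\kappa}(\R^N)$ requires only that $u$ solve $\Ba u = 0$ and be $\delta_\la$-homogeneous of degree $\kappa$; unlike the Carnot group setting of Theorem \ref{T:N=k}, there is no need to prove that $u$ is a polynomial, so the Cauchy--Schauder and stratified Taylor machinery, as well as the growth and Liouville-type steps, are entirely bypassed.

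First I would invoke Lemma \ref{l:hnz}: since $u\not\equiv 0$ in $\R^N$, we have $H(u,r)\ne 0$ for every $r>0$, so the frequency $N(u,\cdot)$ is well defined on $(0,\infty)$ and I may divide by $H(u,r)$. Using the identity \eqref{WkBa} recorded in Remark \ref{R:sh},
\[
\mathcal W_\kappa(u,r) = \frac{H(u,r)}{r^{Q-1+2\kappa}}\big(N(u,r)-\kappa\big),
\]
the hypothesis $N(u,r)\equiv\kappa$ forces $\mathcal W_\kappa(u,\cdot)\equiv 0$, and therefore $\frac{d}{dr}\mathcal W_\kappa(u,r)\equiv 0$ on $(0,\infty)$.

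Next I would feed this into the monotonicity formula \eqref{W'Ba} of Theorem \ref{T:weissB}, which yields
\[
\int_{S_r}\big(\Za u - \kappa u\big)^2 \frac{\psi_\alpha}{|\nabla \ra|}\,dH_{N-1} = 0,\qquad 0<r<\infty.
\]
Applying the coarea formula exactly as in the proof of Theorem \ref{T:N=k} then gives $\int_{\R^N}(\Za u - \kappa u)^2\psi_\alpha\,dz\,dt = 0$. By \eqref{nara} one has $\psi_\alpha = |z|^{2\alpha}/\ra^{2\alpha}>0$ off the degeneracy manifold $\mathcal M = \{z=0\}$, which is Lebesgue-null; hence $\Za u = \kappa u$ almost everywhere, and then everywhere since $u$ is continuous. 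By the Euler characterization \eqref{Zak}, this says precisely that $u$ is $\delta_\la$-homogeneous of degree $\kappa$.

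Finally, combining the hypothesis $\Ba u = 0$ with the derived identity $\Za u = \kappa u$ and the fact that $u\in\Gamma^2_\alpha(\R^N)$, Definition \ref{D:shBa} gives at once $u\in\mathfrak P_{\alpha,\kappa}(\R^N)$. The only point requiring care is the passage from the vanishing of the surface integrals to the pointwise identity $\Za u = \kappa u$, which is exactly where the positivity of $\psi_\alpha$ away from the thin set $\mathcal M$ is used, together with continuity to discard the exceptional set; beyond this there is no genuine obstacle, and in particular the argument terminates without invoking any polynomial structure.
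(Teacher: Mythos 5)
Your proof is correct and is essentially the paper's own argument: the paper disposes of this theorem by citing Remark \ref{R:sh} (or alternatively Corollary \ref{C:homo}), and Remark \ref{R:sh} is exactly the Weiss-functional computation you carry out, namely that $N(u,\cdot)\equiv\kappa$ forces $\mathcal W_\kappa(u,\cdot)\equiv 0$ via \eqref{WkBa}, whence \eqref{W'Ba} yields $\Za u=\kappa u$ and so $u\in\mathfrak P_{\alpha,\kappa}(\R^N)$ by Definition \ref{D:shBa}. Your extra care in passing from the vanishing of the surface integrals to the pointwise Euler identity (coarea formula, positivity of $\psi_\alpha$ off the null set $\mathcal M$, and continuity) is a detail the paper leaves implicit, but it is the same route.
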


\begin{proof}
From Corollary \ref{C:homo} or Remark \ref{R:sh}, we conclude that $\Za u = \kappa u$ in $\R^N$, and thus $u\in \mathfrak P_{\alpha,\kappa}(\R^N)$.

\end{proof}

Our next result represents a generalization to solutions of the operator $\Ba$ of a monotonicity theorem proved in \cite{GP} for solutions of the lower-dimensional obstacle problem for the standard $\Delta$. For the case $\kappa = 2$, and in connection with solutions of the classical obstacle problem for $\Delta$, this monotonicity theorem was first proved by Monneau in \cite{M}. We begin with some preliminary considerations. Suppose $\Ba u = 0$ in $\R^N$. According to Theorem \ref{T:almgrenBa} the frequency $N(u,\cdot)$ is monotone noncreasing, and therefore the limit
\[
N(u,0^+) = \underset{r\to 0^+}{\lim}\ N(u,r),
\]
exists. If $\kappa = N(u,0^+)$, then again by Theorem \ref{T:almgrenBa} we know that
\begin{equation}\label{Nk}
N(u,r) \ge \kappa,\ \ \ \ \ 0<r<R.
\end{equation}

\begin{thrm}\label{T:MBa}
Let $\Ba u = 0$ in $B_R$ and denote by $\kappa = N(u,0^+)$. Let $P_\kappa\in \Gamma^2_\alpha(B_R)$ be such that $\Ba P_\kappa = 0$ and $\Za P_\kappa = \kappa P_\kappa$ in $B_R$,  and consider the functional
\begin{equation}\label{M}
\mathcal M_\kappa(u,P_\kappa,r) = \frac{1}{r^{Q-1+2\kappa}} \int_{S_r} (u - P_\kappa)^2 \frac{\psi_\alpha}{|\nabla \ra|}dH_{N-1}.
\end{equation}
Then, 
\begin{equation}\label{M'}
\frac{d}{dr} \mathcal M_\kappa(u,P_\kappa,r) = \frac{2}{r} \mathcal W_\kappa(u,r),
\end{equation}
and therefore by \eqref{WkBa} and \eqref{Nk}, $r \to  \mathcal M_\kappa(u,P_\kappa,r)$ is non-decreasing in $(0,R)$.
\end{thrm}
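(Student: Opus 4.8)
The plan is to reduce everything to the height of the difference $v = u - P_\kappa$. Since $\Ba$ is linear and both $u$ and $P_\kappa$ solve $\Ba w = 0$ in $B_R$, the function $v$ is itself a solution, and by the definition \eqref{M} one has $\mathcal M_\kappa(u,P_\kappa,r) = r^{-(Q-1+2\kappa)} H(v,r)$, where $H(v,r) = \int_{S_r} v^2 \frac{\psi_\alpha}{|\nabla \ra|}\, dH_{N-1}$ is the height of $v$. First I would apply Lemma \ref{l:hprime} to $v$, obtaining $H'(v,r) = \frac{Q-1}{r} H(v,r) + 2 D(v,r)$, and then differentiate $r^{-(Q-1+2\kappa)} H(v,r)$ directly. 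Substituting $H'(v,r)$ and collecting terms collapses the expression to
\[
\frac{d}{dr} \mathcal M_\kappa(u,P_\kappa,r) = \frac{2}{r}\left[\frac{D(v,r)}{r^{Q-2+2\kappa}} - \frac{\kappa\, H(v,r)}{r^{Q-1+2\kappa}}\right] = \frac{2}{r}\, \mathcal W_\kappa(v,r).
\]

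It then remains to show that $\mathcal W_\kappa(v,r) = \mathcal W_\kappa(u,r)$. Expanding $D$ and $H$ by bilinearity in their function argument gives $\mathcal W_\kappa(v,r) = \mathcal W_\kappa(u,r) + \mathcal W_\kappa(P_\kappa,r) - 2\,\mathcal C(r)$, where the cross term is
\[
\mathcal C(r) = \frac{1}{r^{Q-2+2\kappa}} \int_{B_r} <\na u, \na P_\kappa>\, dz\,dt - \frac{\kappa}{r^{Q-1+2\kappa}} \int_{S_r} u\, P_\kappa \frac{\psi_\alpha}{|\nabla \ra|}\, dH_{N-1}.
\]
Because $P_\kappa$ is homogeneous of degree $\kappa$, formula \eqref{WkBa} (equivalently Corollary \ref{C:homo}) yields $\mathcal W_\kappa(P_\kappa,r) = 0$, so the statement reduces to proving $\mathcal C(r) = 0$.

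To handle $\mathcal C(r)$ I would use the first Green identity adapted to $\Ba = \operatorname{div}(A\nabla\,\cdot\,)$, the same integration by parts that underlies Proposition \ref{P:diri}. Since $<\na u, \na P_\kappa> = <A\nabla u, \nabla P_\kappa>$, integrating by parts over $B_r$ gives
\[
\int_{B_r} <\na u, \na P_\kappa>\, dz\,dt = \int_{S_r} u\, <\na P_\kappa, \na \ra> \frac{dH_{N-1}}{|\nabla \ra|} - \int_{B_r} u\, \Ba P_\kappa\, dz\,dt.
\]
The volume term vanishes since $\Ba P_\kappa = 0$, while on $S_r$ I would invoke \eqref{nara2} together with $\Za P_\kappa = \kappa P_\kappa$ and $\ra = r$ to write $<\na P_\kappa, \na \ra> = \frac{\Za P_\kappa}{\ra}\psi_\alpha = \frac{\kappa}{r} P_\kappa\, \psi_\alpha$. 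Substituting produces exactly $\frac{\kappa}{r}\int_{S_r} u\, P_\kappa \frac{\psi_\alpha}{|\nabla \ra|}\, dH_{N-1}$, hence $\mathcal C(r) = 0$, which proves \eqref{M'}. Monotonicity then follows at once: by \eqref{WkBa} and \eqref{Nk} the hypothesis $N(u,r)\ge \kappa$ forces $\mathcal W_\kappa(u,r)\ge 0$, so $\frac{d}{dr}\mathcal M_\kappa \ge 0$.

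The main obstacle is the rigorous justification of the integration by parts defining $\mathcal C(r)$: since $u, P_\kappa \in \Gamma^2_\alpha$ may be singular along $\mathcal M = \{0\}\times \R^k$, one must check that the boundary contribution near $\mathcal M$ is negligible, exactly as in the derivation of Proposition \ref{P:diri} and formula (2.30) in \cite{G}. Once that identity is in place, the remainder is routine bookkeeping with the first-variation lemmas already established for $\Ba$.
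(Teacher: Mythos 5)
Your proof is correct and takes essentially the same route as the paper's: both reduce $\mathcal M_\kappa$ to the rescaled height of $w = u - P_\kappa$, apply Lemma \ref{l:hprime} to $w$, and prove $\mathcal W_\kappa(u,\cdot) = \mathcal W_\kappa(w,\cdot)$ through the same integration by parts combined with \eqref{nara2}, $\Ba P_\kappa = 0$ and $\Za P_\kappa = \kappa P_\kappa$ (the paper's identity \eqref{keystep}); you merely carry out the differentiation step and the cross-term cancellation in the opposite order. The final monotonicity conclusion via \eqref{WkBa} and \eqref{Nk} is identical.
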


\begin{proof}
We begin by observing that, thanks to \eqref{Nk}, we have
\begin{equation}\label{M1}
N(u,r) \ge  \kappa,\ \ \ \ \ r>0.
\end{equation}
By formulas \eqref{M1} and \eqref{WkBa} we thus find
\begin{equation}\label{M2}
\mathcal W_\kappa(u,r) \ge 0,\ \ \ \ \ r>0.
\end{equation}
Therefore, the nondecreasing character of $r\to \mathcal M_\kappa(u,P_\kappa,r)$ will follow once we establish formula \eqref{M'}. We turn to this objective now.

Corollary \ref{C:homo}
 guarantees that $N(P_\kappa,r) \equiv \kappa$, and so, again by \eqref{WkBa}, we conclude that
\[
\mathcal W_\kappa(P_\kappa,r) \equiv 0.
\]
This observation allows to write, with $w = u - P_\kappa$,
\begin{align}\label{M3}
\mathcal W_\kappa(u,r) & = \mathcal W_\kappa(u,r) - \mathcal W_\kappa(P_\kappa,r) 
\\
& = \frac{1}{r^{Q-2+2\kappa}} \int_{B_r} (|\na w|^2 + 2 <\na w,\na P_\kappa>)dz dt
\notag\\
& - \frac{\kappa}{r^{Q-1+2\kappa}} \int_{S_r} (w^2 + 2 w P_\kappa) \frac{\psi_\alpha}{|\nabla \ra|}dH_{N-1}.
\notag\end{align}
Next, we integrate by parts in the term
\begin{align*}
\int_{B_r} 2 <\na w,\na P_\kappa> dzdt & = 2 \int_{S_r} w \sum_{j=1}^m <X_j,\nu> X_j P_\kappa dH_{N-1}
\\
& - 2 \int_{B_r} w \sum_{j=1}^m \text{div}(X_j P_\kappa\ X_j) dzdt
\\
& = 2 \int_{S_r} w <\na P_\kappa,\na \ra> \frac{dH_{N-1}}{|\nabla \ra|}  - 2 \int_{B_r} w \Ba P_\kappa dzdt
\\
& = 2 \int_{S_r} w <\na P_\kappa,\na \ra> \frac{dH_{N-1}}{|\nabla \ra|},
\end{align*}
since $\Ba P_k = 0$, and from \eqref{XBa} one has div$ X_j = 0$. We now use the crucial identity \eqref{nara2} in Lemma \ref{l:gradalpharho} and the hypothesis $\Za P_\kappa = \kappa P_\kappa$, to conclude that
\begin{equation}\label{keystep}
\int_{B_r} 2 <\na w,\na P_\kappa> dzdt = \frac{2\kappa}{r} \int_{S_r} w P_\kappa \frac{\psi_\alpha}{|\nabla \ra|}dH_{N-1}.
\end{equation}
We cannot emphasize enough the key role of the identity \eqref{keystep}. Substituting \eqref{keystep} in \eqref{M3}, and using the definition \eqref{WBa}
 of $\mathcal W_\kappa(w,r)$, we conclude that the following noteworthy identity holds
 \begin{equation}\label{WW}
 \mathcal W(u,r) = \mathcal W(w,r),\ \ \ \ \ 0<r<R.
 \end{equation}
We now observe that \eqref{M} implies
\begin{equation}\label{Hw}
\mathcal M_\kappa(u,P_\kappa,r) = \frac{1}{r^{Q-1+2\kappa}} \int_{S_r} w^2 \frac{\psi_\alpha}{|\nabla \ra|}dH_{N-1} = \frac{1}{r^{Q-1+2\kappa}} H(w,r).
\end{equation}
Since $\Ba w = 0$, Lemma \ref{l:hprime} gives
\[
H'(w,r) = \frac{Q-1}{r} H(w,r) + 2 D(w,r).
\]
Differentiating \eqref{Hw} and using the latter identity we thus find
\begin{align*}
\frac{d}{dr} \mathcal M_\kappa(u,P_\kappa,r) & = - \frac{Q-1+2\kappa}{r^{Q+2\kappa}} H(w,r)
 +  \frac{1}{r^{Q-1+2\kappa}}\left[\frac{Q-1}{r} H(w,r) + 2 D(w,r)\right]
 \\
 & = \frac 2r \mathcal W_\kappa(w,r) =  \frac 2r \mathcal W_\kappa(u,r),
\end{align*}
where in last equality we have used \eqref{WW}. We have established \eqref{M'}, thus completing the proof of the theorem.

\end{proof}

We close by remarking that Theorem \ref{T:MBa} has a counterpart for Carnot groups if one assumes that both the harmonic function $u$ and the homogeneous harmonic function $P_\kappa$ have vanishing discrepancy. We omit the relevant statement.

\end{document}